\newtheorem{theorem}{Theorem}[section]
\newtheorem{lemma}[theorem]{Lemma}
\newtheorem{proposition}[theorem]{Proposition}
\newtheorem{corollary}[theorem]{Corollary}
\newenvironment{proof}[1][Proof]{\begin{trivlist}
\item[\hskip \labelsep {\bfseries #1}]}{\end{trivlist}}
\newenvironment{definition}[1][Definition]{\begin{trivlist}
\item[\hskip \labelsep {\bfseries #1}]}{\end{trivlist}}
\newenvironment{remarks}[1][Remarks]{\begin{trivlist}
\item[\hskip \labelsep {\bfseries #1}]}{\end{trivlist}}
\newenvironment{remark}[1][Remark]{\begin{trivlist}
\item[\hskip \labelsep {\bfseries #1}]}{\end{trivlist}}
\newenvironment{conjecture}[1][Conjecture]{\begin{trivlist}
\item[\hskip \labelsep {\bfseries #1}]}{\end{trivlist}}
\newcommand{\qed}{\nobreak \ifvmode \relax \else
      \ifdim\lastskip<1.5em \hskip-\lastskip
      \hskip1.5em plus0em minus0.5em \fi \nobreak
      \vrule height0.75em width0.5em depth0.25em\fi}
 \DeclareFontFamily{U}{wncy}{}
    \DeclareFontShape{U}{wncy}{m}{n}{<->wncyr10}{}
    \DeclareSymbolFont{mcy}{U}{wncy}{m}{n}
    \DeclareMathSymbol{\Sh}{\mathord}{mcy}{"58}
\date{}
  \title{On the existence of special elements in odd $K$-theory groups in finite abelian extensions of imaginary quadratic fields}
  \author{Jilali Assim$^1$ and Saad El Boukhari$^2$}
\begin{document}
\maketitle
\date{}
\begin{abstract}
Let $k$ be an imaginary quadratic number field, and $F/k$  a finite abelian extension of  Galois group $G$.
We investigate the relationship between the conjectural special elements introduced in \cite{Burns-DeJeu-Gangl} and ETNC in the semi-simple case. This provides a partial proof of the conjecture for $F/k$ under certain conditions.
\end{abstract}

\noindent\textit{2010 Mathematics Subject Classification: } 11G40, 19F27, 11R70.\\
\noindent\textit{Key words and phrases: } number field, algebraic $K$-theory, regulator, Artin $L$-function, Equivariant Tamagawa Number Conjecture.

\section{Introduction}
Let $K$ be a number field and $X$  a smooth projective variety over $K$. Let $i$ and $j$ denote integers in $\mathbb{Z}$. We consider the motive $M=h^{i}(X)(j)$. For practical purposes, one needs only to understand the realisations of $M$:
\begin{itemize}
\item For any rational prime $p$, the étale realisation $M_p:=H^{i}_{ét}(X\times_{K}\overline{K}, \mathbb{Q}_p)(j)$ which is a continuous representation of the Galois group $\mathrm{Gal}(\overline{K}/K)$.
\item The Betti realisation $M_B:=H^{i}(X(\mathbb{C}),\mathbb{Q})(j)$ which is a finite dimentional space and carries an action of complex conjugation.
\item The deRham realisation $M_{dR}:=H^{i}_{dR}(X/\mathbb{Q})(j)$ which is a finite dimensional filtered space.
\item We also attach to $M$ the motivic cohomology spaces $H^{0}_{\mathcal{M}}(M)$ and $H^{1}_{\mathcal{M}}(M)$ which are (conjecturally) finite dimentional.
\end{itemize}
For each such motive $M$, there is a dual motive $M^{\vee}$ with dual realisations. The period isomorphism $M_{B}\otimes\mathbb{C}\cong M_{dR}\otimes\mathbb{C}$ induces a map
$$\alpha_{M}: M_{B}^{+}\otimes\mathbb{R}\rightarrow M_{dR}/\mathrm{Fil}^{0}M_{dR}\otimes \mathbb{R},$$
and it is conjecured that we have the following
\begin{conjecture}{(c.f. \cite{Fontaine})}
There is an exact sequence
\begin{align*}
0\rightarrow H^{0}_{\mathcal{M}}(M)\otimes\mathbb{R}\rightarrow \mathrm{ker}(\alpha_{M})\stackrel{\rho_b^{\vee}}{\rightarrow} H^{1}_{\mathcal{M}}(M^{\vee}(1))^{\vee}\otimes \mathbb{R}&\rightarrow \\ H^{1}_{\mathcal{M}}(M)\otimes\mathbb{R}&\stackrel{\rho_b}{\rightarrow}\mathrm{coker}(\alpha_{M})\rightarrow H^{0}_{\mathcal{M}}(M^{\vee}(1))^{\vee}\otimes \mathbb{R}\rightarrow 0
\end{align*}
where $\rho_b$ is the Beilinson regulator. 
\end{conjecture}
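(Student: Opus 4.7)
The plan is to recognise this statement as the Fontaine--Perrin-Riou reformulation of Beilinson's conjecture on special values of $L$-functions, so a proof in full generality is out of reach. I would therefore split the task into (a) a formal dictionary identifying the terms of the sequence with real Deligne cohomology, and (b) the genuine deep input, namely Beilinson's conjecture on the regulator. Throughout, I would keep in mind the specific motives $h^{0}(\mathrm{Spec}\,F)(1-n)$ attached to the abelian extension $F/k$, since that is the case the paper ultimately needs.

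First I would set up the dictionary. The short exact sequence
$$0\to M_{B}^{+}\otimes\mathbb{R}\xrightarrow{\alpha_{M}}(M_{dR}/\mathrm{Fil}^{0}M_{dR})\otimes\mathbb{R}\to H^{1}_{\mathcal{D}}(M_{\mathbb{R}})\to 0$$
defining real Deligne cohomology identifies $\mathrm{coker}(\alpha_{M})$ with $H^{1}_{\mathcal{D}}(M_{\mathbb{R}})$ and $\ker(\alpha_{M})$ with $H^{0}_{\mathcal{D}}(M_{\mathbb{R}})$. After this identification $\rho_{b}$ is the Beilinson regulator from motivic to Deligne cohomology, and $\rho_{b}^{\vee}$ is obtained from the regulator for the dual motive $M^{\vee}(1)$ by applying Poincaré duality and $\mathbb{R}$-linear duality.

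Next I would establish exactness at the four outer positions. Exactness at $H^{0}_{\mathcal{M}}(M)\otimes\mathbb{R}$ and at $H^{0}_{\mathcal{M}}(M^{\vee}(1))^{\vee}\otimes\mathbb{R}$ reduces, via the cycle-class interpretation of $H^{0}_{\mathcal{M}}$, to the injectivity of the regulator on algebraic cycles (and its dual statement), which is classical. For exactness at the positions $\ker(\alpha_{M})$ and $\mathrm{coker}(\alpha_{M})$, I would use the compatibility between $\rho_{b}$ for $M$ and $\rho_{b}$ for $M^{\vee}(1)$ under Poincaré duality, which forces the image of $\rho_{b}^{\vee}$ to coincide with the kernel of the map into $H^{1}_{\mathcal{M}}(M)_{\mathbb{R}}$, up to the contribution coming from $H^{0}_{\mathcal{M}}(M)_{\mathbb{R}}$ already recorded in the first map.

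The hard part, and essentially the full content of the conjecture, is exactness at the two $H^{1}$-terms: this is Beilinson's conjecture that $\rho_{b}$ induces a rational isomorphism between $H^{1}_{\mathcal{M}}(M)_{\mathbb{R}}$ and a prescribed subspace of Deligne cohomology whose codimension is dictated by $H^{0}_{\mathcal{M}}(M^{\vee}(1))^{\vee}$. Proving this in general is beyond current methods and will be the main obstacle. A concrete plan is therefore to verify it case by case --- Tate motives over number fields via Borel's theorem, CM and modular motives through Beilinson--Deninger--Scholl elements --- and then transplant these results to the motives $h^{0}(\mathrm{Spec}\,F)(1-n)$ relevant for the paper, where the semi-simple ETNC framework and the induction from characters of $G$ should supply the structural input needed to assemble the exact sequence.
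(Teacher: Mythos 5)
There is nothing in the paper to compare your attempt against: this statement is a \emph{conjecture}, quoted from Fontaine's Bourbaki expos\'e, and the paper neither proves it nor claims to. It appears only as background motivation in the introduction; the only instance the paper actually uses is the Tate motive $M=E(r)_F$ with $r<0$, where $H^0_{\mathcal M}(M)=H^1_{\mathcal M}(M)=0$, $\ker(\alpha_M)=0$, and the whole sequence collapses to the single isomorphism $K_{1-2r}(F)\otimes\mathbb{R}\xrightarrow{\sim}Y_r(F)^+\otimes\mathbb{R}$ induced by the Beilinson regulator --- which is Borel's theorem, not a new result of the paper. So your instinct to treat the general statement as unprovable and to specialise to $h^0(\mathrm{Spec}\,F)(r)$ is exactly the posture the paper itself takes, even though the paper never spells this out as a ``proof.''

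Two caveats on your write-up. First, the displayed sequence you use to define real Deligne cohomology is written as if $\alpha_M$ were injective, which contradicts your subsequent identification of $\ker(\alpha_M)$ with $H^0_{\mathcal D}(M_{\mathbb R})$; the correct statement is the four-term exact sequence $0\to H^0_{\mathcal D}(M_{\mathbb R})\to M_B^+\otimes\mathbb{R}\to (M_{dR}/\mathrm{Fil}^0)\otimes\mathbb{R}\to H^1_{\mathcal D}(M_{\mathbb R})\to 0$. Second, your claim that exactness at the two outer terms is ``classical'' via injectivity of the cycle class map is an overstatement: nondegeneracy of the regulator on $H^0_{\mathcal M}$ is itself part of the conjectural package for general motives, and only becomes unconditional in special cases such as the Artin motives relevant here. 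With those corrections your proposal is an accurate description of what is known and what is open, but it is a road map rather than a proof, and no proof is expected --- by you or by the authors.
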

Motivic cohomology is conjecturally closely related to the special values of the $L$-function $L(M, s)$ associated with the motive $M$, and defined for $\mathrm{Re}(s)$ large enough, by the following product over rational primes $\ell$
$$L(M, s):=\prod_{\ell}\mathrm{det}(1-\mathrm{Fr}_\ell^{-1}T\mid M_p^{I_\ell})$$
where $Fr_\ell$ is the Frobenius element. In the most basic example, when $M= h^{0}(\mathrm{Spec}(K))(0)$, $H^{1}_{f}(M^{\vee}(1))\simeq O_{K}^{\times}\otimes \mathbb{Q}$ and $M_{B}\otimes \mathbb{R}\simeq \bigoplus_{\sigma: K\rightarrow \mathbb{C}} \mathbb{R}$.  The Beilinson regulator is in this case the Dirichlet regulator $R^{D}_F$ defined as follows
\begin{align*}
R^{D}_{K}: O_{K}^{\times}\otimes \mathbb{R} &\rightarrow 
\bigoplus_{\sigma: F\rightarrow \mathbb{C}}\mathbb{R}\\
u &\mapsto \Sigma_{\sigma: F\rightarrow \mathbb{C}}
(\mathrm{log} \mid\sigma(u)\mid )\sigma
\end{align*}
where $\mid\sigma(u)\mid:=(\sigma(u)\overline{\sigma(u)})^{1/2}$. Suppose now that $K$ is abelian and let $G:=\mathrm{Gal}(K/\mathbb{Q})$. The $L$-function $L(M,s)$ is a $\mathbb{C}[G]$-valued function, and can be identified with
$$L(M, s)=(L(\chi, s))_{\chi\in \mathrm{Hom}(G, \mathbb{C})},$$
where, $L(\chi, s)$ is the usual Dirichlet $L$-function associated with the character $\chi$. If we denote by $L^{*}(\chi, 0)$ the special value of the $L$-function at $s=0$, then
$$L^{*}(\chi, 0)\in \mathbb{R}^{\times}$$
We can make the regulator $R^{D}_{K}$ map into $\mathbb{R}$ by slightly changing its definition: We can redefine $R^{D}_{K}$ over $(O_{K}^{\times})^{r_1+r_2}$, where $r_1$ (resp. $r_2$) is the number of real embeddings (resp. pairs of complex conjugate embeddings) of $K$, as the determinant in $\mathbb{R}$ of the real numbers $(\mathrm{log}(\mid \sigma_{i}(a_j)\mid))_{1\leq i,j\leq r_1+r_2}$, where $(a_j)_{1\leq j\leq r_1+r_2}\in (O_{K}^{\times})^{r_1+r_2}$.\\
The question is then whether or not the special value $L^{*}(\chi, 0)$ lies in the image of $(O_{K}^{\times})^{r_1+r_2}$ by $R^{D}_{K}$. The answer comes obviously, in this case, with the definition of cyclotomic units (e.g. \cite{washington}, Chap. 8).\\
Suppose now that $F/k$ is a finite abelian extension of number fields with Galois group $G$, and  $M=h^{0}(\mathrm{Spec}(F))(j)$ (The motive is over $k$), with $j=r<0$. The map $\rho_b$ is in this case the usual $K$-theory Beilinson regulator
$$\rho_b=\rho^{r}_{F}: K_{1-2r}(F)\otimes\mathbb{R}\rightarrow (\prod_{\sigma: F\rightarrow \mathbb{C}}(2\pi i)^{-r}\mathbb{R})^{+}.$$
and the motivic $L$-function is identified with
$$L(M, s)=(L(\chi, s+r))_{\chi\in\mathrm{Hom}(G,\mathbb{C})},$$
where $L(\chi, s)$ is the Artin $L$-function.
Since  $L^{*}(\chi, r)\in\mathbb{R}^{\times}$, we can ask again if one can map a certain element in $K_{1-2r}(F)$ to a certain image involving $L^{*}(\chi, r)$ by the regulator $\rho^{r}_{F}$.\\
In the case where $k=\mathbb{Q}$, and $F=\mathbb{Q}(\zeta_{N})$ ($N>0$) and $\zeta_{N}=e^{2\pi i/N}$, construction of  such element $\ell(w)$ in $K_{1-2r}(F)\otimes \mathbb{Q}$ has been suggested by Bloch (for $r=-1$) and Beilinson($r\leq -1$). For a nontrivial $N$th root of unity $w$, the image by Beilinson regulator of the  element $\ell(w)\in K_{1-2r}(F)\otimes \mathbb{Q}$ is computed in \cite{beilinson1} and is given in $\oplus_{\sigma:F\rightarrow\mathbb{C}}(2\pi i)^{-r}\mathbb{R}$ by 
$$\rho^{r}_{F}(\ell(w))=(\mathrm{Li}_{1-r}(\sigma(w)))_{\sigma:F\rightarrow\mathbb{C}}$$
where for any complex number $s$ which verifies $\mid s\mid <1$, $\mathrm{Li}_{1-r}(s)$ is the polylogarithm function defined as
$$\mathrm{Li}_{1-r}(s)=\Sigma_{i=1}^{\infty}\frac{s^{i}}{i^{1-r}}.$$
which can be analytically extended to $\mathbb{C}-[1,\infty[$. If the first derivative $L^{'}(\chi, r)$ at $s=r$  is non zero, one can constuct an element in $K_{1-2r}(F)\otimes \mathbb{Z}[\chi]$ which maps to $L^{'}(\chi, r)$ by $\rho^{r}_{F}$, using Gross's formula (c.f. \cite{Burns-DeJeu-Gangl}, Proof of Thm. 3.1).\\
In \cite{Burns-DeJeu-Gangl}, the authors conjectured the existence of such elements for a wider choice of base number fields $k$. Unfortunately, the only available evidence for this conjecture is provided for $k=\mathbb{Q}$.\\
In the following work, we propose a proof of the conjecture when the base field is imaginary quadratic given that some conditions are fulfilled. Our argument is based on the Equivariant Tamagawa Number Conjecture, which is a generalization of both the analytic number formula, and the conjecutre of Birch and Swinnerton-Dyer. We adopt the formulation of Fontaine and Perrin-Riou (\cite{Fontaine}, \cite{fperrin-riou}) which generalizes to motives with coefficients in an algebra other than $\mathbb{Q}$. In this formulation, the ETNC links $K$-theory groups to motivic $L$-functions through isomorphisms which implicitly involve the Beilinson regulator. This, along with the recent proof of the ETNC over an imaginary quadratic number field for strictly negative integers given in \cite{Leung}, will enable us to prove the following results:
\begin{theorem}{(Thm. \ref{theorem4.4})}
Let $F/k$ be an abelian extension over the imaginary quadratic number field $k$. Suppose that $E$ is a number field containing all values of characters $\chi$ of $G:=\mathrm{Gal}(F/k)$ and that $r$ is a stricty negative integer. We make the identification $e_{\chi}E\cong E$.\\
Then the following statements are equivalent
\begin{itemize}
\item[1]Conjecture \ref{conjecture4.3.1}.1 holds for the pair $(E(r)_{F}, E[G])$.
\item[2]For each character $\chi$ of $G$ there exists an element $\tilde{\epsilon}_{\chi}(F)\in e_{\chi}(K_{1-2r}(F)\otimes E)$ which verifies
$$\rho_{F}^{r}(\tilde{\epsilon}_{\chi}(F)) = L^{'}(r,\chi^{-1})$$
where the Beilinson Regulator $\rho_{F}^{r}$ is defined here over $K_{1-2r}(F)\otimes E$ by extension of scalars.
\end{itemize}
\end{theorem}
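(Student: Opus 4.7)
The strategy is to exploit the semi-simple structure of $E[G]$ in order to reduce both statements to a character-by-character comparison, and then to identify the $\chi$-component of Conjecture \ref{conjecture4.3.1}.1 with the prescribed value $L^{'}(r,\chi^{-1})$ of the Beilinson regulator $\rho_{F}^{r}$.

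First, since $E$ contains all values of the characters of $G$, Maschke's theorem yields the splitting $E[G]\cong \prod_{\chi} E$ through the primitive idempotents $e_{\chi}=\frac{1}{|G|}\sum_{g\in G}\chi(g^{-1})g$. This decomposition is inherited by every $E[G]$-module attached to the motive $E(r)_{F}$, in particular by $K_{1-2r}(F)\otimes E$, and it is compatible with $\rho_{F}^{r}$, which therefore restricts to a map on each $\chi$-isotypic component. On the analytic side, the equivariant $L$-function factors as $L(E(r)_{F},s)=(L(\chi,s+r))_{\chi}$, and the leading term at $s=0$ entering Conjecture \ref{conjecture4.3.1}.1 specialises on the $\chi$-component to $L^{'}(r,\chi^{-1})$; the inversion of $\chi$ is forced by the way the idempotent $e_{\chi}$ pairs with the $G$-action on the realisations of the dual motive.

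The next step is to unwind Conjecture \ref{conjecture4.3.1}.1 itself. In the Fontaine--Perrin-Riou formulation adopted in the paper, it asserts the existence of a generator $\epsilon$ of a free rank one $E[G]$-module (the fundamental line) whose image under a canonical regulator trivialisation equals the leading term of the equivariant motivic $L$-function. Over the product of fields $E[G]=\prod_{\chi}E$, giving such a generator is equivalent to giving, for each $\chi$, a non-zero element in the $\chi$-component whose image under the corresponding componentwise trivialisation is the prescribed scalar. Hence, from a generator $\epsilon$ provided by (1) one obtains the required elements by setting $\tilde{\epsilon}_{\chi}(F):=e_{\chi}\epsilon$, and conversely any family $(\tilde{\epsilon}_{\chi}(F))_{\chi}$ as in (2) reassembles into a generator of the fundamental line realising (1).

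The main obstacle lies in the previous step: one must compare, after extending scalars to $E$ and projecting onto $e_{\chi}E\cong E$, the abstract regulator trivialisation appearing in the Fontaine--Perrin-Riou formulation with the Beilinson regulator $\rho_{F}^{r}$ of the introduction, and check that the equivariant leading term is indeed turned into $L^{'}(r,\chi^{-1})$ with the correct sign and normalisation. This amounts to a careful functorial translation between the determinant-module language used to state ETNC and the concrete $K$-theory/regulator picture. Once this dictionary is in place, the equivalence follows from the elementary remark that, over a product of fields, a section of a free rank one module is a generator if and only if each of its components is a generator with the prescribed scalar value.
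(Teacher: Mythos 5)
Your overall strategy coincides with the paper's: decompose everything over the primitive idempotents $e_{\chi}$ of the semi-simple algebra $E[G]$, identify the $\chi$-component of the leading term with $L^{'}(r,\chi^{-1})$, and unwind the trivialisation $\vartheta^{r}_{F,\infty}$ into a statement about the image of the Beilinson regulator on $e_{\chi}(K_{1-2r}(F)\otimes E)$. However, the step you label as ``the main obstacle'' is exactly the content of the theorem, and you defer it (``once this dictionary is in place \dots'') instead of carrying it out; as written the argument is therefore incomplete. Concretely, what is missing is the following chain, all of which uses the hypothesis that $k$ is imaginary quadratic --- a hypothesis your proposal never invokes. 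First, $Y_{r}(F)^{+}\otimes E\cong E[G]^{\#}$ canonically (the analogue of Lemma \ref{lemma3.1}), so the Betti factor $\mathrm{Det}_{E[G]}(Y_{r}(F)^{+}\otimes E)^{\#}$ of the fundamental line collapses to $E[G]$ and $\Xi(E(r)_{F})$ reduces to $\mathrm{Det}^{-1}_{E[G]}(K_{1-2r}(F)\otimes E)\otimes E[G]$. Second, $L^{*}(E(r)_{F},0)=L^{'}(E(r)_{F},0)$ and each $e_{\chi}(K_{1-2r}(F)\otimes E)$ is one-dimensional over $E$, because the regulator gives $(K_{1-2r}(F)\otimes E)\otimes\mathbb{R}\simeq E[G]\otimes\mathbb{R}$; without this rank computation the ``free rank one'' reduction you invoke is not justified. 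Third, one must actually invert the containment: the $\chi$-component of Conjecture \ref{conjecture4.3.1}.1 reads $L^{'}(r,\chi^{-1})^{-1}E\supseteq(\rho^{r}_{F})_{triv}\bigl(\mathrm{Hom}_{E}(e_{\chi}(K_{1-2r}(F)\otimes E),E)\otimes E\bigr)$, and it is only after computing $(\rho^{r}_{F})_{triv}$ on this dual line that one obtains the equivalent statement $L^{'}(r,\chi^{-1})E\subseteq\rho^{r}_{F}(e_{\chi}(K_{1-2r}(F)\otimes E))$, which is statement (2).

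A related imprecision: you propose to produce the elements by setting $\tilde{\epsilon}_{\chi}(F):=e_{\chi}\epsilon$ for a generator $\epsilon$ of the fundamental line. But $\epsilon$ lives in $\mathrm{Det}^{-1}_{E[G]}(K_{1-2r}(F)\otimes E)^{\#}\otimes\mathrm{Det}_{E[G]}(Y_{r}(F)^{+}\otimes E)^{\#}$, not in $K_{1-2r}(F)\otimes E$, so $e_{\chi}\epsilon$ is an element of the wrong module. Passing from a generator of the (dual) determinant line to an element of $e_{\chi}(K_{1-2r}(F)\otimes E)$ with prescribed regulator image is precisely the dualisation/trivialisation step above, and it needs the one-dimensionality of the $\chi$-eigenspace to make sense. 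I would therefore regard the proposal as a correct plan with the decisive computation omitted rather than as a complete proof.
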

\begin{theorem}{(Thm. \ref{theorem4.8})}
Let $F/k$ be a finite abelian extension of number fields with $k$ imaginary quadratic. Suppose that $p$ is a rational prime which does not divide $\# G$, that $E$ is a number field which contains all values of characters of $G$ and let $r$ denote a strictly negative integer. Let $S$ be a finite set of places of $k$ containing the infinite places, the $p$-places and the places which ramify in $F/k$ and $S_f$ the subset of finite places of $S$. Then the following statements are equivalent
\begin{itemize}
\item Conjecture \ref{conjecture4.3.4}.4 holds for the pair $(E(r)_{F}, O_{p}[G])$.
\item For all $p$-adic characters $\chi$ of $G$ one has
\small{
\begin{align*}
&\rho_{F}^{r}(e_{\chi}(K_{1-2r}(F)_{/tors}\otimes O_{p}))=\\ &\prod_{v\in S_{f}}(1-Nv^{-r}\chi^{-1}(v)).\mathrm{Fitt}_{O_{p}}(e_{\chi}(H^{0}(F, \mathbb{Q}_{p}/\mathbb{Z}_{p}(1-r))\otimes O_{p}))\mathrm{Fitt}^{-1}_{O_{p}}(e_{\chi}(K_{-2r}(O_{F, S})\otimes O_{p}))L^{'}(r,\chi^{-1})
\end{align*}
}
\end{itemize}
where $O_{F, S}$ refers to the ring of $S$-units of $F$, and $\rho_{F}^{r}$ denotes the extension of scalars of the Beilinson regulator over $O_p$. We also identified $e_{\chi}O_{p}[G]=e_{\chi}O_{p}$ with $O_{p}$.
\end{theorem}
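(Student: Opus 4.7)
The plan is to mimic, over $O_p$, the argument used to establish Theorem \ref{theorem4.4} in the $E$-setting, exploiting the hypothesis $p\nmid \#G$ to keep the semisimple formalism available integrally. Because $p\nmid \#G$, the group ring $O_p[G]$ is a finite product of (unramified) extensions of $O_p$, the idempotents $e_\chi$ are defined in $E_p[G]$ and map integral lattices to integral lattices, and Conjecture \ref{conjecture4.3.4}.4 for the pair $(E(r)_F,O_p[G])$ decomposes into one statement per $p$-adic character $\chi$ of $G$, under the identification $e_\chi O_p[G]=e_\chi O_p\cong O_p$. The first step is therefore to write out this $\chi$-component explicitly: it expresses that a certain trivialisation of the determinant of a perfect complex $R\Gamma_c(O_{F,S},\mathbb{Z}_p(1-r))$ agrees, after tensoring with $O_p$, with the $\chi$-component of the equivariant $L$-value.

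Next I would identify the motivic content of this complex in $K$-theoretic terms. Using the Quillen--Lichtenbaum comparison (now a theorem after Voevodsky--Rost) together with the localisation sequence for $O_{F,S}$, the cohomology of $R\Gamma(O_{F,S},\mathbb{Z}_p(1-r))$ is, up to the contribution of the fixed module $H^0(F,\mathbb{Q}_p/\mathbb{Z}_p(1-r))$, the pair $(K_{1-2r}(F)\otimes O_p,\,K_{-2r}(O_{F,S})\otimes O_p)$ in degrees one and two. In degree one one further splits off the torsion, so that the free part contributes the lattice $e_\chi(K_{1-2r}(F)_{/\mathrm{tors}}\otimes O_p)$ whose image under $\rho_F^r$ appears on the left-hand side of the desired equality. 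On the finite modules in degree two and on the torsion in degree one one uses that, since $p\nmid \#G$, the characteristic ideal and the $O_p$-Fitting ideal coincide on every $\chi$-eigenspace; this is where the two Fitting ideals in the statement come from.

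On the analytic side, the $\chi$-component of the $S$-truncated equivariant $L$-value at $s=r$ is $\prod_{v\in S_f}\bigl(1-Nv^{-r}\chi^{-1}(v)\bigr)\,L^*(r,\chi^{-1})$; because $r<0$, the Artin $L$-function vanishes to order one at $s=r$ for every nontrivial type of $\chi$ that contributes, so $L^*(r,\chi^{-1})=L'(r,\chi^{-1})$ and this factor matches the one written in the theorem. Combining the cohomological description above with this expression for the $\chi$-component of the $L$-value, and using Leung's proof of the ETNC over imaginary quadratic fields at negative integers to pass between the two formulations of Conjecture \ref{conjecture4.3.4}.4, one obtains the equivalence in both directions: the $p$-adic ETNC statement becomes exactly the assertion that $\rho_F^r$ maps $e_\chi(K_{1-2r}(F)_{/\mathrm{tors}}\otimes O_p)$ onto the product of the Euler factors, the two Fitting ideals and $L'(r,\chi^{-1})$.

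The main obstacle will be the bookkeeping in the second step: tracking the precise lattices at the $p$-adic and ramified places (which sit simultaneously in $S_f$ and in the support of the Fitting factors), making sure that the passage from the determinant of the $S$-truncated complex to the Euler product at $S_f$ introduces no spurious units, and checking that the Bloch--Kato / Chern class identifications used to rewrite étale cohomology as $K$-theory are compatible with the $G$-action in such a way that idempotent projection commutes with all the maps involved. Once these identifications are made $\chi$-equivariantly, the equivalence of the two statements is formal, as in the proof of Theorem \ref{theorem4.4}.
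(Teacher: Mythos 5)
Your overall strategy is the same as the paper's: use $p\nmid\#G$ to decompose Conjecture \ref{conjecture4.3.4}.4 into $\chi$-components, identify the \'etale cohomology of $R\Gamma(O_{k,S},T_p^{\vee}(1))^{\vee}$ with $K_{1-2r}(F)\otimes O_p$ and $K_{-2r}(O_{F,S})\otimes O_p$ via the (Voevodsky) Chern class maps, split off the torsion $H^{0}(F,\mathbb{Q}_p/\mathbb{Z}_p(1-r))\otimes O_p$, convert determinants of finite modules into Fitting ideals over the product of discrete valuation rings $O_p$, and let $\vartheta^{r}_{F,\infty}$ act by $\rho^{r}_{F}$ on the free part. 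Two points, however, are genuine gaps rather than bookkeeping.

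First, you invoke ``Leung's proof of the ETNC over imaginary quadratic fields at negative integers to pass between the two formulations of Conjecture \ref{conjecture4.3.4}.4.'' That cannot appear in this proof: the theorem is an \emph{unconditional equivalence} between the conjecture and the explicit regulator identity, and it must hold whether or not the ETNC is true; \cite{Leung} is only used later (Corollary \ref{corollary4.9}) to deduce the identity from the equivalence. What is actually needed to change the reference set of places is the compatibility of the conjecture with enlarging $S_p$ to $S$ (the distinguished triangle $R\Gamma_{c}(O_{k,S},M_p)\rightarrow R\Gamma_{c}(O_{k,S_p},M_p)\rightarrow\oplus_{v\in S_f\setminus S_p}R\Gamma_f(k_v,M_p)$ of \cite{LeungKings}, Lemma 2.3), together with Proposition \ref{proposition4.7}. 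Second, and relatedly, you attribute the Euler product $\prod_{v\in S_f}(1-Nv^{-r}\chi^{-1}(v))$ to the $S$-truncation of the $L$-function; but the conjecture is stated with the \emph{untruncated} value $L^{*}(E(r)_F,0)=L^{*}_{S_\infty}(E(r)_F,0)$, so these factors do not come from the analytic side at all. They arise because the acyclic local complexes $R\Gamma_f(k_v,M_p)$ for $v\in S_f$ admit two different trivialisations of their determinants (the map $\mathrm{Id}_{V_v,triv}$ versus the identification coming from acyclicity), and the discrepancy is exactly $\varepsilon_{S_p}(r)\cdot\varepsilon_{S_f\setminus S_p}(r)=\prod_{v\in S_f}(1-Nv^{-r}\mathrm{Frob}_v e_{I_v})$, via the relations $\tilde{\vartheta}^{r}_{F,S_p}=\varepsilon_{S_p}(r)\vartheta^{r}_{F,S_p}$ and $\mathcal{r}=\varepsilon_{S_f\setminus S_p}(r)\tilde{\mathcal{r}}$ from \cite{burnsFlach}. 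You flag this as ``the main obstacle'' but do not resolve it; it is precisely the content of Proposition \ref{proposition4.7} and the first half of the paper's proof, and without it the Euler factors in your final formula have no justified source.
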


\section{The conjecture}
\subsection{Beilinson regulator}
Let $r$ denote a negetive integer.\\
The Beilinson regulator defined over the $K$-group of the field of complex numbers is a map 
$$\rho^{r}_{\mathbb{C}}: K_{1-2r}(\mathbb{C})\rightarrow H^{1}_{D}(\mathrm{Spec}(\mathbb{C}),\mathbb{R}(1-r))\cong (2\pi i)^{-r}\mathbb{R}$$
where $H^{1}_{D}()$ is the first group of Deligne's Cohomology.\\
For a number field $F$ we compose with the map $$K_{1-2r}(F) \rightarrow \prod_{\sigma: F\rightarrow \mathbb{C}}K_{1-2r}(\mathbb{C}),$$
and obtain 
$$\rho^{r}_{F}: K_{1-2r}(F)\rightarrow \prod_{\sigma: F\rightarrow \mathbb{C}}(2\pi i)^{-r}\mathbb{R}\simeq X_{F}\otimes (2\pi i)^{-r}\mathbb{R},$$
where $X_{F}:=\mathbb{Z}[\Sigma_{F}]$ and $\Sigma_{F}$ is the set of embeddings of $F$ in $\mathbb{C}$. Let $\tau$ denote the complex conjugation automorphism $\in \mathrm{G}(\mathbb{C}/\mathbb{R})$. \\
The image of the Beilinson regulator over $K_{2n-1}(F)$ is invariant under the  action of $\tau$, hence we have 
$$\rho^{r}_{F}: K_{1-2r}(F)\rightarrow (\prod_{\sigma: F\rightarrow \mathbb{C}}(2\pi i)^{-r}\mathbb{R})^{+}.$$
Moreover, the image of $K_{1-2r}(F)$ by $\rho^{r}_{F}$ is a complete lattice of the $\mathbb{R}$-vector space $(\prod_{\sigma: F\rightarrow \mathbb{C}}(2\pi i)^{-r}\mathbb{R})^{+}$ 

\subsection{Burns-de Jeu-Gangl conjectural special elements for an abelian extension of number fields $F/k$}
Let $F/k$ be a finite abelian extension of number fields and $G=\mathrm{Gal}(F/k)$. Let $\chi$ be an irreducible (one dimensional) complex character of $G$, and $S$ be a finite set of places of $k$ containing the set of infinite places $S_\infty$.\\

We regard the set $\Sigma_F$ of embeddings $F\rightarrow \mathbb{C}$ as a left $G\times \mathrm{Gal}(\mathbb{C}/\mathbb{R})$-module by setting  $(g\times w)(\sigma)= w\circ \sigma\circ g^{-1}$, for all $g\in G$, $w\in \mathrm{Gal}(\mathbb{C}/\mathbb{R})$ and $\sigma\in \Sigma_F$.

Let $\mathbb{Z}[\chi]$ denote the ring of values of the character $\chi$, and suppose that $r$ is a strictly negative integer. We write
$$\rho^{r}_{F, \chi}: \mathbb{Z}[\chi]\otimes K_{1-2r}(F)\rightarrow \mathbb{Z}[\chi]\otimes (\prod_{\sigma :F\rightarrow\mathbb{C}}(2\pi i)^{-r}\mathbb{R})^{+}$$
for the map induced from $\rho^{r}_{F}$ by extension of scalars over $\mathbb{Z}[\chi]$.
We denote by $L_{S}^{'}(r,\chi)$ the first derivative of the $S$-truncated Artin $L$-function $L_{S}(s,\chi)$ of $\chi$ at $r$.
\begin{conjecture}[Conjecture \ref{conjecture2.1}]\label{conjecture2.1}(\cite{Burns-DeJeu-Gangl}, Conjecture $1.2$)
Assume that $r<0$ and that $L_{S}^{'}(r,\chi^{-1})\not = 0$. Then  for each $\sigma\in \Sigma_F$ there exists an element $\epsilon_{\sigma}(\chi, S)\in \mathbb{Z}[\chi]\otimes K_{1-2r}(F)$ such that
$$(2\pi i)^{r}\rho^{r}_{F, \chi}(\epsilon_{\sigma}(\chi, S))=w_{1-r}(F^{\mathrm{ker}(\chi)})L_{S}^{'}(r,\chi^{-1})(c^{\chi}_{\sigma,\sigma^{'}})_{\sigma^{'} : F\rightarrow \mathbb{C}}$$
where $F^{\mathrm{ker}}$ is the fixed field of $\chi$, $w_{1-r}(F^{\mathrm{ker}(\chi)}):=\mid H^{0}(\mathrm{Gal}(\overline{\mathbb{Q}}/F^{\mathrm{ker}(\chi)}), \mathbb{Q}/\mathbb{Z}(1-r))\mid$, and if we write  $\tau_{\sigma}$ for the generator of the decomposition subgroup of $G$ of the place of $F$ that corresponds to $\sigma$, the elements $c^{\chi}_{\sigma,\sigma^{'}}$ are given for each $\sigma^{'}\in \Sigma_F$ by
$$ c^{\chi}_{\sigma,\sigma^{'}} =  \left\{
\begin{array}{rl}
\chi^{-1}(g)+(-1)^{-r}\chi^{-1}(g\tau_{\sigma}) \;\;\;\;\;\;if\;\sigma(k)\subset \mathbb{R}\; and \; \sigma^{'}\circ g =\sigma\;for\;some\;g\in G, \\
\chi^{-1}(g)\;\;\;\;\;\;if\;\sigma(k)\not\subseteq \mathbb{R}\; and \; \sigma^{'}\circ g =\sigma\;for\;some\;g\in G, \\
(-1)^{-r}\chi^{-1}(g)\;\;\;if\;\sigma(k)\not\subseteq \mathbb{R}\; and \; \sigma^{'}\circ\tau\circ g =\sigma\;for\;some\;g\in G, \\
0\;\;\;\;\;\;\;\;\;\;\;\;\;\;\;\;\;\;\;\;\;\;\;\;\;\;\;\;\;\;\;\;\;\;\;\;\;\;\;\;\;\;\;\;\;\;\;\;\;\;\;\;\;\;\;\;\;\;\;\;\;\;\;\;\;\;\;\;\;\;\;\;\;\;\;\;otherwise.
\end{array}
\right.$$
\end{conjecture}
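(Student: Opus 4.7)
The strategy is to derive the existence of the special elements $\epsilon_\sigma(\chi,S)$ from the Equivariant Tamagawa Number Conjecture (ETNC) applied to the motive $M=h^{0}(\mathrm{Spec}(F))(r)$ viewed as a motive with coefficients in the group algebra of $G$. Since the statement is character-by-character, I would first extend scalars to a number field $E$ splitting every character of $G$, so that $E[G]\cong\prod_{\chi}E$ via the idempotents $e_\chi$, and the $K$-theoretic data together with the Beilinson regulator decomposes accordingly along the $\chi$-isotypic components. This brings the conjecture into a form amenable to the ETNC-style comparison announced in Theorems 4.4 and 4.8 of the introduction.

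Working on the $\chi$-component, I would invoke the Fontaine--Perrin-Riou formulation of the ETNC to produce a canonical trivialization of the determinant of the compactly supported étale cohomology of $M$ in terms of the leading term $L_S^{*}(r,\chi^{-1})$, with the Beilinson regulator entering through the Betti--de Rham comparison over $\mathbb{R}$. Under the identifications $H^{1}_{\mathcal{M}}(M)\otimes\mathbb{Q}\simeq K_{1-2r}(F)\otimes\mathbb{Q}$ and $H^{0}_{\mathcal{M}}(M^{\vee}(1))^{\vee}\otimes\mathbb{Q}\simeq X_F\otimes\mathbb{Q}$, the $\chi$-part of the resulting zeta element yields an element $\tilde\epsilon_\chi\in e_\chi(K_{1-2r}(F)\otimes E)$ whose image under $\rho^{r}_F$ equals $L'(r,\chi^{-1})$ up to the torsion factor $w_{1-r}(F^{\mathrm{ker}(\chi)})$, which arises as the order of the finite group $H^{0}(\mathrm{Gal}(\overline{\mathbb{Q}}/F^{\mathrm{ker}(\chi)}),\mathbb{Q}/\mathbb{Z}(1-r))$ appearing in the exact triangle comparing motivic and étale cohomology. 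The combinatorial coefficients $c^{\chi}_{\sigma,\sigma'}$ then emerge from an explicit unwinding of the $\chi$-isotypic projection of the canonical basis of $X_F=\mathbb{Z}[\Sigma_F]$ together with the $\tau$-invariance imposed by the sign $(-1)^{-r}$ coming from the Tate twist; the four cases in the statement correspond exactly to whether $\sigma(k)\subset\mathbb{R}$ and whether $\sigma'$ lies in the $G$-orbit or the $\tau G$-orbit of $\sigma$.

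The main obstacle is that the ETNC is not known in the required generality. My plan is therefore to restrict to $k$ imaginary quadratic, where one can invoke Leung's recent proof of the ETNC for abelian extensions of $k$ at strictly negative integers, and use this as the substitute for the otherwise missing input. Once it is available, the passage from the $E[G]$-statement to the $\mathbb{Z}[\chi]$-version demanded by Conjecture \ref{conjecture2.1} is essentially a descent along $\mathbb{Z}[\chi]\hookrightarrow E$, and is routine because $E[G]$ is a product of fields; the delicate point is bookkeeping, namely matching the sign conventions of the period isomorphism, the weight factor $w_{1-r}(F^{\mathrm{ker}(\chi)})$, and the transition between the $S$-truncated $L_S(s,\chi)$ and the full Artin $L(s,\chi)$, so that the right-hand side of the conjecture appears on the nose.
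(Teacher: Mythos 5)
The statement you were given is Conjecture \ref{conjecture2.1} itself, and the paper does not prove it: it is an open conjecture of Burns--de Jeu--Gangl, known unconditionally only for $k=\mathbb{Q}$, and the entire point of the paper is to extract a \emph{partial} result toward it when $k$ is imaginary quadratic. Your proposal correctly reconstructs the paper's strategy (decompose along the idempotents $e_{\chi}$ of $E[G]$, compare the fundamental line with the determinant of compactly supported cohomology via $\vartheta^{r}_{F,S_p}$ and $\vartheta^{r}_{F,\infty}$, and feed in Johnson-Leung's proof of the ETNC over an imaginary quadratic base), but it overstates what this yields, and the overstatements are exactly where the gaps are.

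First, the conjecture is stated for an arbitrary finite abelian extension $F/k$, so restricting to $k$ imaginary quadratic cannot prove it; and even there the ETNC input is not unconditional, since Johnson-Leung's result is invoked only for primes $p$ that are split and prime to $6\#G$. Via the $p$-adic reformulation (Conjecture \ref{conjecture3.1}) and Proposition \ref{proposition3.4}, this produces $\epsilon_{\sigma}(\chi,S)$ with coefficients in $O_p$ only for those $p$, hence in $O[1/m]$ where $m$ collects the excluded primes --- not in $\mathbb{Z}[\chi]$ as the conjecture demands. Second, your claim that the descent from the $E[G]$-statement to the $\mathbb{Z}[\chi]$-version is routine because $E[G]$ is a product of fields is not substantiated: the paper's own Proposition \ref{proposition3.3} only descends from $\mathcal{O}$ to $\mathcal{O}^{+}[\chi]$ by comparing coordinates in two lattices, and no mechanism is given for reaching $\mathbb{Z}[\chi]$. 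Third, the rationality statement (Theorem \ref{theorem4.4}) only produces $\tilde{\epsilon}_{\chi}(F)\in e_{\chi}(K_{1-2r}(F)\otimes E)$ with regulator image $L^{'}(r,\chi^{-1})$; to get the integral statement with the factor $w_{1-r}(F^{\mathrm{ker}(\chi)})$ one needs the full integral conjecture (Conjecture \ref{conjecture4.3.4}.4), the Fitting-ideal computation of Theorem \ref{theorem4.8}, and the divisibility $\mathrm{Fitt}_{O_p}(e_{\chi}(H^{0}(F,\mathbb{Q}_{p}/\mathbb{Z}_{p}(1-r))\otimes O_{p}))\mid w_{1-r}(F^{\mathrm{ker}(\chi)})O_p$ used in Corollary \ref{corollary4.10}, where the candidate element must be corrected by an auxiliary factor $b\in O_p$ before the right-hand side appears on the nose. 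In short, your outline is the paper's outline, but what it actually establishes is Conjecture \ref{conjecture3.1} for restricted sets of data, not Conjecture \ref{conjecture2.1}.
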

\begin{remarks}
\begin{itemize}
\item[1] By \cite{Burns-DeJeu-Gangl}, we know that Conjecture \ref{conjecture2.1} holds when $k=\mathbb{Q}$.
\item[2] If we write $\overline{\epsilon}_{\sigma}(\chi, S)$ to denote the image of  $\epsilon_{\sigma}(\chi, S)$ by the surjetive map 
$$K_{1-2r}(F)\otimes \mathbb{Z}[\chi] \rightarrow {(K_{1-2r}(F))}_{/tors}\otimes \mathbb{Z}[\chi]$$
where ${(K_{1-2r}(F))}_{/tors}$ is the torsion-free quotient of $K_{1-2r}(F)$ (i.e. the quotient of $K_{1-2r}(F)$ by its $\mathbb{Z}$-torsion submodule),
then, since $\rho^{r}_{F}$ is injective over ${(K_{1-2r}(F))}_{/tors}$ and $\mathbb{Z}[\chi]$ is a flat $\mathbb{Z}$-module, the element $\overline{\epsilon}_{\sigma}(\chi, S)$ is unique in ${(K_{1-2r}(F))}_{/tors}\otimes \mathbb{Z}[\chi]$.
\end{itemize}
\end{remarks}
\section{Case of an imaginary quadratic base number field}
In this section, we assume that the base field $k$ is imaginary quadratic. Recall that we suppose $r$ to be a strictly negative integer.

\noindent In this case $L_{S}(r, \chi)=0$  and $L_{S}^{'}(r, \chi)\not =0$  and the elements $c^{\chi}_{\sigma,\sigma^{'}}$ are given for each $\sigma^{'}\in \Sigma_F$ by
$$ c^{\chi}_{\sigma,\sigma^{'}} =  \left\{
\begin{array}{rl}

\chi^{-1}(g)\;\;\;\;\;\;if\; \sigma^{'}\circ g =\sigma\;for\;some\;g\in G, \\
(-1)^{-r}\chi^{-1}(g)\;\;\;if \; \sigma^{'}\circ\tau\circ g =\sigma\;for\;some\;g\in G.
\end{array}
\right.$$
We also have the following
\begin{lemma}\label{lemma3.1}
Let the number field $k$ be imaginary quadratic. Then there is a canonical isomorphism of $\mathbb{Z}[G]$-modules
$$ \iota :(\prod_{\sigma: F\rightarrow \mathbb{C}}(2\pi i)^{-r}\mathbb{R})^{+}\cong \mathbb{R}[G]$$
\end{lemma}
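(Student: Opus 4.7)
The key observation is that since $k$ is imaginary quadratic, every embedding $k\hookrightarrow\mathbb{C}$ is complex, so the restriction map $\Sigma_F\to\Sigma_k$ is a $G$-equivariant surjection onto the two-element set $\Sigma_k=\{\sigma_0,\tau\circ\sigma_0\}$. After fixing one embedding $\sigma_0$ of $k$, the two fibres $\Sigma_F^+$ and $\Sigma_F^-$ are disjoint $G$-torsors, and complex conjugation swaps them. This yields a $G$-equivariant decomposition $\Sigma_F=\Sigma_F^+\sqcup\Sigma_F^-$ and hence a $\mathbb{Z}[G]$-module decomposition
$$\prod_{\sigma:F\to\mathbb{C}}(2\pi i)^{-r}\mathbb{R}\;\cong\;\Bigl(\prod_{\sigma\in\Sigma_F^+}(2\pi i)^{-r}\mathbb{R}\Bigr)\oplus\Bigl(\prod_{\sigma\in\Sigma_F^-}(2\pi i)^{-r}\mathbb{R}\Bigr).$$

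Next I would compute the $\tau$-action on the twist: since $\overline{(2\pi i)^{-r}}=(-1)^{-r}(2\pi i)^{-r}$, the diagonal action of $\tau$ sends $(x_\sigma)_\sigma$ to $((-1)^{-r}x_{\tau\circ\sigma})_\sigma$. Thus an element lies in the $+$-part precisely when $x_{\tau\circ\sigma}=(-1)^{r}x_\sigma$ for every $\sigma$; in particular the coordinates indexed by $\Sigma_F^-$ are determined by those indexed by $\Sigma_F^+$. Hence projection onto the $\Sigma_F^+$-factor gives an $\mathbb{R}$-linear (and $G$-equivariant) isomorphism
$$\Bigl(\prod_{\sigma:F\to\mathbb{C}}(2\pi i)^{-r}\mathbb{R}\Bigr)^{+}\;\xrightarrow{\;\sim\;}\;\prod_{\sigma\in\Sigma_F^+}(2\pi i)^{-r}\mathbb{R}.$$

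To conclude, I would fix an extension $\widetilde{\sigma}_0\in\Sigma_F^+$ of $\sigma_0$. Using the convention $g\cdot\sigma=\sigma\circ g^{-1}$ from the excerpt, the map $g\mapsto\widetilde{\sigma}_0\circ g^{-1}$ is a $G$-equivariant bijection $G\xrightarrow{\sim}\Sigma_F^+$, and dividing by $(2\pi i)^{-r}$ trivializes the Tate twist. Composing these identifications yields the desired isomorphism $\iota:(\prod_\sigma(2\pi i)^{-r}\mathbb{R})^{+}\xrightarrow{\sim}\mathbb{R}[G]$. One then verifies directly that $\iota$ intertwines the $G$-actions: $(h\cdot x)_\sigma=x_{\sigma\circ h}$ on the left becomes left multiplication by $h$ on $\mathbb{R}[G]$.

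The construction is essentially forced once the geometric picture is clear; the only real bookkeeping issue — which I would flag as the most error-prone step — is tracking the sign $(-1)^{r}$ arising from conjugation on the Tate twist. That sign does not enter the final identification because it is exactly absorbed by the swap of $\Sigma_F^+$ and $\Sigma_F^-$ on the $+$-invariants. As for the word \emph{canonical}, the isomorphism depends only on the choice of an embedding $\widetilde{\sigma}_0$ extending $\sigma_0$; this is the standard convention once an embedding of the imaginary quadratic base field $k$ into $\mathbb{C}$ has been fixed, so no genuine choice is involved beyond that.
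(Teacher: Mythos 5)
Your proposal is correct and follows essentially the same route as the paper's proof: the paper also splits $\Sigma_F$ into the two $G$-torsors of embeddings lying over the two conjugate embeddings of $k$, notes that the $+$-invariants are determined by the coordinates on one torsor (with the sign $a_\sigma=\pm a_{\tau\sigma}$ coming from conjugation on $(2\pi i)^{-r}$), and sends the basis element at $\hat\sigma\leftrightarrow\sigma\in G$ to $a_{\hat\sigma}\sigma^{-1}$. Your write-up merely makes explicit the equivariance check and the sign bookkeeping that the paper leaves implicit.
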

\begin{proof}
If $k$ is imaginary quadratic, then for each embedding $\hat{\sigma}\in \mathrm{Hom}(F, \mathbb{C})$, either $\hat{\sigma}$ or $\tau\hat{\sigma}$ identifies with an automorphism $\sigma\in G$.\\
The isomorphism $\iota$ is explicitly given by mapping,  for each such embedding $\hat{\sigma}\in \mathrm{Hom}(F, \mathbb{C})$, the element $(0,..,(2\pi i)^{-r}a_{\hat{\sigma}},..,(2\pi i)^{-r}a_{\tau\hat{\sigma}},..,0)$ (with $a_{\sigma}=\pm a_{\tau\sigma})$ to $a_{\hat{\sigma}}\sigma^{-1}$ if $\hat{\sigma}=\sigma$ and to $a_{\tau\hat{\sigma}}\sigma^{-1}$ if $\tau\hat{\sigma}=\sigma$.\hspace*{\fill}$\Box$
\end{proof}
Let $E$ be the  field extension generated over $\mathbb{Q}$ by the values of all characters $\chi\in \mathrm{Hom}(G,\mathbb{C})$ and $O$ be its ring of integers.\\
We extend the Beilinson regulator to the following by composing with the isomorphism $\iota$ and tensoring with $O$ 
$$\rho^{r}_{F}: K_{1-2r}(F)\otimes O \longrightarrow \mathbb{R}[G]\otimes O.$$
Explicitly if $a\in K_{1-2r}(F)$ and $x \in O$
$$\rho^{r}_{F}(a\otimes x) =\iota(\rho^{r}_{F}(a))\otimes x.$$
\begin{remark}
Let $d_{1-r}:=\mathrm{rk}_{\mathbb{Z}}(K_{1-2r}(F))$.
Since the image of $K_{1-2r}(F)$ by the Beilinson regulator is a $\mathbb{Z}$-lattice of $\mathbb{Z}$-rank $d_{1-r}$, and $O$ is a flat $\mathbb{Z}$ module, the image of $K_{1-2r}(F)\otimes O$ by $\rho^{r}_{F}$ is a free $O$-module of rank $d_{1-r}$ over $O$.
\end{remark}
We need the following proposition
\begin{proposition}\label{proposition3.2}
Suppose that the base field $k$ is imaginary quadratic and $\sigma\in G$. Let $S$ be any finite set of places of $k$ containing the set of infinite places. The image of the  element $\epsilon_{\sigma}(\chi, S)\in K_{1-2r}(F)\otimes O$ (where $\sigma$ is also viewed as an emebedding in $\Sigma_{F}$) by $\rho^{r}_{F}$ is given by
$$ \rho^{r}_{F}(\epsilon_{\sigma}(\chi, S)) =  
w_{1-r}(F^{\mathrm{ker}(\chi)})L_{S}^{'}(r,\chi^{-1})\chi^{-1}(\sigma)\mid{G}\mid e_{\chi},
$$
where $e_{\chi} := \frac{1}{\mid{G}\mid}\Sigma_{g\in G}\chi^{-1}(g)g$.
\end{proposition}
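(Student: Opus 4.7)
The plan is to read off the value of $\rho^r_{F,\chi}(\epsilon_\sigma(\chi,S))$ from the defining formula in Conjecture \ref{conjecture2.1}, push it through the explicit isomorphism $\iota$ of Lemma \ref{lemma3.1}, and then recognize the resulting element of $\mathbb{R}[G]\otimes O$ as the claimed multiple of $e_\chi$ by a reindexing in the group sum.

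To set up the computation I would fix a reference embedding $\iota_0\colon F \hookrightarrow \mathbb{C}$ and use that $k$ is imaginary quadratic to realize $G$ as one of the two $G$-orbits in $\Sigma_F$ via $g\mapsto \iota_0\circ g$, so that $\Sigma_F = \iota_0 G \sqcup \tau\iota_0 G$; this matches the identification underlying Lemma \ref{lemma3.1}. Writing $\sigma = \iota_0\sigma_1$ with $\sigma_1 \in G$, the two-case formula for $c^\chi_{\sigma,\sigma'}$ recalled at the opening of this section yields, for every $g_1\in G$,
$$c^\chi_{\sigma,\iota_0 g_1} = \chi^{-1}(g_1^{-1}\sigma_1), \qquad c^\chi_{\sigma,\tau\iota_0 g_1} = (-1)^{-r}\chi^{-1}(g_1^{-1}\sigma_1).$$
These two values differ by $(-1)^{-r}$, which is exactly the action of complex conjugation on $(2\pi i)^{-r}$, so $(c^\chi_{\sigma,\sigma'})_{\sigma'}$ indeed lies in $(\prod_{\sigma'} (2\pi i)^{-r}\mathbb{R})^+$, as is required for Conjecture \ref{conjecture2.1} to make sense.

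Next I would apply $\iota$. By Conjecture \ref{conjecture2.1} one has $\rho^r_{F,\chi}(\epsilon_\sigma(\chi,S)) = (2\pi i)^{-r} w_{1-r}(F^{\mathrm{ker}(\chi)}) L_S'(r,\chi^{-1})(c^\chi_{\sigma,\sigma'})_{\sigma'}$ in $(\prod (2\pi i)^{-r}\mathbb{R})^+\otimes O$, and the new regulator $\rho^r_F$ is its composition with $\iota$. Using the explicit formula for $\iota$ from Lemma \ref{lemma3.1}, the piece supported on the conjugate pair $\{\iota_0 g_1,\tau\iota_0 g_1\}$ contributes $w_{1-r}(F^{\mathrm{ker}(\chi)})L_S'(r,\chi^{-1})\chi^{-1}(g_1^{-1}\sigma_1) g_1^{-1}$. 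Summing over $g_1\in G$ and reindexing $g := g_1^{-1}$ gives
$$\rho^r_F(\epsilon_\sigma(\chi,S)) = w_{1-r}(F^{\mathrm{ker}(\chi)})L_S'(r,\chi^{-1})\chi^{-1}(\sigma_1)\sum_{g\in G}\chi^{-1}(g)\,g,$$
and the inner sum equals $|G|e_\chi$ by definition of $e_\chi$, which is the desired identity.

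The computation is essentially a direct substitution; the main point requiring care is that the identification of $G$ with a subset of $\Sigma_F$ used above agrees with the one implicit in Lemma \ref{lemma3.1}, and that one correctly tracks the inversion $\chi^{-1}(g)\leftrightarrow \chi^{-1}(g^{-1})$ coming from the left-versus-right action convention on $\Sigma_F$.
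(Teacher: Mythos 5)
Your proposal is correct and follows essentially the same route as the paper's proof: substitute the defining formula of Conjecture \ref{conjecture2.1} (specialised to the imaginary quadratic case), push the resulting tuple through the explicit isomorphism $\iota$ of Lemma \ref{lemma3.1}, and reindex the group sum to recognise $\chi^{-1}(\sigma)\lvert G\rvert e_{\chi}$. The only difference is cosmetic: the paper splits into the cases $r$ even and $r$ odd, whereas you treat both parities at once by observing that the sign $(-1)^{-r}$ on the $\tau$-conjugate components is exactly the condition for membership in the $+$-part and is discarded by $\iota$, which is a slightly cleaner bookkeeping of the same computation.
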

\begin{proof}
Since $k\not\subseteq \mathbb{R}$ Conjecture \ref{conjecture2.1} states that  
$$\rho^{r}_{F}(\epsilon_{\sigma}(\chi, S))= (2\pi i)^{-r}w_{1-r}(F^{\mathrm{ker}(\chi)})L_{S}^{'}(r,\chi^{-1})((-1)^{a}\chi^{-1}(h{\sigma^{'}}^{-1}\sigma))_{\sigma^{'}: F\rightarrow\mathbb{C}}$$
where $a=0$ and $h=\mathrm{Id}$ if $\sigma^{'}$ identifies with an automorphism of $G$ and $a=-r$ and $h=\tau$ otherwise.\\
If $r$ is even, $(-1)^{a}=1$ and the extension of scalars of the isomorphism $\iota$ over $O$  sends the element $((2\pi i)^{-r}\chi^{-1}(h{\sigma^{'}}^{-1}))_{\sigma^{'}: F\rightarrow\mathbb{C}}$ to $\Sigma_{\sigma\in G}\chi^{-1}({\sigma}^{-1}){\sigma}^{-1}=\mid G\mid e_{\chi}$.\\
The case of $r$ odd is proved in a similar way.
\hspace*{\fill}$\Box$
\end{proof}
Let $p$ be a rational prime, and let $O_p:=O\otimes \mathbb{Z}_p$. We want to formulate a $p$-adic analog of conjecture \ref{conjecture2.1}.\\
First, the isomorphism $\iota$ in Lemma \ref{lemma3.1}, is also an isomorphism of $\mathbb{R}$-vector spaces. Since the image by the Beilinson regulator of $K_{1-2r}(F)$ is a complete $\mathbb{Z}$-lattice in the $\mathbb{R}$-vector space $(\prod_{\sigma:F\rightarrow\mathbb{C}}(2\pi i)^{-r}\mathbb{R})^{+}$, it is also  
  a complete $\mathbb{Z}$-lattice $\mathcal{T}$ in $\mathbb{R}[G]$ once we compose with the isomorphism $\iota$. We can, then,  tensor $\mathcal{T}$ by the ring $O_p$, and define a $p$-adic regulator by extension of scalars over $O_p$
$$\rho^{r}_{F, p}: K_{1-2r}(F)\otimes_{\mathbb{Z}} O_p\rightarrow \mathcal{T}\otimes_{\mathbb{Z}} O_p$$
Note that $\mathcal{T}\otimes_{\mathbb{Z}} O_p\simeq O_p^{\mathrm{rk}_{\mathbb{Z}}K_{1-2r}(F)}=O_p^{\mid G\mid}$.\\

By Proposition \ref{proposition3.2}, we can reformulate conjecture \ref{conjecture2.1} for the extension $F/k$ as follows
\begin{conjecture}[Conjecture \ref{conjecture3.1} ($p$-adic reformulation of conjecture \ref{conjecture2.1})]\label{conjecture3.1}
Let $k$ be an imaginary quadratic number field, and $F/k$ a finite abelian extension.
Suppose that $r$ is a strictly negative integer  and that $p$ is a rational prime. Let $S$ be a finite set of places of $k$ containing the set of infinite places.\\
Then  for each $\sigma\in G:=\mathrm{Gal}(F/k)$ and each one dimensional character $\chi$ of $G$ the following holds
\begin{itemize}
\item[1] The element $w_{1-r}(F^{\mathrm{ker}(\chi)})L_{S}^{'}(r,\chi^{-1})\chi^{-1}(\sigma)\mid{G}\mid e_{\chi}$ of the $\mathbb{R}$-vector $\mathbb{R}[G]=\mathcal{T}\otimes \mathbb{R}$ belongs to the $E$-vector space $\mathcal{T}\otimes E$ (recall that $E$ is the field generated over $\mathbb{Q}$ by all values of characters of $G$).
\item[2] Suppose that the previous condition is fulfilled. Then, there exists an element $\epsilon_{\sigma}(\chi, S, p)\in K_{1-2r}(F)\otimes O_p$ such that
$$ \rho^{r}_{F, p}(\epsilon_{\sigma}(\chi, S, p)) =  
w_{1-r}(F^{\mathrm{ker}(\chi)})L_{S}^{'}(r,\chi^{-1})\chi^{-1}(\sigma)\mid{G}\mid e_{\chi},
$$
where $e_{\chi} := \frac{1}{\mid{G}\mid}\Sigma_{g\in G}\chi^{-1}(g)g$.
\end{itemize}
\end{conjecture}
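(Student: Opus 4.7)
The plan is to deduce Conjecture \ref{conjecture3.1} from the Equivariant Tamagawa Number Conjecture (ETNC) for the motive $h^{0}(\mathrm{Spec}\,F)(r)$ with $O_{p}[G]$-coefficients, in the Fontaine--Perrin-Riou formulation. Since Leung has established this ETNC for abelian extensions of imaginary quadratic fields at strictly negative integers, the task reduces to matching the class produced by ETNC with the explicit target in Part 2.

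For Part 1, recall that $\mathcal{T}$ is by definition (via $\iota$) the image of $K_{1-2r}(F)$ under $\rho^{r}_{F}$, so $\mathcal{T}\otimes E$ is the $E$-span in $\mathbb{R}[G]$ of the image of $K_{1-2r}(F)\otimes O$. Proposition \ref{proposition3.2} shows that the target element equals $\rho^{r}_{F}(\epsilon_{\sigma}(\chi,S))$ as soon as Conjecture \ref{conjecture2.1} is assumed, so Part 1 would follow immediately in that case. To reach it unconditionally, I would invoke the rationality statement embedded in ETNC, which forces the $\chi$-isotypic component of $L^{*}_{S}(r,\chi^{-1})$, weighted by the Tamagawa factor $w_{1-r}(F^{\mathrm{ker}(\chi)})$, to lie in the $E$-span of the regulator lattice.

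For Part 2, since $\rho^{r}_{F,p}=\rho^{r}_{F}\otimes_{\mathbb{Z}}O_{p}$ is obtained by extension of scalars from a map with image $\mathcal{T}$, its image is the free $O_{p}$-module $\mathcal{T}\otimes O_{p}$. I would feed the equivariant determinant isomorphism from ETNC into this framework: ETNC identifies the $\chi$-component of the regulator determinant with $L_{S}^{*}(r,\chi^{-1})^{-1}$, up to an explicit correction involving $w_{1-r}(F^{\mathrm{ker}(\chi)})$, the Euler factors at the $S$-primes, and $|G|e_{\chi}$. Reading this identification backwards through $\rho^{r}_{F,p}$ produces the desired class $\epsilon_{\sigma}(\chi,S,p)\in K_{1-2r}(F)\otimes O_{p}$, provided the target lies in $\mathcal{T}\otimes O_{p}$, which is the $O_{p}$-integral refinement of Part 1.

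The main obstacle will be bookkeeping the correction factors, especially when $p$ divides $|G|$. The factor $w_{1-r}(F^{\mathrm{ker}(\chi)})=|H^{0}(F^{\mathrm{ker}(\chi)},\mathbb{Q}/\mathbb{Z}(1-r))|$ has to be matched with the torsion contribution appearing in the ETNC; the Euler factors at primes in $S\setminus S_{\infty}$ must cancel in a precise way coming from the difference between $L_{S}$ and $L_{S_{\infty}}$; and the normalising $|G|$ in $|G|e_{\chi}$ must emerge from the trace-type projection onto $\chi$-isotypic components. When $p\nmid|G|$ the ring $O_{p}[G]$ decomposes into a product of local rings indexed by $p$-adic characters, $e_{\chi}\in O_{p}[G]$, and the $\chi$-component of ETNC yields the assertion directly; when $p\mid|G|$, however, $e_{\chi}\notin O_{p}[G]$ and the pointwise identity of Part 2 must be repackaged (as in Theorem \ref{theorem4.8}) as an identity of Fitting ideals.
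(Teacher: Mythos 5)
The statement you are proving is labelled as a \emph{conjecture} in the paper, and the paper does not prove it in general: it only establishes it for the sets of data $(F/k,r,S,O_p)$ with $p$ split in $k$ and $p\nmid 6\#G$ (Corollary \ref{corollary4.10}), by feeding Leung's theorem on the ETNC into Theorem \ref{theorem4.8} and Corollary \ref{corollary4.9}. Your strategy --- deduce the statement from the ETNC for $E(r)_F$ with $O_p[G]$-coefficients --- is exactly the paper's route to those partial results, but as written it overclaims in two places. First, the ETNC over an imaginary quadratic base is \emph{not} available for all rational primes $p$; the input from \cite{Leung} used in the paper requires $p$ odd, split, and prime to $6\#G$ (plus a further hypothesis in loc.\ cit.). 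So your argument can at best reproduce Corollary \ref{corollary4.10}, not the conjecture for every $p$ as the statement demands. Second, when $p\mid\#G$ you correctly observe that $e_\chi\notin O_p[G]$, but ``repackaging as an identity of Fitting ideals'' does not produce the element $\epsilon_\sigma(\chi,S,p)$ with the exact equality required in Part 2; this case is genuinely open and is excluded from all of the paper's integral results (Theorem \ref{theorem4.8} assumes $p\nmid\#G$).

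There is also a quantitative gap in your treatment of the normalising factors. Even granting the ETNC, Theorem \ref{theorem4.8} only yields that $\rho^r_{F,p}(e_\chi(K_{1-2r}(F)_{/tors}\otimes O_p))$ equals an explicit fractional ideal involving $\mathrm{Fitt}_{O_p}(e_\chi(H^0(F,\mathbb{Q}_p/\mathbb{Z}_p(1-r))\otimes O_p))$ and $\mathrm{Fitt}^{-1}_{O_p}(e_\chi(K_{-2r}(O_{F,S})\otimes O_p))$. To convert this into the existence of an element mapping \emph{exactly} to $w_{1-r}(F^{\ker\chi})L'_S(r,\chi^{-1})\chi^{-1}(\sigma)\lvert G\rvert e_\chi$, the paper's Corollary \ref{corollary4.10} uses the divisibility $\mathrm{Fitt}_{O_p}(e_\chi(H^0(F,\mathbb{Q}_p/\mathbb{Z}_p(1-r))\otimes O_p))\mid w_{1-r}(F^{\ker\chi})O_p$, writes $w_{1-r}(F^{\ker\chi})=ab$ with $\mathrm{Fitt}=aO_p$, and multiplies the ETNC-produced class by $b$; it also needs that the Euler factors at the $p$-places are $p$-adic units. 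Your proposal names these factors as ``bookkeeping'' but does not carry out this step, and without it you only obtain membership of the target in an ideal, not the required identity.
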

\begin{remarks}
\item[1] Since $\rho^{r}_{F, p}(K_{1-2r}(F)\otimes O_p)=\mathcal{T}\otimes O_p$,
the second statement in conjecture \ref{conjecture3.1} is equivalent to the assumption that
$$w_{1-r}(F^{\mathrm{ker}(\chi)})L_{S}^{'}(r,\chi^{-1})\chi^{-1}(\sigma)\mid{G}\mid e_{\chi}\in \mathcal{T}\otimes O_p$$
\item[2] To prove that Conjecture \ref{conjecture3.1} holds for an abelian extension $F/k$ and a character $\chi$ of $G$, it suffices to prove the existence of the element $\epsilon_{\sigma}(\chi, S, p)$ for one given choice of automorphism $\sigma\in G$. In fact
if $\sigma^{'}\in G$ is another automorphism, then by Proposition 3.2 one has (modulo tosion)
$$\epsilon_{\sigma^{'}}(\chi, S, p)=\chi^{-1}(\sigma^{'}\sigma^{-1})\epsilon_{\sigma}(\chi, S, p).$$
\item[3]  Suppose that $p$ doesn't divide $\mid G\mid$.
The image $\overline{\epsilon}_{\sigma}(\chi, S, p)$ of the element $\epsilon_{\sigma}(\chi, S, p)$ in ${(K_{1-2r}(F))}_{/tors}\otimes O_p$ lies  in $e_\chi( {(K_{1-2r}(F))}_{/tors}\otimes O_p)$.\\This is because $\rho^{r}_{F, p}$ is injective over $(K_{1-2r}(F))_{/tors}\otimes O_p$ (since $O_p$ is a flat $\mathbb{Z}$module), and
\begin{align*}
\rho^{r}_{F, p}(\epsilon_{\sigma}(\chi, S, p))&=e_{\chi}\rho^{r}_{F, p}(\epsilon_{\sigma}(\chi, S, p))\\
&=\rho^{r}_{F, p}(e_{\chi}\epsilon_{\sigma}(\chi, S, p))
\end{align*}
\end{remarks}

Suppose, as before, that $r<0$, and that $S$ is a finite set of places of the totally imaginary quadratic field $k$ containing the infinite places.  Fix a character $\chi$ of $G$. Let $\mathcal{E}$ be a number field which contains all values of $\chi$, and $\mathcal{O}$ the ring of integers of $\mathcal{E}$. We will say that Conjecture \ref{conjecture2.1} holds for the set of data $(F/k, r, S, \chi, \mathcal{O})$, if the elements $\epsilon_{\sigma}(\chi, S)$ of Conjecture \ref{conjecture2.1} exist within $K_{1-2r}\otimes \mathcal{O}$ for all $\sigma\in G$. For example, if Conjecture \ref{conjecture2.1} holds exactly as stated above for all $\sigma\in G$, we will say that it holds for the set of data $(F/k, r, S, \chi, \mathbb{Z}[\chi])$.

If $\mathcal{E}$ contains values of all characters of $G$, and if Conjecture \ref{conjecture2.1} holds for all sets of data $(F/k, r, S, \chi, \mathcal{O})$ for all characters $\chi$ of $G$, then we simply say that Conjecture 2.1 holds for the set $(F/k, r, S, \mathcal{O})$.\\
Similarly, we will say that conjecture \ref{conjecture3.1} holds for the set of data $(F/k, r, S, \mathcal{O}_p)$, if it does hold for the  rational prime $p$ and  all characters $\chi$ of $G$.

\begin{proposition}\label{proposition3.3}
If conjecture \ref{conjecture2.1} holds for the set of data $(F/k, r, S, \chi, \mathcal{O})$, then it holds for $(F/k, r, S, \chi, \mathcal{O}^{+}[\chi])$, where $\mathcal{O}^{+}$ is the ring of integers of the maximal real subfield $\mathcal{E}^{+}$ of $\mathcal{E}$. 
\end{proposition}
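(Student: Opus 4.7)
By the remark following Conjecture \ref{conjecture3.1} (whose argument uses only Proposition \ref{proposition3.2} and hence applies verbatim to Conjecture \ref{conjecture2.1} in this imaginary quadratic setting), it suffices to construct a single element $\widetilde{\epsilon}_{1}(\chi, S) \in K_{1-2r}(F) \otimes \mathcal{O}^{+}[\chi]$ satisfying the regulator identity for $\sigma = 1$. The remaining elements are then obtained by multiplication with $\chi^{-1}(\sigma) \in \mathbb{Z}[\chi] \subseteq \mathcal{O}^{+}[\chi]$, which keeps us inside $K_{1-2r}(F) \otimes \mathcal{O}^{+}[\chi]$ (modulo torsion).

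Applying Proposition \ref{proposition3.2} to the given $\epsilon_{1}(\chi, S) \in K_{1-2r}(F) \otimes \mathcal{O}$ shows that its regulator value equals $L_{S}^{\prime}(r, \chi^{-1}) \cdot \eta$ where
\[
\eta := w_{1-r}(F^{\mathrm{ker}(\chi)}) \sum_{g \in G} \chi^{-1}(g)\, g \in \mathbb{Z}[\chi][G] \subseteq \mathcal{O}^{+}[\chi][G].
\]
The crucial observation is that this factorization isolates the ``$\mathcal{O}$-integrality'' of the regulator value inside $\mathcal{O}^{+}[\chi]$: the (typically transcendental) scalar $L_{S}^{\prime}(r, \chi^{-1})$ interacts with the integral structure only through the algebraic factor $\eta$, which is already $\mathcal{O}^{+}[\chi]$-integral. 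Granting this, the regulator value lies in the sub-$\mathcal{O}^{+}[\chi]$-lattice $\mathcal{T} \otimes \mathcal{O}^{+}[\chi] \subseteq \mathcal{T} \otimes \mathcal{O}$, and the surjectivity modulo torsion of the extended regulator $K_{1-2r}(F) \otimes \mathcal{O}^{+}[\chi] \twoheadrightarrow \mathcal{T} \otimes \mathcal{O}^{+}[\chi]$ (which follows from the analogous $\mathbb{Z}$-level surjectivity together with $\mathbb{Z}$-flatness of $\mathcal{O}^{+}[\chi]$) then delivers the required $\widetilde{\epsilon}_{1}(\chi, S)$.

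The main obstacle is precisely this integrality step. Rigorously, one must identify the intersection $(\mathcal{T} \otimes \mathcal{O}) \cap (\mathbb{R}[G] \otimes_{\mathbb{Z}} \mathcal{O}^{+}[\chi])$, taken inside $\mathbb{R}[G] \otimes \mathcal{O}$, with $\mathcal{T} \otimes \mathcal{O}^{+}[\chi]$; this relies on the freeness of the lattice $\mathcal{T}$ over $\mathbb{Z}$ and on a careful bookkeeping of the tensor product structure so that the product $L_{S}^{\prime}(r, \chi^{-1}) \cdot \eta$ is realized as an element of $\mathbb{R}[G] \otimes \mathcal{O}^{+}[\chi]$ rather than merely of $\mathbb{R}[G] \otimes \mathcal{O}$. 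One may also need to verify that the embedding $\mathcal{O}^{+}[\chi] \hookrightarrow \mathcal{O}$ is pure (equivalently, that $\mathcal{O}/\mathcal{O}^{+}[\chi]$ is torsion-free as a $\mathbb{Z}$-module), which holds for instance when $\mathcal{O}^{+}[\chi]$ is the maximal order of $\mathcal{E}^{+}[\chi]$ inside $\mathcal{O}$.
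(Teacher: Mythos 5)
Your overall strategy is the same as the paper's: by Proposition \ref{proposition3.2} the regulator value of $\epsilon_{\sigma}(\chi,S)$ equals $L_{S}^{'}(r,\chi^{-1})$ times an element $\eta$ of $\mathbb{Z}[\chi][G]$, it lies in $\mathcal{T}\otimes\mathcal{O}$ by hypothesis, and one wants to conclude that its coordinates with respect to a $\mathbb{Z}$-basis of $\mathcal{T}$ in fact lie in $\mathcal{O}^{+}[\chi]$. The problem is that you never prove this last step: you write ``granting this'' and then describe it as ``the main obstacle'' which ``one must'' resolve by ``careful bookkeeping''. A proof that labels its decisive step as an open obstacle is an outline, not a proof. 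Moreover, the route you sketch for closing it --- identifying $(\mathcal{T}\otimes\mathcal{O})\cap(\mathbb{R}[G]\otimes\mathcal{O}^{+}[\chi])$ with $\mathcal{T}\otimes\mathcal{O}^{+}[\chi]$ --- carries its own difficulties, as you yourself observe: it requires $\mathcal{O}^{+}[\chi]$ to be a pure $\mathbb{Z}$-submodule of $\mathcal{O}$ (which can fail when $\mathcal{O}^{+}[\chi]$ is a non-maximal order in its fraction field), and it presupposes that $L_{S}^{'}(r,\chi^{-1})\cdot\eta$ has already been realized as an element of $\mathbb{R}[G]\otimes\mathcal{O}^{+}[\chi]$ whose image in $\mathbb{R}[G]\otimes\mathcal{O}$ is $\rho^{r}_{F}(\epsilon_{\sigma}(\chi,S))$ --- which is essentially the assertion to be proved.

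The paper closes this gap by a direct coefficient comparison rather than a lattice intersection. Fix a $\mathbb{Z}$-basis $(a_{i})_{i=1,\dots,g}$ of $\mathcal{T}$; the hypothesis gives $x=\Sigma_{i=1}^{g}x_{i}a_{i}$ with $x_{i}\in\mathcal{O}$. On the other hand, $(b_{h}:=L_{S}^{'}(r,\chi^{-1})h)_{h\in G}$ is another basis of $\mathbb{R}[G]$, in which $x=\Sigma_{h\in G}x_{h}^{'}b_{h}$ with $x_{h}^{'}=w_{1-r}(F^{\mathrm{ker}(\chi)})\chi^{-1}(\sigma h)\in\mathbb{Z}[\chi]$; since the change of basis between $(b_{h})_{h\in G}$ and $(a_{i})_{i}$ has real entries, this yields a second expansion $x=\Sigma_{i=1}^{g}y_{i}a_{i}$ with $y_{i}\in\mathbb{R}(\chi)$. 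Because the $g$ vectors $(a_{i})$ span $\mathbb{R}[G]$ over $\mathbb{R}$, they remain linearly independent over $\mathbb{C}$, so the two expansions force $x_{i}=y_{i}\in\mathcal{O}\cap\mathbb{R}(\chi)\subseteq\mathcal{O}^{+}[\chi]$ for every $i$. This coefficientwise argument is exactly the content your proposal defers; the reduction to $\sigma=1$ and the identification of the algebraic factor $\eta$ are correct but constitute only the easy part of the proof.
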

\begin{proof}
Let $\sigma$ be an automorphism of $G$ and let $\chi$ be a one dimensional character of $G$. Suppose there exists an element $\epsilon_\sigma(\chi, S)^{'}\in K_{1-2r}(F)\otimes \mathcal{O}$ such that
$$\rho^{r}_{F}(\epsilon_{\sigma}(\chi, S)^{'}) =  
w_{1-r}(F^{\mathrm{ker}(\chi)})L_{S}^{'}(r,\chi^{-1})\chi^{-1}(\sigma)\mid{G}\mid e_{\chi}.$$
This means that
$$x:= w_{1-r}(F^{\mathrm{ker}(\chi)})L_{S}^{'}(r,\chi^{-1})\chi^{-1}(\sigma)\mid{G}\mid e_{\chi}\in \mathcal{T}\otimes \mathcal{O}$$
Let $g:=\mid G\mid$ and let $(a_i)_{i=1,...,g}$ be a $\mathbb{Z}$-basis of $\mathcal{T}$. Since $x\in \mathcal{T}\otimes \mathcal{O}$, there exist elements $x_1,...,x_g$ in $\mathcal{O}$ such that
\begin{equation}\label{eq1}
x=\Sigma_{i=1}^{g}x_i a_i
\end{equation}
Consider now the complete $\mathbb{Z}$-lattice $\mathcal{T}^{'}$ of the $\mathbb{R}$-vector space $\mathbb{R}[G]$ generated by the basis $(b_h:=L_{S}^{'}(r,\chi^{-1})h)_{h\in G}$ (it is implied that $L_{S}^{'}(r,\chi^{-1})\not =0$).
The element $x$ is written in the basis $(b_h)_{h\in G}$ as a sum
$$x=\Sigma_{h\in G}x_{h}^{'} b_h,$$
where, for each $h\in G$, $x_{h}^{'}=w_{1-r}(F^{\mathrm{ker}(\chi)})\chi^{-1}(\sigma h)\in \mathbb{Z}[\chi]$.
Since $\mathbb{R}\mathcal{T}=\mathbb{R}\mathcal{T}^{'}(=\mathbb{R}[G])$, by change of basis we get again
\begin{equation}\label{eq2}
x=\Sigma_{i=1}^{g}y_i a_i
\end{equation}
with $y_i \in \mathbb{R}(\chi)$, for each $i=1,...,g$. We compare now (\ref{eq1}) and (\ref{eq2}). For this we should first study the linear dependency of the family of vectors $(a_i)_{i=1,...,g}$ over $\mathbb{C}$.\\
The $\mathbb{Z}$-basis $(a_i)_{i=1,...,g}$ is also a basis of the vector space $\mathbb{R}T=\mathbb{R}[G]$. If, the family of vectors $(a_i)_{i=1,...,g}$ 
is linearly dependent over the field of complex numbers, then the $\mathbb{C}$-vector space $\Sigma_{i=1}^{g}\mathbb{C}a_i$ has necessarily dimension strictly less than $g$. However, since $\Sigma_{i=1}^{g}\mathbb{R}a_i=\mathbb{R}[G]$, we get 
\begin{align*}
\Sigma_{i=1}^{g}\mathbb{C}a_i&=\mathbb{C}[G]
\end{align*}
which shows indeeed that $(a_i)_{i=1,...,g}$ are linearly independent over $\mathbb{C}$. Therefore for each $i=1,...,g$, we have $x_i =y_i \in \mathcal{O}\cap \mathbb{R}(\chi)\subseteq \mathcal{O}^{+}[\chi]$.\\
\hspace*{\fill}$\Box$
\end{proof}
 
\begin{proposition}\label{proposition3.4}
If Conjecture \ref{conjecture3.1} holds for all sets of data $(F/k, r, S, \mathcal{O}_p)$ for all $p$-adic integers $p$, then Conjecture \ref{conjecture2.1} holds for $(F/k, r, S, \mathcal{O})$.\\
If Conjecture \ref{conjecture3.1} holds for  $(F/k, r, S, \mathcal{O}_p)$ for all but a finite set of rational primes $\{p_1,...,p_n\}$, then Conjecure \ref{conjecture2.1} holds for $(F/k, r, S, \mathcal{O}[1/m])$, where $m=\prod_{i=1}^{n}p_i\in\mathbb{Z}$.
\end{proposition}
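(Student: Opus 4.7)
The plan is to recast both conjectures as membership conditions for the single element
$$x_\chi := w_{1-r}(F^{\mathrm{ker}(\chi)})\, L_S^{'}(r,\chi^{-1})\, \chi^{-1}(\sigma)\, |G|\, e_\chi \in \mathbb{R}[G]$$
and then to apply a standard local--global argument to its coordinates with respect to a $\mathbb{Z}$-basis of the lattice $\mathcal{T}$. Because $\mathcal{T}$ is by construction the image of $K_{1-2r}(F)$ under $\iota \circ \rho_F^r$, this map induces a surjection modulo torsion, which persists after extension of scalars by the flatness of $\mathcal{O}$ and of $\mathcal{O}_p$ over $\mathbb{Z}$. Combined with Proposition \ref{proposition3.2} and the first remark following Conjecture \ref{conjecture3.1}, this reduces Conjecture \ref{conjecture2.1} for $(F/k, r, S, \chi, \mathcal{O})$ to the condition $x_\chi \in \mathcal{T} \otimes \mathcal{O}$, and Conjecture \ref{conjecture3.1} for $(F/k, r, S, \mathcal{O}_p)$ to the condition $x_\chi \in \mathcal{T} \otimes \mathcal{O}_p$.

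Next I would fix a $\mathbb{Z}$-basis $(a_i)_{1 \leq i \leq g}$ of $\mathcal{T}$. Part 1 of Conjecture \ref{conjecture3.1}, which is implicit in the hypothesis, places $x_\chi$ inside $\mathcal{T} \otimes \mathcal{E}$; consequently there exist unique coordinates $c_i \in \mathcal{E}$ with $x_\chi = \sum_{i=1}^{g} c_i\, a_i$. Since the $a_i$ form an $\mathcal{O}_p$-basis of $\mathcal{T} \otimes \mathcal{O}_p$, the condition $x_\chi \in \mathcal{T} \otimes \mathcal{O}_p$ is equivalent to $c_i \in \mathcal{O} \otimes_{\mathbb{Z}} \mathbb{Z}_p$ for every $i$.

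Finally I would invoke the classical local--global identity for rings of integers, namely
$$\mathcal{E} \cap \bigcap_p \bigl(\mathcal{O} \otimes_{\mathbb{Z}} \mathbb{Z}_p\bigr) = \mathcal{O},$$
obtained from the decomposition $\mathcal{O} \otimes_{\mathbb{Z}} \mathbb{Z}_p = \prod_{\mathfrak{p}|p} \widehat{\mathcal{O}_{\mathfrak{p}}}$ and the valuative characterization of integrality. Applied to each $c_i$, this yields the first assertion; the same argument with the intersection restricted to primes $p \notin \{p_1, \ldots, p_n\}$ yields $c_i \in \mathcal{O}[1/m]$ and hence the second assertion. The whole argument is essentially formal, and no serious obstacle is anticipated: the substantive content is packaged into the reformulation of the two conjectures as lattice-membership conditions, after which only flat base change and the local--global principle for Dedekind rings are required.
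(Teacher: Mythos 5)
Your proposal is correct and follows essentially the same route as the paper: reduce both conjectures to the lattice-membership condition $x_\chi\in\mathcal{T}\otimes\mathcal{O}_p$ (respectively $\mathcal{T}\otimes\mathcal{O}$), expand $x_\chi$ in a fixed $\mathbb{Z}$-basis of $\mathcal{T}$, and conclude coordinate by coordinate. The only difference is in the last step, where you invoke the identity $\mathcal{O}=\mathcal{E}\cap\bigcap_{p}(\mathcal{O}\otimes_{\mathbb{Z}}\mathbb{Z}_p)$ while the paper argues with coprime numerators and denominators in $\mathcal{O}$; your formulation is the more robust one, since $\mathcal{O}$ need not be a unique factorization domain.
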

\begin{proof}
Fix a choice of an abelian extension $F/k$ with $k$ imaginary quadratic, an integer $r<0$ and a finite set $S$ of places of $k$ containing the set of infinite places.\\
Let $\sigma\in G$ and $\chi$ be a character of $G$. Suppose that conjecture \ref{conjecture3.1} holds for $(F/k, r, S, \mathcal{O}_p)$ for all primes $p$. In particular, for each rational prime $p$, there exists an element $\epsilon_{\sigma}(\chi, S, p)\in K_{1-2r}\otimes \mathcal{O}_p$ which verifies the two statements of conjecture \ref{conjecture3.1}.\\
The first statement of Conjecture \ref{conjecture3.1} ensures that  
$$x:=w_{1-r}(F^{\mathrm{ker}(\chi)})L_{S}^{'}(r,\chi^{-1})\chi^{-1}(\sigma)\mid{G}\mid e_{\chi}\in \mathcal{T}\otimes \mathcal{E}$$
(recall that $\mathcal{E}$ is the field of fractions of $\mathcal{O}$).
Let $g:=\mid G\mid$ and  $(a_i)_{i=1,...,g}$ be a basis of the $\mathbb{Z}$-lattice $\mathcal{T}$.
By the above, there exist coefficients $x_i\in \mathcal{E}$, for $i=1,...,g$ such that
$$x=\Sigma_{i=1}^{g}x_i a_i$$
Yet, the second  statement of Conjecture \ref{conjecture3.1} shows that
$$x\in \mathcal{T}\otimes \mathcal{O}_p \mathrm{\;for\;all\;rational\;primes\;}p$$
which means that $x_i\in \mathcal{O}_p$ for all primes $p$. Fix an integer $i\in\{1,...,g\}$. Since $\mathcal{E}$ is the field of fractions of $\mathcal{O}$, there exist $b_i$ and  $c_i$ relatively prime in $\mathcal{O}$, such that $x_i =b_i/c_i$. As, for each $p$, $x_i\in \mathcal{O}_p$, the denominator $c_i$ is necessarily not divisible by $p$. Hence, $c_i$ is not divisible by all primes $p$ and is therefore a unit in $\mathcal{O}$. which gives  $x_i\in \mathcal{O}$ and the first claim ensues.\\
Similarly, if $x_i\in \mathcal{O}_p$ for all but a finite set of primes $\{p_1,...,p_n\}$, then $c_i$ is only divisible by primes $p\in \{p_1,...,p_n\}$. Thus 
$x_i\in \mathcal{O}[\frac{1}{p_1p_2...p_n}]$.\hspace*{\fill}$\Box$

\end{proof}

\section{Relationship with the Equivariant Tamagawa Number Conjecture}
\subsubsection*{The functor $\mathrm{Det}$}
Let $R$ be a ring and $P$ a finitely generated projective $R$-module. For every prime ideal $\mathfrak{p}\in \mathrm{Spec}(R)$, the localization $P_{\mathfrak{p}}$ is a finitely generated free $R_\mathfrak{p}$-module (since $R_\mathfrak{p}$ is local).
\begin{definition}
The rank of a finitely generated projective $R$-module at a
prime ideal $\mathfrak{p}$ of $R$ is the integer $\mathrm{rk}_{\mathfrak{p}}P = \mathrm{rk}_{R_\mathfrak{p}} P_\mathfrak{p}$. 
\end{definition}
Note that $\mathrm{rk}_{\mathfrak{p}}P$ is well defined (unique) since local fields have IBN (invariant basis number).\\
One can also verify that alternatively
 $$\mathrm{rk}_{\mathfrak{p}}P=\mathrm{rk}_{\kappa_{\mathfrak{p}}}(P\otimes_{R} \kappa_{\mathfrak{p}})$$
where $\kappa_{\mathfrak{p}}=R_{\mathfrak{p}}/\mathfrak{p}R_{\mathfrak{p}}$ is the residue field at $\mathfrak{p}$.\\
For a fixed finitely generated projective $R$-module $P$, one can define a function
$\mathrm{Spec}(R)\rightarrow \mathbb{Z}$, which maps any prime $\mathfrak{p}\in \mathrm{Spec}(R)$ to $\mathrm{rk}_{\mathfrak{p}}P$. We recall the following (\cite{Bourbaki}, Chapter II, §5.3, Theorem 1)\\
\textbf{Property}
\textit{The function $\mathrm{rk}_{P} : \mathrm{Spec}(R) \rightarrow \mathbb{Z}$ is  locally constant (and hence continuous with
respect to the Zariski topology on $\mathrm{Spec}(R)$ and the discrete topology on $\mathbb{Z}$) and bounded.}\\

\noindent We say that the finitely generated projective $R$-module $P$ has constant rank over $R$, if the associated function $\mathrm{rk}_{P} : \mathrm{Spec}(R) \rightarrow \mathbb{Z}$ is constant. We write in this case $\mathrm{rk}_{R}(P)=\mathrm{rk}_{P}(\mathrm{Spec}(R))$.\\

\noindent\textbf{Corollary}
\textit{If $\mathrm{Spec}(R)$ is connected (which is equivalent to $R$ having no nontrivial idempotents), then every finitely generated
projective module over $R$ has constant rank.}
\\

\noindent\textbf{Example}
\textit{If $R$ is an integral domain then every finitely generated
projective $R$-module has constant rank.}\\
\begin{definition}
Let $A^{\bullet}$ denote a complex of $R$-modules. The complex $A^{\bullet}$ is said to be perfect if there exists  a bounded complex $P^{\bullet}$ of finitely generated  projective $R$-modules and a morphism of complexes $\pi : P^{\bullet}\rightarrow A^{\bullet}$ which induces isomorphisms for all  $i$
$$H^{i}(P^{\bullet})\xrightarrow{\;\sim\;}   H^{i}(A^{\bullet}).$$
(In this case we say that $\pi$ is a quasi-isomorphism).
\end{definition}
\textbf{Remark}\\
\textit{An $R$-module is said to be perfect if it is perfect when considered as the complex $0\rightarrow A\rightarrow 0$.}\\

\noindent We denote by $\mathcal{D}^{\mathrm{p}}(R)$ the category of perfect complexes of $R$-modules.\\
Let $P$ be a finitely generated projective $R$-module. Let $R=\oplus_{i=1}^{s}R_{i}$ be a decomposition of $R$ as a direct finite sum of rings $R_{i}$ with connected spectra. For each $i$ one has  $R_{i}=e_{i}.R$, where $e_{i}$ is an indecomposable idempotent (e.g. \cite{Popescu}, §1.3). We have a direct sum decomposition $P=\oplus_{i=1}^{s}P_{i}$, where $P_{i}=e_{i}P$ is a projective $R_{i}$-module of constant  rank $r_{i}$.\\
The Knudsen-Mumford determinant functor \cite{KM} of $P$ over $R$, is then the graded invertible (projective of constant rank 1) $R$-module
$$\mathrm{Det}_{R}P:=\oplus_{i=1}^{s}\bigwedge^{r_{i}}_{R_{i}}P_{i}$$
Let $A^{\bullet}$ be an object of $\mathcal{D}^{\mathrm{p}}(R)$ and suppose that $A^{\bullet}$ is quasi-isomorphic to the bounded complex of finitely generated projective $R$-modules $P^{\bullet}$. The Knudsen-Mumford determinant functor  of $A^{\bullet}$ over $R$ depends only on the quasi-isomorphism class of $A^{\bullet}$ and is defined as 
$$\mathrm{Det}_{R}A^{\bullet}=\mathrm{Det}_{R}P^{\bullet}:=\otimes_{i\in\mathbb{Z}}\mathrm{Det}_{R}^{(-1)^{i}}P^{i}.$$
\begin{remark}
We used in the defintion of $\mathrm{Det}_{R}A^{\bullet}$ the parity convention of \cite{KM} rather than that of \cite{burnsGreither}. However, this will not affect our results and the formulation of ETNC with this convention is equivalent to the formulation given in \cite{burnsGreither}.
\end{remark}
We recall some properties of the Knudsen-Mumford determinant functor (e.g. \cite{burnsGreither}, §2)\\

\noindent\textbf{Properties of the functor $\mathrm{Det}$}\\
\begin{itemize}
\item[1.]$\mathrm{Det}^{-1}_{R}A^{\bullet}=\mathrm{Hom}_{R}(\mathrm{Det}_{R}A^{\bullet}, R)$.
\item[2.]If the cohomology groups $H^{i}(A^{\bullet})$ are all perfect then
$$\mathrm{Det}_{R}A^{\bullet}=\otimes_{i\in\mathbb{Z}}\mathrm{Det}_{R}^{(-1)^{i}}H^{i}(A^{\bullet})$$
\item[3.]If $C_{1}^{\bullet}\rightarrow C_{2}^{\bullet}\rightarrow C_{3}^{\bullet}$ is a distinguished triangle (e.g. \cite{nekovar}, 1.1.3) of perfect complexes of $R$-modules then
$$\mathrm{Det}_{R}C_{2}^{\bullet}\simeq \mathrm{Det}_{R}C_{1}^{\bullet}\otimes \mathrm{Det}_{R}C_{3}^{\bullet}$$
\item[4.]If $A$ is a finitely generated torsion $R$-module which has a projective dimension at most 1, then $A$ is perfect and
$$\mathrm{Det}_{R}A=(\mathrm{Fitt}_{R}A)^{-1}.$$
\end{itemize}

\subsubsection*{The fundamental line}
Let $F/k$ be an abelian extension of number fields, $E$ a number field. \textit{We suppose in the rest of the article that $r$ is a strictly negative integer}. We set
$$E(r)_{F}:=h^{0}(\mathrm{Spec}(F))_{E}(r).$$
for the motive of $F$ over $k$ with coefficients in $E$ and twist $r$.
Since $r<0$ we set
$$Y_{r}(F)=\prod_{\sigma\in \Sigma_{F}}(2\pi i)^{-r}\mathbb{Z}.$$
where $\Sigma_{F}$ denotes again the set of embeddings of $F$ in the field of complex numbers.

For a motive $M$ of $F$ over $k$ with coefficients in $E$ one has a dual motive $M^{\vee}$ with dual realisations. In general, if $X$ is a smooth projective variety of dimenion $d$, and $M=h^{i}(X)(j)$, then $M^{\vee}$ identifies with $h^{2d-i}(X)(d-j)$ (e.g. \cite{FlachSurvey}, Part 1, §2).\\ For $M=E(r)_{F}$, $r<0$, we define the fundamental line in terms of motivic cohomology to be 
$$\Xi(M):=\mathrm{Det}_{E[G]}(H^{0}_{\mathcal{M}}(M))\otimes \mathrm{Det}_{E[G]}((H^{1}_{\mathcal{M}}(M^{\vee}(1))^{\vee})\otimes \mathrm{Det}^{-1}_{E[G]}(M_{B}^{+})$$
where $(H^{1}_{\mathcal{M}}(M^{\vee}(1))^{\vee}=\mathrm{Hom}_{E}(H^{1}_{\mathcal{M}}(M^{\vee}(1),E)$, and $M_B$ denotes the Betti realization of $M$.\\
For $M=E(r)_{F}$, one has $H^{0}_{\mathcal{M}}(E(r)_{F})=0$ ($r<0$), $H^{1}_{\mathcal{M}}(E(r)_{F}^{\vee}(1))=K_{1-2r}(O_{F})\otimes E$ and $(E(r)_{F})_{B}\cong Y_{r}(F)^{\vee}\otimes E $.\\

\noindent Let the map $x\mapsto x^{\#}$ denote the $\mathbb{Z}$-linear involution of the group ring $\mathbb{Z}[G]$ which satisfies $g^{\#}=g^{-1}$ for each $g\in G$. If $X$ is any (complex of) $\mathbb{Z}[G]$-module(s), then we write $X^{\#}$ for the scalar extension of $X$ with respect to the morphism $x\mapsto x^{\#}$.\\

\noindent If $X$ is an object of the category $\mathcal{D}^{p}(E[G])$ of perfect complexes of $E[G]$-modules we denote by $X^{\vee}$ the complex $X^{\vee}=\mathrm{RHom}_{E}(X,E)$ which is endowed with the contragredient $G$-action and is also an object of $\mathcal{D}^{p}(E[G])$.\\

Recall that we have a canonical isomorphism  $M^{\vee}\cong \mathrm{Hom}_{E[G]}(M, E[G])^{\#}$ for any $E[G]$-module $M$ (e.g. \cite{Popescu}, Remark 1.1.1), which induces for each object $X$ in the category of perfect complexes of $E[G]$-modules the following canonical isomorphism (e.g. \cite{burnsGreither}, §2 (after Lemma 2.1))
$$\mathrm{Det}_{E[G]}X^{\vee}\cong \mathrm{Det}_{E[G]}^{-1}X^{\#}$$
In fact, since the functor $\mathrm{Det}_{E[G]}$ only depends on the quasi-isomorphism class of $X$, it suffices to show the result for a bounded complex of finitely generated projective $E[G]$-modules $P^{\bullet}$.\\
One has 
$$\mathrm{Det}_{E[G]}(P^{\bullet})^{\vee}:=\mathrm{Det}_{E[G]}\mathrm{RHom}_{E}(P^{\bullet}, E)$$
Yet, $\mathrm{RHom}_{E}(P^{\bullet}, E)$ is quasi-iomorphic to the bounded complex of finitely generated projective $E[G]$-modules whose $(-n)$-th entry is $\mathrm{Hom}_{E}(P^{n}, E)$. Hence
\begin{align*}
\mathrm{Det}_{E[G]}(P^{\bullet})^{\vee}&=\otimes_{i\in\mathbb{Z}}\mathrm{Det}^{(-1)^{i}}_{E[G]}(P^{i})^{\vee}\\
&\cong\otimes_{i\in\mathbb{Z}}\mathrm{Det}^{(-1)^{i}}_{E[G]}\mathrm{Hom}_{E[G]}(P^{i},E[G])^{\#}
\end{align*}
Clearly, we have $\mathrm{Hom}_{E[G]}(P^{i},E[G])^{\#}=\mathrm{Hom}_{E[G]}((P^{i})^{\#},E[G])$. Thus
\begin{align*}
\mathrm{Det}_{E[G]}(P^{\bullet})^{\vee}&\cong\otimes_{i\in\mathbb{Z}}\mathrm{Det}^{(-1)^{i}}_{E[G]}\mathrm{Hom}_{E[G]}((P^{i})^{\#},E[G])\\
&\cong \otimes_{i\in\mathbb{Z}}\mathrm{Det}^{(-1)^{i+1}}_{E[G]}(P^{i})^{\#}\;\mathrm{(use\; e.g.\;}\cite{Popescu}\mathrm{\;,Remark\;1.1.2)}\\
&=\mathrm{Det}^{-1}_{E[G]}(P^{\bullet})^{\#}
\end{align*}
We apply this result to the fundamental line and get the following
$$\Xi(E(r)_{F})= \mathrm{Det}^{-1}_{E[G]}(K_{1-2r}(F)\otimes E)^{\#}\otimes \mathrm{Det}_{E[G]}(Y_{r}(F)^{+}\otimes E)^{\#},$$
This is the definition of the fundamental line as given in (\cite{burnsGreither}, §3.1), but with inverted signs of $\mathrm{Det}$ since we adopted here the parity convention of \cite{KM}.
 \subsection{Cohomological data}
\subsubsection{\small{$f$-cohomology}}
For any commutative ring $Z$ and any étale sheaf $\mathcal{F}$, we abbreviate in the sequel $R\Gamma(\mathrm{Spec}(Z)_{ét}, \mathcal{F})$ and $H^{i}(R\Gamma(\mathrm{Spec}(Z)_{ét}, \mathcal{F}))$ to $R\Gamma(Z, \mathcal{F})$ and $H^{i}(Z, \mathcal{F})$ respectively.\\
Let $p$ be a rational prime and let us denote by $M_{p}:=H^{0}_{ét}(\mathrm{Spec}(F)\times_{k}\overline{k}, E_p(r))$ (with $E_p:=E\otimes \mathbb{Q}_p$) the étale realization of the motive $E(r)_{F}$. Following Fontaine \cite{Fontaine}, for every finite place $v$ of $k$ define the local unramified cohomology of $M_p$ to be the complex
$$ R\Gamma_{f}(k_{v}, M_{p})=
\left\{
\begin{array}{rl}
M_{p}^{I_{v}}\stackrel{1-\mathrm{Frob}_{v}^{-1}}{\rightarrow} M_{p}^{I_{v}}\;\;\;v\nmid p \\
(B_{cris}\otimes M_{p})^{G_{k_v}}\stackrel{1-\mathrm{Frob_{v}^{-1}}}{\rightarrow}(B_{cris}\otimes M_{p})^{G_{k_v}}\oplus (B_{\mathrm{dR}}/\mathrm{Fil}^{0}\otimes M_{p})^{G_{k_{v}}}\;\;\;v\mid p
\end{array}
\right.
$$
where $I_v$ is the inertia group at $v$, and $B_{cris}\subset B_{\mathrm{dR}}$ are Fontaine's rings of $p$-adic periods (cf. \cite{Fontaine1}). We also write $(\mathrm{Fil}^{i})_{i\in\mathbb{Z}}=(\mathrm{Fil}^{i}(B_{\mathrm{dR}}))_{i\in\mathbb{Z}}$ for the decreasing filtration associated to $B_{\mathrm{dR}}$.\\
Note that the first cohomology group $H^{1}_{f}(k_{v}, M_{p})$ of the complex $\mathrm{R\Gamma }_{f}(k_{v}, M_{p})$ is Bloch-Kato's local condition (\cite{BK}, §3, (3.8.2)) or Mazur-Rubin's finite condition (\cite{MazurRubin}, Def. 1.1.6) defined by
$$ H^{1}_{f}(k_{v}, M_{p})=
\left\{
\begin{array}{rl}
\mathrm{ker}(H^{1}(k_{v}, M_{p})\rightarrow H^{1}(I_{v}, M_{p}))\;\;\;v\nmid p\\
\mathrm{ker}(H^{1}(k_{v}, M_{p})\rightarrow H^{1}(k_{v}, M_{p}\otimes B_{cris}))\;\;\;v\mid p
\end{array}
\right.
$$
It is also worth noting that the tangent space $t_{v}(M):=(B_{\mathrm{dR}}/\mathrm{Fil}^{0}\otimes M_{p})^{G_{k_{v}}}$ is trivial for our motive. In fact  by (\cite{burnsFlach1}, §1.4) one has an isomorphism
$$\oplus_{v\in S_{p}}t_{v}(M)\cong (H_{dR}(M)/\mathrm{Fil}^{0}(H_{dR}(M))\otimes E_{p}$$
where $E_{p}:=E\otimes \mathbb{Q}_{p}$, and $S_p$ is the set of places of $k$ above $p$. However for $M= E(r)_{F}$, we have $H_{dR}(M)=\mathrm{Fil}^{0}(H_{dR}(M)= F$ (e.g. \cite{burnsFlach1}, §1.3) or \cite{burnsFlach1}, §1.1, p. 70).\
\\
If the place $v$ is archimedean, we define the complex $R\Gamma_{f}(k_{v}, M_{p})$ of $f$-cohomology as
$$R\Gamma_{f}(k_{v}, M_{p}):= R\Gamma(k_{v}, M_{p})\;\;\;\;v\mid\infty$$
We Define also the complex
$$R\Gamma_{/f}(k_{v},M_{p}):= \mathrm{Cone}(R\Gamma_{f}(k_{v},M_{p})\rightarrow R\Gamma(k_{v},M_{p}))$$
where Cone() is the cone functor (e.g. \cite{nekovar}, (1.1.2)).\\
Note that when $v\mid \infty$, the complex $R\Gamma_{/f}(k_{v},M_{p})$ is acyclic.\\

For any finite set $S$ of places of $k$ containing the set of $p$-places, and the set $S_\infty$ of infinite places, we define the global unramified cohomology as 
$$R\Gamma_{f}(O_{k,S} ,M_{p}):= \mathrm{Cone}(R\Gamma(O_{k,S},M_{p})\rightarrow \bigoplus_{v\in S}R\Gamma_{/f}(k_{v},M_{p}))[-1]$$
where $O_{k,S}$ denotes the ring of $S$-integers of $k$.
\subsubsection{\small{Compact Support cohomology}}
Let $T_p$ be any Galois stable lattice of $M_p$, and $S$ a finite set of places of $k$ containing the set $S_p$ of places of $k$ above $p$ and the set $S_\infty$ of infinite places of $k$.
The cohomology with compact support (e.g. \cite{nekovar}, 5.3.1) is defined so as to lie in a canonical distinguished triangle
$$R\Gamma_{c}(O_{k,S}, T_p)\rightarrow R\Gamma(O_{k,S}, T_p)\rightarrow \oplus_{v\in S}R\Gamma(k_{v}, T_p),$$
where, we write $R\Gamma(O_{k,S}, T_p)$ for the complex of étale cohomology $R\Gamma_{et}(\mathrm{Spec}O_{k,S}, T_p)$.
\begin{remark}
Let $E_{p}:=E\otimes \mathbb{Q}_{p}$ and $O_{p}:=O\otimes \mathbb{Z}_{p}$, where $O$ denotes the ring of integers of $E$. Let $S$ be a finite et of place of $k$ containing the set $S_p$ of $p$-places of $k$.
\begin{itemize}
\item[1] The complex $R\Gamma_{f}(k_{v}, M_{p})$ is perfect over $E_{p}[G]$ (e.g. \cite{Burns}, Lemma 12.2.1).
\item[2] Let $T_{p}$ denote any Glois-stable lattice inside $M_p$. The complexes $R\Gamma_{c}(O_{k,S}, T_p)$ is perfect over $O_{p}[G]$ (e.g. \cite{burnsFlach1}, Proposition 1.20).
\item[3] Since $E_{p}[G]$ is semisimple, by (\cite{nekovar}, 4.2.8), a complexe of $E_{p}[G]$-modules is perfect if and only if it has bounded finitely generated cohomology. If $k$ is totally imaginary, the complexes $R\Gamma(O_{k,S}, M_p)$ and $R\Gamma(k_{v}, M_{p})$ are then perfect. 
\item[4] The complexes $R\Gamma_{/f}(k_{v}, M_{p})$ and $R\Gamma_{f}(O_{k,S}, M_p)$ are also perfect over $E_{p}[G]$. This ensues from their respective definitions and the points $1$ and $3$ made above.
\item[5] If $p$ doesn't divide $\#G$, the previous observations can be also used to show for example that the complexe $R\Gamma(O_{k,S}, T_p)$ is perfect over $O_{p}[G]$.
\end{itemize}
\end{remark}
\subsection{The isomorphisms $\vartheta^{r}_{F, S_p}$ and $\vartheta^{r}_{F, \infty}$}
\subsubsection{The isomorphisms $\vartheta^{r}_{F, S_p}$}
\begin{lemma}\label{lemma4.1}
For every sequence of morphisms of complexes
$$X\stackrel{f}\longrightarrow Y \stackrel{g}\longrightarrow Z$$
one has a distinguished triangle
$$\mathrm{Cone}(f)\longrightarrow \mathrm{Cone}(g\circ f)\longrightarrow \mathrm{Cone}(g).$$
\end{lemma}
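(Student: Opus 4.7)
The plan is to recognize Lemma~\ref{lemma4.1} as a concrete instance of the octahedral axiom (TR4) in the triangulated category of complexes, and to establish it by an explicit construction of the requisite morphisms together with a verification that the resulting triangle is distinguished.

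First I would fix sign conventions for the mapping cone: for any morphism of complexes $\phi : A \to B$, set $\mathrm{Cone}(\phi)^n = A^{n+1}\oplus B^n$ with differential $d(a,b) = (-d_A a,\, \phi(a) + d_B b)$. With $f : X \to Y$ and $g : Y \to Z$ in hand, I would then write down the two natural morphisms of complexes
\[
\alpha : \mathrm{Cone}(f) \longrightarrow \mathrm{Cone}(g\circ f),\quad (x,y)\longmapsto (x,\, g(y)),
\]
\[
\beta : \mathrm{Cone}(g\circ f) \longrightarrow \mathrm{Cone}(g),\quad (x,z)\longmapsto (f(x),\, z),
\]
and verify that they commute with the differentials, which reduces to the fact that $f$ and $g$ are themselves chain maps. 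I would also produce the connecting morphism $\gamma : \mathrm{Cone}(g) \to \mathrm{Cone}(f)[1]$, of the form $(y,z) \mapsto (0,\,y)$ up to sign, so as to assemble a candidate distinguished triangle.

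The last and most substantive step is to verify that this candidate is actually distinguished in the derived category. I would do this by exhibiting an explicit quasi-isomorphism $\mathrm{Cone}(\alpha) \to \mathrm{Cone}(g)$ intertwining the structure maps. Concretely,
\[
\mathrm{Cone}(\alpha)^n = X^{n+2}\oplus Y^{n+1}\oplus X^{n+1}\oplus Z^n,
\]
and the obvious projection onto the $Y^{n+1}\oplus Z^n$ factors lands in $\mathrm{Cone}(g)^n$. To show that this projection is a quasi-isomorphism I would identify its kernel with a shifted copy of the mapping cone of $\mathrm{id}_X$, which is contractible, so the long exact sequence in cohomology forces the projection to induce isomorphisms on all $H^i$.

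The principal obstacle in the argument is the bookkeeping of signs inside the iterated cone: one must check that $\alpha$, $\beta$, $\gamma$ are genuine chain maps and that the quasi-isomorphism above is compatible, up to homotopy, with the evident structure maps of the triangle. Once the sign conventions are fixed and carried through consistently, the remaining verifications are routine, and the result can equivalently be deduced formally from TR4 applied to the mapping cones above.
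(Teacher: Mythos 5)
Your argument is correct in substance but takes a genuinely different route from the paper: the paper gives no proof at all for this lemma, simply citing Milne, \emph{Arithmetic Duality Theorems}, Chap.~II, Proposition~0.10, whereas you prove it directly by exhibiting the triangle as an instance of the octahedral axiom and verifying distinguishedness via an explicit comparison $\mathrm{Cone}(\alpha)\to\mathrm{Cone}(g)$. Your maps $\alpha$, $\beta$, $\gamma$ are the standard ones and do commute with the differentials under your conventions. One concrete correction for the final step: the \emph{naive} projection $(x_2,y,x_1,z)\mapsto(y,z)$ from $\mathrm{Cone}(\alpha)^n=X^{n+2}\oplus Y^{n+1}\oplus X^{n+1}\oplus Z^n$ onto $Y^{n+1}\oplus Z^n$ is \emph{not} a chain map (its would-be kernel $X^{n+2}\oplus X^{n+1}$ is not a subcomplex); the correct comparison map is $(x_2,y,x_1,z)\mapsto(y+f(x_1),\,z)$, whose kernel $\{(x_2,-f(x_1),x_1,0)\}$ is then genuinely a shifted copy of $\mathrm{Cone}(\mathrm{id}_X)$, hence acyclic, so your long-exact-sequence argument goes through. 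This is exactly the kind of sign/bookkeeping issue you flagged, and with that adjustment the proof is complete. Your self-contained version has the advantage of making the structure maps of the triangle explicit, which is precisely what is needed when the lemma is applied later to identify cohomology groups degree by degree; the paper's citation is shorter but leaves those maps implicit.
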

\begin{proof}
e.g. \cite{milne}, Chap. II, Proposition 0.10.$\Box$
\end{proof}
Let $S_p$ denote the set of primes of $k$ above $p$. In the sequel, we  always abbreviate $R\Gamma_{?}(O_{k, S_{p}\cup S_\infty}, ..)$ as $R\Gamma_{?}(O_{k, S_p}, ..)$, and $H^{i}_{?}(O_{k, S_{p}\cup S_\infty}, ..)$ as $H^{i}_{?}(O_{k, S_{p}}, ..)$. We claim the following:
\begin{proposition}\label{proposition4.2}
There is a distinguished triangle
$$R\Gamma_{c}(O_{k,S_p}, M_p)\rightarrow R\Gamma_{f}(O_{k,S_p}, M_p)\rightarrow \oplus_{v\in S_p}R\Gamma_{f}(k_{v}, M_p)\oplus \oplus_{v\in S_\infty}R\Gamma(k_{v}, M_p)$$
\end{proposition}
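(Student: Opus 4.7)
The plan is to derive the desired distinguished triangle as a direct application of Lemma \ref{lemma4.1} to a carefully chosen composition, after rewriting the three defining triangles (for $R\Gamma_c$, for $R\Gamma_f(O_{k,S_p},-)$ and for $R\Gamma_{/f}(k_v,-)$) as cones.

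First, I would fix the set $S=S_p\cup S_\infty$ and consider the sequence of morphisms
$$R\Gamma(O_{k,S}, M_p)\xrightarrow{\;f\;}\bigoplus_{v\in S}R\Gamma(k_v,M_p)\xrightarrow{\;g\;}\bigoplus_{v\in S}R\Gamma_{/f}(k_v,M_p),$$
where $f$ is the natural localization morphism and $g$ is the direct sum of the canonical maps $R\Gamma(k_v,M_p)\to R\Gamma_{/f}(k_v,M_p)$ coming from the definition of $R\Gamma_{/f}$ as a cone. Next I would identify each of the three cones appearing in Lemma \ref{lemma4.1}. By the defining distinguished triangle of compact support cohomology, $\mathrm{Cone}(f)\simeq R\Gamma_c(O_{k,S_p},M_p)[1]$. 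By definition of $R\Gamma_f(O_{k,S_p},-)$ as $\mathrm{Cone}(g\circ f)[-1]$, one has $\mathrm{Cone}(g\circ f)\simeq R\Gamma_f(O_{k,S_p},M_p)[1]$. Finally, rotating the local triangle $R\Gamma_f(k_v,M_p)\to R\Gamma(k_v,M_p)\to R\Gamma_{/f}(k_v,M_p)$ gives $\mathrm{Cone}\bigl(R\Gamma(k_v,M_p)\to R\Gamma_{/f}(k_v,M_p)\bigr)\simeq R\Gamma_f(k_v,M_p)[1]$, so $\mathrm{Cone}(g)\simeq\bigoplus_{v\in S}R\Gamma_f(k_v,M_p)[1]$.

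Applying Lemma \ref{lemma4.1} then yields a distinguished triangle
$$R\Gamma_c(O_{k,S_p},M_p)[1]\longrightarrow R\Gamma_f(O_{k,S_p},M_p)[1]\longrightarrow \bigoplus_{v\in S}R\Gamma_f(k_v,M_p)[1],$$
and shifting by $[-1]$ gives the required triangle. To put it in the form stated in the proposition, I would invoke the author's convention that $R\Gamma_f(k_v,M_p):=R\Gamma(k_v,M_p)$ for archimedean $v$ (equivalently, $R\Gamma_{/f}(k_v,M_p)$ is acyclic there), which allows splitting the sum $\bigoplus_{v\in S}R\Gamma_f(k_v,M_p)$ as $\bigoplus_{v\in S_p}R\Gamma_f(k_v,M_p)\oplus\bigoplus_{v\in S_\infty}R\Gamma(k_v,M_p)$.

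The proof is essentially a diagram chase, so the only real subtlety is bookkeeping: one must be careful with the shifts when passing from the ``cone minus one'' definitions of $R\Gamma_c$ and $R\Gamma_f(O_{k,S_p},-)$ to the distinguished triangle produced by Lemma \ref{lemma4.1}, and one must verify that the morphism in that triangle from $R\Gamma_c$ to $R\Gamma_f$ induced by the octahedral construction is indeed the canonical comparison map (which is automatic here since all maps in the sequence $X\to Y\to Z$ are the obvious natural transformations). No serious obstacle is expected.
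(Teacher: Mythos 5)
Your proposal is correct and follows essentially the same route as the paper: both apply Lemma \ref{lemma4.1} to the composite $R\Gamma(O_{k,S_p},M_p)\to\oplus_{v\in S}R\Gamma(k_v,M_p)\to\oplus_{v}R\Gamma_{/f}(k_v,M_p)$, identify the three cones with $R\Gamma_c[1]$, $R\Gamma_f(O_{k,S_p},M_p)[1]$ and $\oplus_v R\Gamma_f(k_v,M_p)[1]$, and shift by $[-1]$, using the acyclicity of $R\Gamma_{/f}(k_v,M_p)$ at archimedean places to split off the $\oplus_{v\in S_\infty}R\Gamma(k_v,M_p)$ summand. The only (immaterial) difference is that the paper truncates the target of $g$ to the finite places from the outset, whereas you keep the full sum and invoke the archimedean convention at the end.
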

\begin{proof}
As shown above we have a distinguished triangle for every finite place $v$ of $k$
$$R\Gamma_{f}(k_{v}, M_p)\rightarrow R\Gamma(k_{v}, M_p)\stackrel{g}{\rightarrow}R\Gamma_{/f}(k_{v}, M_p).$$
Since $R\Gamma_{/f}(k_{v}, M_p)=0$ when $v$ is infinite, we consider the following (non exact) sequence of complexes
$$R\Gamma(O_{k,S_p}, M_p)\stackrel{f}{\rightarrow} \oplus_{v\in S_{p}\cup S_{\infty}}R\Gamma(k_{v}, M_p)\stackrel{g}{\rightarrow}\oplus_{v\in S_{p}}R\Gamma_{/f}(k_{v}, M_p).$$
Lemma \ref{lemma4.1} gives a distinguished triangle which we tranlate by -1
 $$Cone(f)[-1]\rightarrow Cone(g\circ f)[-1] \rightarrow Cone(g)[-1]$$
which gives by definition the distinguished triangle
$$R\Gamma_{c}(O_{k,S_p}, M_p)\rightarrow R\Gamma_{f}(O_{k,S_p}, M_p)\rightarrow \oplus_{v\in S_p}R\Gamma_{f}(k_{v}, M_p)\oplus \oplus_{v\in S_\infty}R\Gamma(k_{v}, M_p)$$
$\Box$
\end{proof}
We denote By $E_p$ the field $E_p:=E\otimes \mathbb{Q}_p$. 
\begin{proposition}\label{proposition4.3}
Suppose that $p$ is odd or that $k$ is totally imaginary if $p=2$. Let $r<0$. There exists an $E_{p}[G]$-equivariant isomorphism
$$\vartheta^{r}_{F, S_p}: \Xi(E(r)_{F})\otimes E_p\; \xrightarrow{\;\sim\;} \; \mathrm{Det}_{E_p[G]}\mathrm{R\Gamma }_{c}(O_{k,S_p}, ((E(r)_{F}))_p) $$
\end{proposition}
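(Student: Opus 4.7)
The plan is to construct $\vartheta^{r}_{F,S_p}$ by applying the determinant functor to the distinguished triangle of Proposition \ref{proposition4.2}, identifying each of the three resulting factors with well-known cohomological invariants, and then matching the assembled object against the scalar extension $\Xi(E(r)_F)\otimes E_p$ of the fundamental line. All perfectness hypotheses needed to apply $\mathrm{Det}$ are provided by the remark following the definition of compact support cohomology.

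Applying Property 3 of the determinant functor to the triangle of Proposition \ref{proposition4.2} yields a canonical isomorphism
$$\mathrm{Det}_{E_p[G]}R\Gamma_c(O_{k,S_p},M_p) \cong \mathrm{Det}_{E_p[G]}R\Gamma_f(O_{k,S_p},M_p) \otimes \mathrm{Det}_{E_p[G]}^{-1}\!\bigl(\bigoplus_{v\in S_p}R\Gamma_f(k_v,M_p)\bigr) \otimes \mathrm{Det}_{E_p[G]}^{-1}\!\bigl(\bigoplus_{v\in S_\infty}R\Gamma(k_v,M_p)\bigr).$$
For $v\mid p$, the vanishing of the tangent space $t_v(M)$ (noted just after the definition of local $f$-cohomology) reduces $R\Gamma_f(k_v,M_p)$ to the complex $(B_{cris}\otimes M_p)^{G_{k_v}}\xrightarrow{1-\mathrm{Frob}_v^{-1}}(B_{cris}\otimes M_p)^{G_{k_v}}$, whose determinant is canonically trivial. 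For $v\in S_\infty$, under the hypothesis that $p$ is odd or $k$ is totally imaginary, the complex $R\Gamma(k_v,M_p)$ is concentrated in degree $0$; the Artin comparison $M_p\otimes_{\mathbb{Q}_p}\overline{\mathbb{Q}}_p\cong M_B\otimes\overline{\mathbb{Q}}_p$ and Galois descent then produce
$$\mathrm{Det}_{E_p[G]}^{-1}\!\bigl(\bigoplus_{v\in S_\infty}R\Gamma(k_v,M_p)\bigr) \cong \mathrm{Det}_{E_p[G]}\bigl((Y_r(F)^+\otimes E)^{\#}\otimes E_p\bigr).$$

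The global factor requires the étale Chern class of Soulé--Huber (the $p$-adic avatar of the Beilinson regulator $\rho^{r}_{F}$), which provides an $E_p[G]$-equivariant isomorphism $H^1_f(O_{k,S_p},M_p)\cong (K_{1-2r}(F)\otimes E)^{\#}\otimes E_p$; since $r<0$ gives $H^0_f(O_{k,S_p},M_p)=0$ and the higher $f$-cohomologies vanish after tensoring with $\mathbb{Q}_p$, the semisimplicity of $E_p[G]$ together with Property 2 of $\mathrm{Det}$ yields
$$\mathrm{Det}_{E_p[G]}R\Gamma_f(O_{k,S_p},M_p) \cong \mathrm{Det}_{E_p[G]}^{-1}\bigl((K_{1-2r}(F)\otimes E)^{\#}\otimes E_p\bigr).$$
Combining the three factors reproduces exactly the definition of $\Xi(E(r)_F)\otimes E_p$ from Section 4, and thereby defines the desired $\vartheta^{r}_{F,S_p}$. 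The main difficulty is keeping careful track of the $\#$-duality and sign conventions so that the composition matches the Knudsen--Mumford parity convention adopted here, and verifying that the Chern class identification is $G$-equivariant on the Bloch--Kato Selmer group; the hypothesis that $p$ is odd or $k$ totally imaginary is needed precisely to kill the $2$-torsion contributions from archimedean cohomology that would otherwise obstruct the identification at infinite places.
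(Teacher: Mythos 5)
Your overall strategy coincides with the paper's: apply $\mathrm{Det}$ to the distinguished triangle of Proposition \ref{proposition4.2}, trivialise the factors $R\Gamma_f(k_v,M_p)$ for $v\in S_p$ via the identity on the two copies of $(B_{cris}\otimes M_p)^{G_{k_v}}$ (this is exactly the paper's $\mathrm{Id}_{V_v,triv}$, and choosing it rather than the acyclicity trivialisation is what distinguishes $\vartheta^r_{F,S_p}$ from $\tilde{\vartheta}^r_{F,S_p}$ up to the Euler factor $\varepsilon_{S_p}(r)$), identify the archimedean factor with $\mathrm{Det}_{E_p[G]}(Y_r(F)^+\otimes E)^{\#}\otimes E_p$ via the comparison isomorphism $\alpha$, and feed the $K$-group into the global factor via a Chern class map. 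The local and archimedean steps are fine.

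The global step, however, contains a genuine error. You assert an isomorphism $H^1_f(O_{k,S_p},M_p)\cong (K_{1-2r}(F)\otimes E)^{\#}\otimes E_p$ and that ``the higher $f$-cohomologies vanish''. Both claims are false for $r<0$: the complex $R\Gamma_f(O_{k,S_p},M_p)$ is acyclic in degrees $0$ and $1$ (this mirrors $H^0_{\mathcal{M}}(M)=H^1_{\mathcal{M}}(M)=0$) and all of its content sits in degree $2$, where $H^2_f(O_{k,S_p},M_p)\cong H^1_f(O_{k,S_p},M_p^\vee(1))^\vee$ is the $E_p$-dual of a space of rank $[F:\mathbb{Q}]/2$, hence nonzero. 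The Soul\'e--Huber Chern class exists only for twists $\geq 2$, i.e.\ for the Kummer dual $M_p^\vee(1)$ (twist $1-r$), not for $M_p$ itself (twist $r<0$); the paper therefore first invokes Artin--Verdier duality to rewrite $\mathrm{Det}_{E_p[G]}R\Gamma_f(O_{k,S_p},M_p)$ as $\mathrm{Det}_{E_p[G]}\bigl(H^1_f(O_{k,S_p},M_p^\vee(1))^\vee\bigr)$ and only then applies $\mathrm{Det}(ch^\vee)$. Your version lands on the same object $\mathrm{Det}^{-1}_{E_p[G]}\bigl((K_{1-2r}(F)\otimes E)^{\#}\otimes E_p\bigr)$ only by coincidence of parity (a $\mathrm{Det}^{-1}$ from an odd-degree group versus a $\mathrm{Det}$ of a dual in even degree), but the isomorphism you use to get there does not exist, so the map $\vartheta^r_{F,S_p}$ is not actually constructed. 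Supplying the duality step $H^2_f(O_{k,S_p},M_p)\cong H^1_f(O_{k,S_p},M_p^\vee(1))^\vee$ and routing the Chern class through the dual motive repairs the argument and recovers the paper's proof.
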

\begin{proof}
The fundamental line is given by
$$\Xi(M):=\mathrm{Det}_{E[G]}(H^{0}_{\mathcal{M}}(M))\otimes \mathrm{Det}_{E[G]}(H^{1}_{\mathcal{M}}(M^{\vee}(1))^{\vee})\otimes \mathrm{Det}^{-1}_{E[G]}(M_{B}^{+})$$
We are interested in the motive $M=E(r)_{F}$. In that case $H^{0}_{\mathcal{M}}(M)=0$, $H^{1}_{\mathcal{M}}(M)=0$  and we have the following isomorphisms
\begin{itemize}
\item $M_{B}^{+}\otimes E_{p} \stackrel{\alpha}{\xrightarrow{\;\sim\;}} (\oplus_{v\in S_{\infty}}H^{0}(k_{v},M_{p}))$
\item The Chern map isomorphism $$H^{1}_{\mathcal{M}}(M^{\vee}(1))\otimes E_{p}\stackrel{ch}{\xrightarrow{\;\sim\;}}H^{1}_{f}(O_{k,S_{p}}, M_{p}^{\vee}(1)).$$
\end{itemize}
These two isomorphisms induce a third one
\begin{align*}
{(}\mathrm{Det}_{E[G]}(H^{1}_{\mathcal{M}}((M)^{\vee}(1))^{\vee})\otimes \mathrm{Det}^{-1}_{E[G]}(M_{B}^{+})\textbf{)}\otimes E_{p}\stackrel{\mathrm{Det}(ch^{\vee})\otimes \mathrm{Det^{-1}}(\alpha)}{\xrightarrow{\;\;\;\;\;\;\sim\;\;\;\;\;\;}}& \mathrm{Det}_{E_{p}[G]}(H^{1}_{f}(O_{k,S_{p}}, M_{p}^{\vee}(1))^{\vee})\\ &\otimes \mathrm{Det}^{-1}_{E_{p}[G]}(\oplus_{v\in S_{\infty}}R\Gamma(k_{v}, M_{p}))
\end{align*}
By Artin-Verdier duality we have
$$H^{i}_{f}(O_{k,S_{p}},M_{p})\simeq H^{3-i}_{f}(O_{k,S_{p}},M_{p}^{\vee}(1))^{\vee}$$
Thus, we can compute the cohomology of $R\Gamma_{f}(O_{k,S_{p}},M_p)$ in  terms of one degree (since degrees 0 and 1 are null in our case) and we have
$$\mathrm{Det}_{E_{p}[G]}R\Gamma_{f}(O_{k,S_{p}},M_p)=\mathrm{Det}_{E_{p}[G]}(H^{1}_{f}(O_{k,S_{p}}, M_{p}^{\vee}(1))^{\vee})$$
Let us write 
$$ \mathcal{J}_{S_p}: \mathrm{Det}_{E_{p}[G]}R\Gamma_{f}(O_{k,S_p}, M_p)\otimes \mathrm{Det}^{-1}_{E_{p}[G]} (\oplus_{v\in S_p}R\Gamma_{f}(k_{v}, M_p)\oplus \oplus_{v\in S_\infty}R\Gamma(k_{v}, M_p))\xrightarrow{\;\sim\;} \mathrm{Det}_{E_{p}[G]}R\Gamma_{c}(O_{k,S_p}, M_p)$$
for the isomorphism obtained from the distinguished triangle of Proposition \ref{proposition4.2}.
Since $r<0$, the complex $R\Gamma_{f}(k_{v}, M_p)$ is acyclic for all finite places (e.g. \cite{burnsFlach}, §2, after Lemma 1), and $\mathrm{Det}^{-1}_{E_{p}[G]} (\oplus_{v\in S_p}R\Gamma_{f}(k_{v}, M_p)=\otimes_{v\in S_p}\mathrm{Det}^{-1}_{E_{p}[G]} R\Gamma_{f}(k_{v}, M_p)$ maps to $E_{p}[G]$ in the following fashion (e.g. \cite{burnsFlach}, §2):\\
Write $V_{v}\rightarrow V_{v}$ for the complex $R\Gamma_{f}(k_{v}, M_p)$, then $\mathrm{Det}^{-1}_{E_{p}[G]} R\Gamma_{f}(k_{v}, M_p)$ maps to $E_{p}[G]$ via $$\mathrm{Id}_{V_{v}, triv}: \mathrm{det}^{-1}_{E_{p}[G]}V_{v}\otimes \mathrm{det}_{E_{p}[G]}V_{v} \xrightarrow{\;\sim\;} E_{p}[G].$$
and
$$\vartheta^{r}_{F, S_p}:= \mathcal{J}_{S_p}\circ(\mathrm{Det}(ch^{\vee})\otimes \mathrm{Det}^{-1}(\alpha)).$$
There is another way to map  $\Xi(E(r)_{F})\otimes E_p$ to $\mathrm{Det}_{E_p[G]}\mathrm{R\Gamma }_{c}(O_{k,S_p}, ((E(r)_{F}))_p)$ by mapping respective cohomology groups one by one (e.g. \cite{burnsFlach1}, §1.4, the remark after (1.16). This can be achieved as the following :\\
Since $R\Gamma_{f}(k_{v}, M_p)$ is acyclic and the complex $R\Gamma_{f}(O_{k,S_p}, M_p)$ is acyclic outside degree 2 ($r<0$), the exact sequence of Proposition  \ref{proposition4.2} gives
$$\left\{
\begin{array}{lr}

 H^{1}_{c}(O_{k,S_p}, M_p)\simeq \oplus_{v\in S_{\infty}}H^{0}(k_{v}, M_p)\\
 H^{2}_{c}(O_{k,S_p}, M_p)\simeq H^{2}_{f}(O_{k,S_p}, M_p)
\end{array}
\right.$$
which implies an isomorphism
$$ \tilde{\mathcal{J}}_{S_p}: \mathrm{Det}_{E_{p}[G]}R\Gamma_{f}(O_{k,S_p}, M_p)\otimes \mathrm{Det}^{-1}_{E_{p}[G]} (\oplus_{v\in S_\infty}R\Gamma(k_{v}, M_p))\xrightarrow{\;\sim\;} \mathrm{Det}_{E_{p}[G]}R\Gamma_{c}(O_{k,S_p}, M_p)$$
and this provides us with a new isomorphic map
$$\tilde{\vartheta}^{r}_{F, S_p}:\Xi(E(r)_{F})\otimes E_p\; \xrightarrow{\;\sim\;} \; \mathrm{Det}_{E_p[G]}\mathrm{R\Gamma }_{c}(O_{k,S_p}, M_p) $$
where $\tilde{\vartheta}^{r}_{F, S_p}:= \tilde{\mathcal{J}}_{S_p}\circ(\mathrm{Det}(ch^{\vee})\otimes \mathrm{Det}^{-1}(\alpha)),$
in which we implicitly use the map
$$\mathrm{det}_{E_{p}[G]}^{-1}H^{0}_{f}(k_{v}, M_p)\otimes \mathrm{det}_{E_{p}[G]}H^{1}_{f}(k_{v}, M_p) \widetilde{\rightarrow} E_{p}[G]$$
to get rid of $f$-cohomology.\\
Note that the maps $\vartheta^{r}_{F, S_p}$ and $\tilde{\vartheta}^{r}_{F, S_p}$ only differ by the way $\otimes_{v\in S_p}\mathrm{Det}^{-1}_{E_{p}[G]} R\Gamma_{f}(k_{v}, M_p)$ maps to $E_{p}[G]$. In fact one has (\cite{burnsFlach} , (11), (12))
$$\tilde{\vartheta}^{r}_{F, S_p}=\varepsilon_{S_p}(r).\vartheta^{r}_{F, S_p}$$
and $$\varepsilon_{S_p}(r)=\prod_{v\in S_p}(1-Nv^{-r}\mathrm{Frob}_{v}e_{I_{v}})\in (E[G])^{\times}.$$
where $\mathrm{Frob}_{v}$ is any representative in $G$ of the Frobenius map  associated to the finite place $v$, and $e_{I_{v}}$ is the idempotent of $\mathbb{Q}[G]$ which corresponds to the inertia subgroup $I_v$.\\
Note that $\varepsilon_{S_p}(r)$ is the inverse of the factor used in \cite{burnsFlach}, since we adopted a dual formulation of the ETNC compared to  Loc. cit.
$\Box$
\end{proof}
Let $\nu$ be any integer. We define the lattice
$$\mathcal{I}^{p}_{F, S_p}(r):=\mathrm{Det}_{O_p[G]}\mathrm{R\Gamma }_{c}(O_{k,S_p}, p^{\nu}T_p) \subset \mathrm{Det}_{E_p[G]}\mathrm{R\Gamma }_{c}(O_{k,S_p}, M_p).$$
The definition of $\mathcal{I}^{p}_{F, S_p}(r)$ doesn't depend on the choice of $\nu$ (e.g. \cite{burnsFlach}, Prop. 1.20) and we set
$$\Xi(E(r)_{F})_{O}:=\cap_p (\Xi(E(r)_{F})\cap (\vartheta^{r}_{F, S_p})^{-1}(\mathcal{I}^{p}_{F, S_p}(r))).$$
\subsubsection{The isomorphism  $\vartheta^{r}_{F, \infty}$}\label{4.2.2}
\begin{definition}
For any isomorphism of finitely generated $R$-modules $\phi : V\xrightarrow{\;\sim\;} W$, we let
$$\phi_{triv}: \mathrm{Det}^{-1}_{R}(V)\otimes \mathrm{Det}_{R}(W)\xrightarrow{\;\sim\;} R,$$
obtained by the following composition of isomorphisms
$$\mathrm{Det}^{-1}_{R}(V)\otimes \mathrm{Det}_{R}(W)\stackrel{\mathrm{Det}^{-1}_{R}(\phi)\otimes \mathrm{id}}{\xrightarrow{\;\;\;\;\sim\;\;\;\;}}\mathrm{Det}^{-1}_{R}(W)\otimes \mathrm{Det}_{R}(W)\tilde{\rightarrow} R.$$
\end{definition}
The Beilinson regulator map induces an isomorphism
$$\rho^{r}_{F}: K_{1-2r}(F)\otimes \mathbb{R}\;\xrightarrow{\;\sim\;}\; Y_{r}(F)^{+}\otimes \mathbb{R},$$
which defines the isomorphism
$$\vartheta^{r}_{F, \infty} := (({\rho ^{r}_{F}})^{\#})_{triv} : \Xi(\mathbb{Q}(r)_{F})\otimes \mathbb{R}\; \xrightarrow{\;\sim\;}\; \mathbb{R}[G]^{\#}.$$
For the motive $E(r)_F$, extension of scalars gives
$$\vartheta^{r}_{F, \infty}: \Xi(E(r)_{F})\otimes \mathbb{R}\; \xrightarrow{\;\sim\;}\; \mathbb{R}\otimes E[G]^{\#}.$$
\subsection{Statement of the ETNC}\label{4.3}
Let $S$ be any finite set of places of $k$ containing the set $S_\infty$. We write $L_{S}(E(r)_{F},s)$ for the $S$-truncated $\mathbb{C} [G]$-valued $L$-function of the motive $E(r)_{F}$.
\noindent Then with respect to the canonical identification $\mathbb{C} [G]=\prod_{\hat{G}}\mathbb{C}$
we have
$$L_{S}(E(r)_{F},s) = (L_{S}(s+r, \chi))_{\chi\in\hat{G}}.$$
\subsubsection*{Remark}
$$L^{*}(E(r)_{F},0):= L_{S_{\infty}}^{*}(E(r)_{F},0)\in \mathbb{R}[G]^{\times}$$\\
The statement of the Equivariant Tamagawa Number conjecture for the motive $E(r)_{F}$ is given as follows

\begin{conjecture}[Conjecture \ref{conjecture4.3} (ETNC)]\label{conjecture4.3}

One has 
\begin{conjecture}[Conjecture \ref{conjecture4.3.1}.1]\label{conjecture4.3.1}{\textit{(Rationality Conjecture)}}
$$(\vartheta^{r}_{F, \infty})^{-1}(L^{*}(E(r)_{F},0)^{-1}).E[G] \supseteq \Xi(E(r)_{F})$$
\end{conjecture}
Further, each of the following equivalent conjectures is true
\begin{conjecture}[Conjecture \ref{conjecture4.3.2}.2]\label{conjecture4.3.2}
$$O_{p}[G]\vartheta^{r}_{F, S_{p}} \circ (\vartheta^{r}_{F, \infty})^{-1} (L^{*}(E(r)_{F},0)^{-1})= \mathcal{I}^{p}_{F, S}(r)$$
\end{conjecture}
\begin{conjecture}[Conjecture \ref{conjecture4.3.3}.3]\label{conjecture4.3.3}
$$ O[G](\vartheta^{r}_{F, \infty})^{-1} (L^{*}(E(r)_{F},0)^{-1})=\Xi(E(r)_{F})_{O}$$
\end{conjecture}
\begin{conjecture}[Conjecture \ref{conjecture4.3.4}.4]\label{conjecture4.3.4}
$$O_{p}[G]\vartheta^{r}_{F, S_{p}} \circ (\vartheta^{r}_{F, \infty})^{-1} (L^{*}(E(r)_{F},0)^{-1})=\mathrm{Det}_{O_{p}[G]}\mathrm{R}\Gamma_{c}(O_{k,S_p}, T_p)$$
\end{conjecture}
\end{conjecture}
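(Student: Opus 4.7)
Conjecture \ref{conjecture4.3} has two logically distinct parts: the Rationality Conjecture \ref{conjecture4.3.1}, and the triplet \ref{conjecture4.3.2}, \ref{conjecture4.3.3}, \ref{conjecture4.3.4} of integrality statements asserted to be both equivalent and true. My plan is to address each part in turn, with the bulk of the substantive input provided by two external sources: the Beilinson rationality conjecture for the motive $E(r)_{F}$ over the imaginary quadratic $k$, and the recent proof of the ETNC in this setting by \cite{Leung}.

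For the Rationality Conjecture \ref{conjecture4.3.1}, the plan is to unwind $\vartheta^{r}_{F,\infty}$ using its explicit construction from the Beilinson regulator $\rho^{r}_{F}: K_{1-2r}(F)\otimes \mathbb{R} \xrightarrow{\sim} Y_{r}(F)^{+}\otimes \mathbb{R}$ of Subsection \ref{4.2.2}. Since $H^{0}_{\mathcal{M}}(E(r)_{F}) = 0$ and $H^{1}_{\mathcal{M}}(E(r)_{F}^{\vee}(1)) \simeq K_{1-2r}(F)\otimes E$ for $r<0$, the inclusion reduces, after transporting to $\mathrm{Det}^{-1}_{E[G]}(K_{1-2r}(F)\otimes E)^{\#}\otimes \mathrm{Det}_{E[G]}(Y_{r}(F)^{+}\otimes E)^{\#}$, to the assertion that $L^{*}(E(r)_{F},0)^{-1}$ lies in the $E[G]$-module generated by the regulator image of a $\mathbb{Q}$-rational basis of $K_{1-2r}(F)\otimes \mathbb{Q}$. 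This equivariant rationality of the leading term is the Beilinson conjecture for motives over $k$, which is known in our setting through the explicit description of the regulator via elliptic polylogarithms (Deninger--Beilinson, and in its equivariant refinement Kings); I would invoke it here as a cited input.

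For the triplet \ref{conjecture4.3.2}--\ref{conjecture4.3.4}, the equivalences are formal. First, \ref{conjecture4.3.2} $\Leftrightarrow$ \ref{conjecture4.3.4} is immediate from the definition $\mathcal{I}^{p}_{F,S_{p}}(r) := \mathrm{Det}_{O_{p}[G]}R\Gamma_{c}(O_{k,S_{p}}, p^{\nu}T_{p})$, which is independent of $\nu$: specialising at $\nu=0$ identifies $\mathcal{I}^{p}_{F,S_{p}}(r)$ with $\mathrm{Det}_{O_{p}[G]}R\Gamma_{c}(O_{k,S_{p}}, T_{p})$, so the right-hand sides of the two formulations coincide. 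Next, \ref{conjecture4.3.3} $\Leftrightarrow$ (\ref{conjecture4.3.2} for every prime $p$): setting $\xi := (\vartheta^{r}_{F,\infty})^{-1}(L^{*}(E(r)_{F},0)^{-1})$, the equality $O[G]\xi = \Xi(E(r)_{F})_{O}$ of $O[G]$-lattices in a finite-dimensional $E[G]$-module can be tested after $\otimes_{\mathbb{Z}}\mathbb{Z}_{p}$ at every prime $p$ (local-global principle for submodules over the Dedekind ring $O$). By the defining intersection $\Xi(E(r)_{F})_{O}=\bigcap_{p}\bigl(\Xi(E(r)_{F}) \cap (\vartheta^{r}_{F,S_{p}})^{-1}(\mathcal{I}^{p}_{F,S_{p}}(r))\bigr)$, applying $\vartheta^{r}_{F,S_{p}}\otimes O_{p}$ converts the $p$-adic version of this equality into the statement of \ref{conjecture4.3.2}.

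Finally, the truth of any one (hence, by the equivalences above, all) of \ref{conjecture4.3.2}--\ref{conjecture4.3.4} is the main result of \cite{Leung}, which establishes the ETNC for abelian extensions of imaginary quadratic fields at every strictly negative integer $r$, using the equivariant main conjecture of Iwasawa theory for such fields (built on Rubin's theorem for elliptic units). I would invoke this theorem as the key input. The main obstacle — and the substantive mathematical content of the whole statement — is therefore Leung's theorem together with the Beilinson-regulator computation underlying Step 1; the remainder of the proposed proof is formal manipulation of determinant functors with a standard local-global argument.
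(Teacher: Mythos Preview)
There is a category error here: Conjecture~\ref{conjecture4.3} is \emph{stated as a conjecture} in the paper, not as a theorem with a proof. The paper does not purport to establish it; rather, it formulates the ETNC for the motive $E(r)_{F}$, records (as a standard fact from the literature) that the integral variants \ref{conjecture4.3.2}.2--\ref{conjecture4.3.4}.4 are equivalent, and then studies \emph{consequences} of the conjecture (Theorems~\ref{theorem4.4} and~\ref{theorem4.8}). When the paper actually invokes the truth of ETNC, it does so only under the explicit hypotheses that $p$ is split in $k$ and $p\nmid 6\cdot\#G$, quoting \cite{Leung}; see Corollaries~\ref{corollary4.9} and~\ref{corollary4.10}.

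Your attempt to prove the conjecture outright therefore goes beyond what the paper claims, and it does so by overstating the available inputs. The assertion that \cite{Leung} ``establishes the ETNC for abelian extensions of imaginary quadratic fields at every strictly negative integer $r$'' without qualification is incorrect: Leung's theorem carries the restrictions on $p$ just mentioned, so it does not verify \ref{conjecture4.3.2}.2 or \ref{conjecture4.3.4}.4 for \emph{all} primes $p$, and hence your local--global argument cannot conclude the global equality \ref{conjecture4.3.3}.3. Likewise, the Rationality Conjecture~\ref{conjecture4.3.1}.1 is treated by the paper as a genuine hypothesis (cf.\ the conditional Corollary~\ref{corollary4.5}); while there is indeed work of Deninger and Kings on elliptic polylogarithms relevant here, the paper does not claim it yields the equivariant $E[G]$-rationality statement needed, and you would have to supply a precise reference matching that refinement rather than invoke it as folklore. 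Your formal argument for the mutual equivalence of \ref{conjecture4.3.2}.2, \ref{conjecture4.3.3}.3, \ref{conjecture4.3.4}.4 is the standard one and is fine, but that equivalence is the only part of Conjecture~\ref{conjecture4.3} that is actually unconditional.
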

\begin{remarks}
\item[1] In Rationality Conjecture (Conjecture \ref{conjecture4.3.1}.1), the inclusion is inverted compared to the one in the Rationality Conjecture stated in (\cite{burnsGreither}, Conjecture 3.1). In fact, as we mentioned above, we used the parity convention  of \cite{KM}. Our formulation is then dual to that in \cite{burnsGreither} but are both equivalent nonetheless.
\item[2] The Conjecture \ref{conjecture4.3} is compatible with enlarging $S_p$ to any finite set of primes containing the infinite primes (e.g. \cite{LeungKings}, Lemma 2.3).
\item[3] Conjecture \ref{conjecture4.3.1}.1 is equivalent to Stark's conjecture for $r=0$ and $E=\mathbb{Q}$, it is also equivalent to the central conjecture formulated by Gross in \cite{Gross1}.
\item[4] Each  of the equalities in Conjectures \ref{conjecture4.3.2}.2, \ref{conjecture4.3.3}.3 and \ref{conjecture4.3.4}.4 is equivalent to the "Lifted Root Number Conjecture" formulated by Gruenberg \& al \cite{GrunenbergAl} but only when $E=\mathbb{Q}$.
\end{remarks}
 \subsection{Burns-De jeu-Gangl's conjecture and ETNC in the case of an imaginary quadratic base field}
Let us work first with the non integral case (Conjecture \ref{conjecture4.3.1}.1). We have the following
\begin{theorem}\label{theorem4.4}
Let $F/k$ be an abelian extension over the imaginary quadratic number field $k$. Suppose that $E$ is a number field containing all values of characters $\chi$ of $G:=\mathrm{Gal}(F/k)$ and that $r$ is a stricty negative integer. We make the identification $e_{\chi}E\cong E$.\\
Then the following statements are equivalent
\begin{itemize}
\item[1]Conjecture \ref{conjecture4.3.1}.1 holds for the pair $(E(r)_{F}, E[G])$.
\item[2]For each character $\chi$ of $G$ there exists an element $\tilde{\epsilon}_{\chi}(F)\in e_{\chi}(K_{1-2r}(F)\otimes E)$ which verifies
$$\rho_{F}^{r}(\tilde{\epsilon}_{\chi}(F)) = L^{'}(r,\chi^{-1})$$
where the Beilinson Regulator $\rho_{F}^{r}$ is defined here over $K_{1-2r}(F)\otimes E$ by extension of scalars.
\end{itemize}
\end{theorem}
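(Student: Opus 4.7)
The plan is to prove the equivalence by decomposing both sides of Conjecture \ref{conjecture4.3.1}.1 into $\chi$-isotypic components. Since $E$ contains all character values, the primitive idempotents $e_\chi \in E[G]$ give a ring isomorphism $E[G] \cong \prod_\chi E$, under which the fundamental line $\Xi(E(r)_F)$, the leading term $L^*(E(r)_F, 0)$, and the isomorphism $\vartheta^r_{F, \infty}$ all decompose. Both $(\vartheta^r_{F, \infty})^{-1}(L^*(E(r)_F, 0)^{-1}) \cdot E[G]$ and $\Xi(E(r)_F)$ are invertible $E[G]$-submodules of the same ambient $\mathbb{R}[G]$-space, so the containment $\supseteq$ is equivalent, $\chi$-by-$\chi$, to an equality of one-dimensional $E$-lines. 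The theorem thus reduces to a single scalar identity per character.

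Fix a character $\chi$ of $G$. By Lemma \ref{lemma3.1} the map $\iota$ identifies $e_\chi(Y_r(F)^+ \otimes E)$ with $E$, and since the Beilinson regulator is an $\mathbb{R}[G]$-isomorphism after $\otimes \mathbb{R}$, $e_\chi(K_{1-2r}(F) \otimes E)$ is a one-dimensional $E$-vector space. Choose an $E$-generator $\alpha$ of $e_{\chi^{-1}}(K_{1-2r}(F) \otimes E)$ and the $\iota$-generator $\beta$ of $e_{\chi^{-1}}(Y_r(F)^+ \otimes E)$; using the identity $e_\chi(X)^{\#} = e_{\chi^{-1}}(X)$, the element $\xi_\chi := \alpha^{-1} \otimes \beta$ is an $E$-generator of $e_\chi \Xi(E(r)_F)$. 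Unwinding the trivialization construction of Section \ref{4.2.2}, one computes $\vartheta^r_{F, \infty}(\xi_\chi) = d_\chi^{-1}$, where $d_\chi \in \mathbb{R}$ is defined by $\rho^r_F(\alpha) = d_\chi \beta$. Tracking the $\#$-involution, which swaps $\chi$ and $\chi^{-1}$, the $\chi$-component of $L^*(E(r)_F, 0)^{-1} \in \mathbb{R}[G]^{\#}$ corresponds to $L^*(r, \chi^{-1})^{-1} = L'(r, \chi^{-1})^{-1}$, using that $k$ is imaginary quadratic and $r<0$. The rationality condition in the $\chi$-component therefore reads $d_\chi^{-1} \in L'(r, \chi^{-1})^{-1} E^\times$, i.e.\ $\rho^r_F(\alpha) \in L'(r, \chi^{-1}) E^\times \beta$.

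Rescaling $\alpha$ within its $E$-line yields an element $\tilde{\alpha} \in e_{\chi^{-1}}(K_{1-2r}(F) \otimes E)$ with $\rho^r_F(\tilde{\alpha}) = L'(r, \chi^{-1})$; applying the bijection $\chi \leftrightarrow \chi^{-1}$ on characters relabels this as the element $\tilde{\epsilon}_\chi(F)$ of statement 2, and injectivity of $\rho^r_F$ on $e_\chi(K_{1-2r}(F) \otimes E)$ forces uniqueness. Conversely, given a family of such $\tilde{\epsilon}_\chi(F)$, the reverse of this calculation produces an $E$-generator of the $E[G]$-module forced by the rationality statement, thereby establishing Conjecture \ref{conjecture4.3.1}.1.

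The main obstacle will be the careful bookkeeping of conventions: verifying that after composing with $\iota$ and unwinding the several $\mathrm{Det}$-trivializations, the $\chi$-component of $L^*(E(r)_F, 0)$ truly corresponds to $L'(r, \chi^{-1})$ with no stray transcendental factor, so that the remaining $E^\times$-scaling ambiguity is absorbed exactly by rescaling $\tilde{\epsilon}_\chi(F)$ within its one-dimensional $E$-space. The explicit formula of Proposition \ref{proposition3.2} is reassuring: the rational constants $w_{1-r}(F^{\mathrm{ker}(\chi)})$, $\chi^{-1}(\sigma)$, and $|G|$ that appear in the regulator image of $\epsilon_\sigma(\chi, S)$ all lie in $E^\times$, so there is no obstruction beyond this conventions check.
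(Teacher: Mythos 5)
Your proposal is correct and follows essentially the same route as the paper's proof: both decompose Conjecture \ref{conjecture4.3.1}.1 over the idempotents $e_{\chi}$, use the imaginary quadratic hypothesis to identify $Y_{r}(F)^{+}\otimes E$ with $E[G]^{\#}$ and $L^{*}$ with $L^{'}$, exploit the one-dimensionality of $e_{\chi}(K_{1-2r}(F)\otimes E)$, and unwind the trivialization $(\rho^{r}_{F})_{triv}$ to reduce to the scalar condition $L^{'}(r,\chi^{-1})E\subseteq \rho^{r}_{F}(e_{\chi}(K_{1-2r}(F)\otimes E))$. Your version is merely more explicit about generators (the scalar $d_{\chi}$), and the $\chi$ versus $\chi^{-1}$ bookkeeping you flag as the remaining obstacle is handled in the paper exactly as you anticipate, via the $\#$-involution sending $e_{\chi}$ to $e_{\chi^{-1}}$.
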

\begin{proof}
Conjecture \ref{conjecture4.3.1}.1  reads 
$$(\vartheta^{r}_{F, \infty})^{-1}(L^{*}(E(r)_{F},0)^{-1}).E[G] \supseteq \Xi(E(r)_{F})$$
where $$\Xi(E(r)_{F}=\mathrm{Det}^{-1}_{E[G]}(K_{1-2r}(F)\otimes E)^{\#}\otimes \mathrm{Det}_{E[G]}(Y_{r}(F)^{+}\otimes E)^{\#}.$$

By  § \ref{4.2.2} this is equivalent to
$$ L^{*}(E(r)_{F},0)^{-1}.E[G]\supseteq ((\rho^{r}_{F})^{\#})_{triv}(\mathrm{Det}^{-1}_{E[G]}(K_{1-2r}(F)\otimes E)^{\#}\otimes\mathrm{Det}_{E[G]}(Y_{r}(F)^{+}\otimes E)^{\#})$$
Suppose now that $k$ is totally imaginary. Then $L^{*}(E(r)_{F},0)=L^{'}(E(r)_{F},0)$ and we canonically identify
$$Y_{r}(F)^{+}\otimes E\cong E[G]^{\#}.$$
This identification sends  for each $\sigma\in G$, seen as an embedding $\sigma:F\rightarrow \mathbb{C}$, the element $(0,..,(2\pi i)^{-r}a_{\sigma},..,(2\pi i)^{-r}a_{\tau\sigma},..,0)$ (with $a_{\sigma}=\pm a_{\tau\sigma})$ to $a_{\sigma}\sigma^{-1}$.
Hence
$$\mathrm{Det}_{E[G]}(Y_{r}(F)^{+}\otimes E)^{\#}=E[G]$$
Therefore, Conjecture \ref{conjecture4.3.1}.1 is equivalent to
$$ (L^{*}(E(r)_{F},0)^{-1})^{\#}E[G]\supseteq (\rho^{r}_{F})_{triv}(\mathrm{Det}^{-1}_{E[G]}(K_{1-2r}(F)\otimes E)\otimes E[G])$$
Let $\chi\in\mathrm{Hom}(G,\mathbb{C})$ be a character of $G$ and $e_{\chi}$ the corresponding idempotent. Since $E$ contains all values of such characters of $G$, we get $e_{\chi}E[G] = e_{\chi}E$. In the following we use the identification $e_{\chi}E\cong E$. \\
By § \ref{4.3} one can also identify  $L^{'}(E(r)_{F},0)=\Sigma_{\chi\in\mathrm{Hom}(G,\mathbb{C}}L^{'}(r,\chi)e_{\chi}$. Conjecture \ref{conjecture4.3.1}.1 is then equivalent to the following being true for all characters of $G$
$$ L^{'}(r, \chi^{-1})^{-1}E \supseteq (\rho^{r}_{F})_{triv}(\mathrm{Det}^{-1}_{E}(e_{\chi}(K_{1-2r}(F)\otimes E))\otimes E)$$
The number fields $k$ is totally imaginary, thus by Beilinson regulator
$$(K_{1-2r}(F)\otimes E)\otimes \mathbb{R}\simeq E[G]\otimes \mathbb{R}$$
The $E$-vector space $e_{\chi}(K_{1-2r}(F)\otimes E)$ is then necessarily of dimension 1 and  the restriction of $(\rho^{r}_{F})_{triv}$ to the $\chi$-eigenspaces is defined as follows
\begin{align*}
(\rho^{r}_{F})_{triv}&: \mathrm{Det}^{-1}_{E}(e_{\chi}(K_{1-2r}(F)\otimes E))\otimes E \rightarrow E\\
&:\mathrm{Hom}(e_{\chi}(K_{1-2r}(F)\otimes E), E)\otimes E\stackrel{(({\rho^{r}_{F}})^{\vee})^{-1}\otimes Id_{E}}{\rightarrow} E
\end{align*}
It follows that Conjecture \ref{conjecture4.3.1}.1 is then equivalent to the following being true for all characters of $G$
$$ L^{'}(r, \chi^{-1})E \subseteq \rho^{r}_{F}(e_{\chi}(K_{1-2r}(F)\otimes E))$$
which means there exists for each character $\chi$ of $G$ an element $\tilde{\epsilon}_{\chi}(F)\in e_{\chi}(K_{1-2r}(F)\otimes E)$ which verifies
$$\rho^{r}_{F}(\tilde{\epsilon}_{\chi}(F)) = L^{'}(r,\chi^{-1}).$$

$\Box$
\end{proof}
\begin{corollary}\label{corollary4.5}
Let $F/k$ be a finite abelian extension with $k$ imaginary quadratic. Suppose that $r<0$ and that $S$ is a finite set of places of $k$ containing the set of infinites places. Let $E$ be a number field which contains all values of characters $\chi$ of $G$, and $O$ be its ring of integers.\\
If Conjecture \ref{conjecture4.3.1}.1 holds for the pair $(E(r)_{F}, E[G])$, then, the first statement of Conjecure \ref{conjecture3.1} holds for the set of data $(F/k, r, S, O, p)$, for all rational primes $p$.
\end{corollary}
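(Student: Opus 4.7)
The plan is to reduce the first statement of Conjecture \ref{conjecture3.1} to Theorem \ref{theorem4.4} and then bridge the gap between $L'(r,\chi^{-1})$ and $L_{S}'(r,\chi^{-1})$ via Euler factors.

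First, I would invoke Theorem \ref{theorem4.4}: since Conjecture \ref{conjecture4.3.1}.1 holds for $(E(r)_F,E[G])$, for every character $\chi$ of $G$ there exists $\tilde{\epsilon}_{\chi}(F)\in e_{\chi}(K_{1-2r}(F)\otimes E)$ with
$$\rho_{F}^{r}(\tilde{\epsilon}_{\chi}(F))=L^{'}(r,\chi^{-1}).$$
Since $\mathcal{T}=\rho_{F}^{r}(K_{1-2r}(F))$ and $E$ is a flat $\mathbb{Z}$-module, one has $\rho_{F}^{r}(K_{1-2r}(F)\otimes E)=\mathcal{T}\otimes E$. Reading the equality above inside $e_{\chi}(\mathbb{R}[G]\otimes E)\cong E\otimes \mathbb{R}$ under the identification $e_{\chi}E\cong E$, this yields that $L^{'}(r,\chi^{-1})e_{\chi}\in \mathcal{T}\otimes E$.

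Next, I would relate $L_{S}^{'}(r,\chi^{-1})$ to $L^{'}(r,\chi^{-1})$. Recall that $L_{S}(s,\chi^{-1})=L(s,\chi^{-1})\prod_{v\in S_{f},\,v\,\mathrm{unram}\,\mathrm{in}\,\chi}(1-\chi^{-1}(v)Nv^{-s})$. For $k$ imaginary quadratic and $r<0$ one has $L(r,\chi^{-1})=0$, so differentiating the Leibniz expansion gives
$$L_{S}^{'}(r,\chi^{-1})=L^{'}(r,\chi^{-1})\prod_{v\in S_{f},\,v\,\mathrm{unram}\,\mathrm{in}\,\chi}(1-\chi^{-1}(v)Nv^{-r}).$$
The finite product on the right lies in $E$ (in fact in $E^{\times}$, since for $r<0$ one has $|Nv^{-r}|\geq 2>1\geq |\chi^{-1}(v)|$, so no factor vanishes). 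Therefore $L_{S}^{'}(r,\chi^{-1})=c_{\chi}L^{'}(r,\chi^{-1})$ for some $c_{\chi}\in E^{\times}$, and
$$L_{S}^{'}(r,\chi^{-1})e_{\chi}=c_{\chi}\,L^{'}(r,\chi^{-1})e_{\chi}\in \mathcal{T}\otimes E.$$

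Finally, the remaining scalars $w_{1-r}(F^{\mathrm{ker}(\chi)})$, $|G|$ and $\chi^{-1}(\sigma)$ all lie in $E$, so multiplying preserves membership in the $E$-vector space $\mathcal{T}\otimes E$. This gives
$$w_{1-r}(F^{\mathrm{ker}(\chi)})L_{S}^{'}(r,\chi^{-1})\chi^{-1}(\sigma)|G|e_{\chi}\in \mathcal{T}\otimes E,$$
which is precisely the first statement of Conjecture \ref{conjecture3.1}. Since this conclusion does not involve the prime $p$, it holds for every rational prime $p$, proving Corollary \ref{corollary4.5}.

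The only real subtlety is the bookkeeping of identifications ($e_{\chi}E\cong E$, $\mathcal{T}\otimes E\subset \mathbb{R}[G]\otimes E$, and the canonical target of $\rho_{F}^{r}$); no deep computation is needed beyond Theorem \ref{theorem4.4} and the Euler-factor correction between $L$ and $L_{S}$.
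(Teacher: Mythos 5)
Your proposal is correct and follows essentially the same route as the paper: invoke Theorem \ref{theorem4.4} to place $L^{'}(r,\chi^{-1})e_{\chi}$ (and hence the full element $w_{1-r}(F^{\mathrm{ker}(\chi)})L^{'}(r,\chi^{-1})\chi^{-1}(\sigma)\mid G\mid e_{\chi}$) in $\mathcal{T}\otimes E$, then pass from $L^{'}$ to $L_{S}^{'}$ via the Euler factors $\prod_{v\in S\setminus S_{\infty}}(1-Nv^{-r}\chi^{-1}(v))$, which lie in $E$ and therefore preserve membership in the $E$-vector space $\mathcal{T}\otimes E$. Your added remarks (flatness of $E$ over $\mathbb{Z}$ to identify $\rho_{F}^{r}(K_{1-2r}(F)\otimes E)=\mathcal{T}\otimes E$, and the nonvanishing of the Euler factors) only make explicit what the paper leaves implicit.
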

\begin{proof}
Recall that $\mathcal{T}$ denotes the complete $\mathbb{Z}$-lattice in $\mathbb{R}[G]$, which is axactly the image of $K_{1-2r}(F)$ by the Beilinson regulator.\\
If Conjecture \ref{conjecture4.3.1}.1  holds for the pair $(E(r)_{F}, E[G])$, then, Theorem \ref{theorem4.4}, shows that the element $w_{1-r}(F^{\mathrm{ker}(\chi)})L^{'}(r,\chi^{-1})\chi^{-1}(\sigma)\mid{G}\mid e_{\chi}$ belongs to $\mathcal{T}\otimes E$.\\
Consequently, the first statement of Conjecure \ref{conjecture3.1} holds for the set of data $(F/k, r, S_\infty, O, p)$, for all rational primes $p$. But, this also means that the first statement of Conjecure 3.1 holds for the set of data $(F/k, r, S, O, p)$, for all rational primes $p$ and any finite set of places of $k$ containing the infinite places since
$$L^{'}_{S}(r,\chi)=\prod_{v\in S\backslash  S_\infty}(1-Nv^{-r}\chi^{-1}(v))L^{'}_{S_\infty}(r,\chi)=\prod_{v\in S\backslash S_\infty}(1-Nv^{-r}\chi^{-1}(v))L^{'}(-r,\chi)$$
and $\prod_{v\in S\backslash S_\infty}(1-Nv^{-r}\chi^{-1}(v))\in O$.\hspace*{\fill}$\Box$
\end{proof}
Let us return now to the integral case using conjecture \ref{conjecture4.3.4}.4 for more results. We need to introduce some results first.\\
In the rest of the paper we suppose that the integer $r$ is always strictly negative and we fix $T_p =O_{p}[G](r)$.
\begin{lemma}\label{lemma4.6}
Let $p$ be a rational prime and $S$ a finite set of places of $k$ containing the infinite places, the $p$-places and the places which ramify in $F/k$.\\
Suppose that either $p$ is odd or $k$ is totally imaginary.
There is a distinguished triangle 
\begin{equation}\label{1}
R\Gamma_{c}(O_{k,S}, T_{p})\rightarrow R\Gamma(O_{k,S}, T_{p}^{\vee}(1))^{\vee}[-3]\rightarrow (\prod_{\sigma:k\rightarrow \mathbb{C}}T_{p})^{+}[0]
\end{equation}
\end{lemma}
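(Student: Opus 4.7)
The plan is to combine the definition of $R\Gamma_c$ given in the paper with the Artin--Verdier duality, using Lemma~\ref{lemma4.1} to isolate the archimedean correction. Split $S = S_f \cup S_\infty$ and introduce the auxiliary ``finite-part'' variant
$$R\widetilde{\Gamma}_c(O_{k,S},T_p) := \mathrm{Cone}\Bigl(R\Gamma(O_{k,S},T_p) \to \bigoplus_{v \in S_f} R\Gamma(k_v,T_p)\Bigr)[-1].$$
Applying Lemma~\ref{lemma4.1} to the factorization
$$R\Gamma(O_{k,S},T_p) \longrightarrow \bigoplus_{v \in S} R\Gamma(k_v,T_p) \xrightarrow{\;\pi\;} \bigoplus_{v \in S_f} R\Gamma(k_v,T_p),$$
in which $\pi$ is the projection onto the $S_f$-summand whose kernel is the archimedean part $\bigoplus_{v \in S_\infty} R\Gamma(k_v,T_p)$, and shifting the resulting distinguished triangle by $-1$, one obtains
$$R\Gamma_c(O_{k,S},T_p) \longrightarrow R\widetilde{\Gamma}_c(O_{k,S},T_p) \longrightarrow \bigoplus_{v \in S_\infty} R\Gamma(k_v,T_p).$$

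Next, invoke the Artin--Verdier (Poitou--Tate) duality in the form valid under the hypothesis that $p$ is odd or $k$ is totally imaginary (see e.g.\ \cite{milne}, Ch.~II, or \cite{nekovar}): this supplies a canonical quasi-isomorphism
$$R\widetilde{\Gamma}_c(O_{k,S},T_p) \xrightarrow{\;\sim\;} R\Gamma(O_{k,S},T_p^{\vee}(1))^{\vee}[-3].$$
Under the same parity hypothesis, the higher cohomology of $\mathrm{Gal}(\overline{k_v}/k_v)$ with coefficients in the $\mathbb{Z}_p$-module $T_p$ vanishes for every archimedean $v$ (either because $k_v=\mathbb{C}$ and the group is trivial, or because $k_v=\mathbb{R}$ and the order of the Galois group is invertible in $O_p$), so $R\Gamma(k_v,T_p) = H^0(k_v,T_p)[0]$. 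Grouping the embeddings $\sigma : k \to \mathbb{C}$ by archimedean place and taking invariants under the action of complex conjugation gives the identification
$$\bigoplus_{v \in S_\infty} R\Gamma(k_v,T_p) \cong \Bigl(\prod_{\sigma : k \to \mathbb{C}} T_p\Bigr)^{+}[0].$$
Substituting the last two displays into the triangle of the previous paragraph yields the claimed distinguished triangle.

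The main obstacle is the Artin--Verdier step, which requires care with conventions: the duality most naturally produces the finite-only complex $R\widetilde{\Gamma}_c$ rather than the paper's $R\Gamma_c$, and the parity hypothesis is exactly what is needed to ensure that at archimedean places the standard étale cohomology coincides with the modified (Tate) cohomology implicit in the duality statement. Once this input is in hand, the remaining manipulations are formal consequences of Lemma~\ref{lemma4.1} applied to the projection $\pi$, together with the elementary identification of $H^0$ at archimedean places.
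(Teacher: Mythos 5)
Your proposal is correct and follows essentially the same route as the paper: the paper likewise applies Lemma \ref{lemma4.1} to the factorization of the localization map through the projection $\bigoplus_{v\in S}R\Gamma(k_v,T_p)\to\bigoplus_{v\in S_f}R\Gamma(k_v,T_p)$, identifies the cone over the finite places with $R\Gamma(O_{k,S},T_p^{\vee}(1))^{\vee}[-3]$ via the Poitou--Tate nine-term sequence (together with the comparison $R\Gamma(O_{k,S},T_p^{*}(1))^{*}\cong R\Gamma(O_{k,S},T_p^{\vee}(1))^{\vee}$ from \cite{burnsFlach}), and identifies the archimedean term with $(\prod_{\sigma:k\rightarrow\mathbb{C}}T_p)^{+}[0]$ under the stated parity hypothesis. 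Your auxiliary complex $R\widetilde{\Gamma}_c$ is exactly the intermediate cone the paper manipulates implicitly, so the two arguments coincide.
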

\begin{proof}
The distinguished triangle \ref{1} is the same as the one given in (\cite{burnsGreither}, §3.4). It arises in the following way: The exact sequence (Poitou-Tate global duality, e.g.\cite{milne})
$$
 \xymatrix{
&0 \ar[r] &H^{0}(O_{k,S}, T_p) \ar[r] &\oplus_{v\in S_{f}}H^{0}(k_{v}, T_{p})\oplus \oplus_{v\in S_{\infty}}\hat{H}^{0}(k_{v}, T_{p}) \ar[r] &H^{2}(O_{k,S}, T_{p}^{*}(1))^{*} \ar[d] & \\
&  &H^{1}(O_{k,S}, T_{p}^{*}(1))^{*} \ar[d]  &\oplus_{v\in S}H^{1}(k_{v}, T_{p})\ar[l] &H^{1}(O_{k,S}, T_{p}) \ar[l] & \\
&  &H^{2}(O_{k,S}, T_{p}) \ar[r]  &\oplus_{v\in S}H^{2}(k_{v}, T_{p})\ar[r] &H^{0}(O_{k,S}, T_{p}^{*}(1))^{*} \ar[r] &0
}
$$
where $(){*}$ denotes the Pontryagin dual, and $S_f$ the subset of finite places inside $S$.\\
 The sequence above gives a distinguished triangle whenever $p$ is odd or $k$ is totally imaginary
$$R\Gamma(O_{k,S}, T_p)\rightarrow \oplus_{v\in S_{f}}R\Gamma(k_{v}, T_{p})\rightarrow R\Gamma(O_{k,S}, T_p^{*}(1))^{*}[-2]$$
By (\cite{burnsFlach}, Lemma 16 (b)), one can see that
$$R\Gamma(O_{k,S}, T_{p}^{*}(1))^{*}\cong R\Gamma(O_{k,S}, T_{p}^{\vee}(1))^{\vee}$$
this along with the application of Lemma 4.1 to the (non distinguished) triangle
$$R\Gamma(O_{k,S}, T_p)\stackrel{f}{\rightarrow }\oplus_{v\in S}R\Gamma(k_{v}, T_{p})
\stackrel{g}{\rightarrow }\oplus_{v\in S_{f}}R\Gamma(k_{v}, T_{p})$$
gives the triangle \ref{1}.\\
$\Box$
\end{proof}
By Lemma \ref{lemma4.6} we get an isomorphism
\begin{equation}\label{2}
\mathrm{Det}_{O_{p}[G]}R\Gamma(O_{k,S}, T_{p}^{\vee}(1))^{\vee}[-3]\otimes \mathrm{Det}^{-1}_{O_{p}[G]}(\prod_{\sigma:k\rightarrow \mathbb{C}}T_{p})^{+}[0]\stackrel{\Im}{\rightarrow}\mathrm{Det}_{O_{p}[G]}R\Gamma_{c}(O_{k,S}, T_p)
\end{equation}
The isomorphism $\Im \otimes \mathrm{Id}_{E_p}$  explicitly maps the following cohomology groups
$$\left\{
\begin{array}{lr}

 H^{1}_{c}(O_{k,S}, M_p)\simeq \oplus_{v\in S_{\infty}}H^{0}(k_{v}, M_p)\\
 H^{2}_{c}(O_{k,S}, M_p)\simeq H^{1}(O_{k,S}, M_p^{\vee}(1))^{\vee}
\end{array}
\right.$$
Let us go even further: If we apply Lemma \ref{lemma4.1} to the (non exact) triangle
$$\oplus_{v\in S_{f}}R\Gamma_{f}(k_{v}, M_{p})\oplus\oplus _{v\in S_{\infty}}R\Gamma(k_{v}, M_{p})\stackrel{f}{\rightarrow }\oplus_{v\in S_{\infty}}R\Gamma(k_{v}, M_{p})\stackrel{g}{\rightarrow }R\Gamma_{c}(O_{k,S}, M_p)[1]$$
we get a distinguished triangle
$$\oplus_{v\in S_f}R\Gamma_{f}(k_{v}, M_{p})\rightarrow R\Gamma_{f}(O_{k,S}, M_p)\rightarrow R\Gamma(O_{k,S}, M_{p}^{\vee}(1))^{\vee}[-3]$$
This latter shows that the cohomology of $R\Gamma(O_{k,S}, M_{p}^{\vee}(1))^{\vee}[-3]$ 
identifies with $f$-cohomology given that $R\Gamma_{f}(k_{v}, M_{p})$ is acyclic
\begin{equation}\label{5}
H^{1}(O_{k,S}, M_p^{\vee}(1))^{\vee}\simeq H^{2}_{f}(O_{k,S}, M_p)
\end{equation}
A second isomorphism then ensues
\small{
\begin{equation}\label{3}
\begin{small}
\end{small}
\mathrm{Det}_{E_{p}[G]}R\Gamma(O_{k,S}, M_{p}^{\vee}(1))^{\vee}[-3]\otimes \mathrm{Det}^{-1}_{E_{p}[G]}(\prod_{\sigma:k\rightarrow \mathbb{C}}M_{p})^{+}[0]\stackrel{\mho}{\rightarrow}\mathrm{Det}_{E_{p}[G]}R\Gamma_{f}(O_{k,S}, M_{p})\otimes\otimes_{v\in S_{\infty}} (\mathrm{Det}_{E_{p}[G]}^{-1}R\Gamma(k_{v}, M_{p}))
\end{equation}
}
\normalsize{To} take the enlarged set of primes $S$ into account in the formulation of the ETNC we recall the distinguished triangle (e.g. \cite{LeungKings}, Lemma 2.3)
\begin{equation}
R\Gamma_{c}(O_{k, S}, M_{p})\rightarrow R\Gamma_{c}(O_{k, S_{p}}, M_{p}) \rightarrow
\oplus_{v\in S_{f}\backslash S_{p}} R\Gamma_{f}(k_{v}, M_{p})
\label{6}
\end{equation}


One gets then an isomorphism
\begin{equation}
\mathrm{Det}_{E_{p}[G]}R\Gamma_{c}(O_{k, S}, M_{p})\stackrel{\tilde{\mathcal{r}}}{\rightarrow}\mathrm{Det}_{E_{p}[G]}R\Gamma_{c}(O_{k, S_{p}}, M_{p})
\label{4}
\end{equation}
We use the results ((\ref{1}),..,(\ref{4})) above to define the diagram in the proposition below:
\begin{proposition}\label{proposition4.7}
The following diagram is commutative (all arrows are isomorphisms)
$$
\xymatrixcolsep{2pc}\xymatrix{
&\mathrm{Det}_{E_{p}[G]}R\Gamma(O_{k,S}, M_{p}^{\vee}(1))^{\vee}[-3]\otimes \mathrm{Det}^{-1}_{E_{p}[G]}(\prod_{\sigma:k\rightarrow \mathbb{C}}M_{p})^{+}[0]\ar[d]^{\mho}\ar[r]^<<<<{\Im\otimes \mathrm{Id}_{E_p}} &\mathrm{Det}_{E_{p}[G]}R\Gamma_{c}(O_{k,S}, M_p)\ar[d]^{\tilde{\mathcal{r}}}\\
&\mathrm{Det}_{E_{p}[G]}R\Gamma_{f}(O_{k,S}, M_{p})\otimes\otimes_{v\in S_{\infty}} (\mathrm{Det}^{-1}_{E_{p}[G]}R\Gamma(k_{v}, M_{p}))\ar[d]^{\jmath} &\mathrm{Det}_{E_{p}[G]}R\Gamma_{c}(O_{k,S_p}, M_p)\ar[d]^{({\tilde{\vartheta}^{r}}_{F, S_p})^{-1}} \\
&\mathrm{Det}_{E_{p}[G]}R\Gamma_{f}(O_{k,S_{p}}, M_{p})\otimes\otimes_{v\in S_{\infty}} (\mathrm{Det}^{-1}_{E_{p}[G]}R\Gamma(k_{v}, M_{p})) \ar[r]^<<<<<<<{\mathcal{H}} & \Xi(E(r)_{F})\otimes E_p
}
$$
where $\jmath$ identifies cohomology of  $R\Gamma_{f}(O_{k,S}, M_{p})$ with the cohomology of $R\Gamma_{f}(O_{k,S_p}, M_{p})$ and $$\mathcal{H}=({\tilde{\vartheta}^{r}}_{F, S_p})^{-1}\circ \tilde{\mathcal{J}}_{S_p}$$
where the isomorphisms ${\tilde{\vartheta}^{r}}_{F, S_p}$ and $\tilde{\mathcal{J}}_{S_p}$ are explained in the proof of Proposition \ref{proposition4.3}.

\end{proposition}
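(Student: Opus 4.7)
My plan is to reduce the commutativity to a diagram chase at the level of cohomology. Since $E_p[G]$ is semisimple, every object of $\mathcal{D}^p(E_p[G])$ is quasi-isomorphic to the direct sum of its cohomology groups placed in the appropriate degrees, and by property 2 of the Knudsen--Mumford determinant the determinant of each complex appearing in the diagram factors as a tensor product of determinants of those cohomology groups. Each of the maps $\Im \otimes \mathrm{Id}_{E_p}$, $\mho$, $\tilde{\mathcal{r}}$, $\jmath$, and $\tilde{\mathcal{J}}_{S_p}$ arises by applying $\mathrm{Det}_{E_p[G]}$ to a distinguished triangle together with property 3 (multiplicativity on triangles), so it suffices to check that the two composite routes from the top-left to the middle-right corner induce the same map on cohomology.

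First I would identify the cohomology of each term. For $r < 0$, the complex $R\Gamma_f(k_v, M_p)$ is acyclic for every finite $v$, the complexes $R\Gamma_f(O_{k,S_p}, M_p)$ and $R\Gamma_f(O_{k,S}, M_p)$ are concentrated in degree $2$, and $H^0(O_{k,S}, M_p^\vee(1))$ vanishes (since $1-r > 1$). Combined with the Poitou--Tate identifications $H^1_c(O_{k,S}, M_p) \cong \bigoplus_{v \in S_\infty} H^0(k_v, M_p)$ and $H^2_c(O_{k,S}, M_p) \cong H^1(O_{k,S}, M_p^\vee(1))^\vee$, and the analogues with $S_p$ in place of $S$, each composite route acts between the same pairs of cohomology groups via canonical isomorphisms induced by Poitou--Tate duality, the archimedean identification arising from $k_v = \mathbb{C}$ for every $v \in S_\infty$, and the inclusion $S_p \subset S$.

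Commutativity of the induced diagram on cohomology is then an instance of the octahedral axiom applied to the underlying distinguished triangles: the one of Proposition \ref{proposition4.2} defining $R\Gamma_c(O_{k,S_p}, M_p)$, the one of Lemma \ref{lemma4.6} defining $\Im$, the auxiliary triangle used in (\ref{3}) to build $\mho$, and the localization triangle (\ref{6}) realizing $\tilde{\mathcal{r}}$. The naturality of Poitou--Tate duality and of the cone construction ensures that these triangles fit into a single compatible system, and applying $\mathrm{Det}_{E_p[G]}$ transports the resulting compatibility into the asserted commutativity.

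The main obstacle will be the bookkeeping of signs and shifts introduced by the $[-3]$-shift on $R\Gamma(O_{k,S}, M_p^\vee(1))^\vee$, by the contragredient functor $(-)^\vee$, and by the Koszul rule implicit in the Knudsen--Mumford convention (noted in the remark following the definition of $\mathrm{Det}_R A^\bullet$). I would handle this by fixing, once and for all, bounded quasi-isomorphic projective resolutions of every complex in the diagram, computing all relevant determinants as ordinary top exterior powers on those resolutions, and only transporting the resulting equality back to the derived category after checking that the accumulated signs cancel. Once this sign bookkeeping is in place, the remaining content is the functoriality of the cone construction and standard properties of Poitou--Tate duality.
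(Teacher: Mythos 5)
Your proposal is correct and takes essentially the same route as the paper: because $E_{p}[G]$ is semisimple, the paper likewise reduces the whole diagram to a comparison of the induced identifications on cohomology, namely $H^{1}_{c}\simeq \oplus_{v\in S_{\infty}}H^{0}(k_{v},M_{p})$ and $H^{2}_{c}\simeq H^{2}_{f}$ versus $H^{2}_{c}\simeq H^{1}(O_{k,S},M_{p}^{\vee}(1))^{\vee}$. The only cosmetic difference is that where you invoke the octahedral axiom, the paper concretely triangulates the square by inserting the auxiliary isomorphisms $\tilde{\mathcal{J}}_{S}$ and $\tilde{\mathcal{J}}_{S_{p}}$ and checks that each resulting triangle commutes by construction, the key point being that the isomorphism $H^{1}(O_{k,S},M_{p}^{\vee}(1))^{\vee}\simeq H^{2}_{f}(O_{k,S},M_{p})$ used to define $\mho$ is exactly the composite produced by the other route.
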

\begin{proof}
The commutativity of the diagram above can be shown easily when passing to cohomology. Note that since $E_{p}[G]$ is semi-simple, all the involved (finitely generated) cohomology groups are perfect.\\
By definition of the isomorphism $\mathcal{H}$ one has a commutative diagram
$$
\xymatrixcolsep{2pc}\xymatrix{
& &\mathrm{Det}_{E_{p}[G]}R\Gamma_{c}(O_{k,S_p}, M_p)\ar[d]^{({\tilde{\vartheta}^{r}}_{F, S_p})^{-1}} \\
&\mathrm{Det}_{E_{p}[G]}R\Gamma_{f}(O_{k,S_{p}}, M_{p})\otimes\otimes_{v\in S_{\infty}} (\mathrm{Det}^{-1}_{E_{p}[G]}R\Gamma(k_{v}, M_{p}))\ar[ru]^{\tilde{\mathcal{J}}_{S_p}} \ar[r]^<<<<<<<{\mathcal{H}} & \Xi(E(r)_{F})\otimes E_p
}
$$
Next we shall define the isomorphism $\jmath$.
By the proof of proposition \ref{proposition4.3}, the isomorphism $\tilde{\mathcal{J}}_{S_p}$ arises from the distinguished triangle 
$$R\Gamma_{c}(O_{k,S_p}, M_p)\rightarrow R\Gamma_{f}(O_{k,S_p}, M_p)\rightarrow \oplus_{v\in S_p}R\Gamma_{f}(k_{v}, M_p)\oplus \oplus_{v\in S_\infty}R\Gamma(k_{v}, M_p)$$
Passing to cohomology, $\tilde{\mathcal{J}}_{S_p}$ links the following cohomology groups
$$\left\{
\begin{array}{lr}

 H^{1}_{c}(O_{k,S_p}, M_p)\simeq \oplus_{v\in S_{\infty}}H^{0}(k_{v}, M_p)\\
 H^{2}_{c}(O_{k,S_p}, M_p)\simeq H^{2}_{f}(O_{k,S_p}, M_p)
\end{array}
\right.$$
We can use the same arguments of proposition \ref{proposition4.3} to get a distinguished triangle
$$R\Gamma_{c}(O_{k,S}, M_p)\rightarrow R\Gamma_{f}(O_{k,S}, M_p)\rightarrow \oplus_{v\in S_{f}}R\Gamma_{f}(k_{v}, M_p)\oplus \oplus_{v\in S_\infty}R\Gamma(k_{v}, M_p)$$
where $S_{f}$ is the set of finite places of $S$.
This gives us an isomorphism
$$ \tilde{\mathcal{J}}_{S}: \mathrm{Det}_{E_{p}[G]}R\Gamma_{f}(O_{k,S}, M_p)\otimes \mathrm{Det}^{-1}_{E_{p}[G]} (\oplus_{v\in S_\infty}R\Gamma(k_{v}, M_p))\xrightarrow{\;\sim\;} \mathrm{Det}_{E_{p}[G]}R\Gamma_{c}(O_{k,S_p}, M_p)$$
which explicitly maps the following cohomology groups
$$\left\{
\begin{array}{lr}

 H^{1}_{c}(O_{k,S}, M_p)\simeq \oplus_{v\in S_{\infty}}H^{0}(k_{v}, M_p)\\
 H^{2}_{c}(O_{k,S}, M_p)\simeq H^{2}_{f}(O_{k,S}, M_p)
\end{array}
\right.$$
By the distinguished triangle (\ref{6}), we have an isomorphism (this isomorphism induces the isomorphism $\tilde{\mathcal{r}}$)
$$H^{2}_{c}(O_{k,S}, M_p)\simeq H^{2}_{c}(O_{k,S_{p}}, M_p)$$
hence $$H^{2}_{f}(O_{k,S}, M_p)\simeq  H^{2}_{f}(O_{k,S_{p}}, M_p)$$
The isomorphism $\;\jmath\;$ is induced by this isomorphism and the identity on $\otimes_{v\in S_{\infty}}\mathrm{Det}^{-1}_{E_{p}[G]}R\Gamma(k_{v}, M_p)$. Therefore, we have (by definition of $\jmath $) the following commutative diagram
$$
\xymatrixcolsep{2pc}\xymatrix{
& &\mathrm{Det}_{E_{p}[G]}R\Gamma_{c}(O_{k,S}, M_p)\ar[d]^{\tilde{\mathcal{r}}}\\
&\mathrm{Det}_{E_{p}[G]}R\Gamma_{f}(O_{k,S}, M_{p})\otimes\otimes_{v\in S_{\infty}} (\mathrm{Det}^{-1}_{E_{p}[G]}R\Gamma(k_{v}, M_{p}))\ar[ru]^{\tilde{\mathcal{J}}_{S}}\ar[d]^{\jmath} &\mathrm{Det}_{E_{p}[G]}R\Gamma_{c}(O_{k,S_p}, M_p) \\
&\mathrm{Det}_{E_{p}[G]}R\Gamma_{f}(O_{k,S_{p}}, M_{p})\otimes\otimes_{v\in S_{\infty}} (\mathrm{Det}^{-1}_{E_{p}[G]}R\Gamma(k_{v}, M_{p}))\ar[ru]^{\tilde{\mathcal{J}}_{S_{p}}}& 
}
$$
It remains to show that the following diagram commutes
$$
\xymatrixcolsep{2pc}\xymatrix{
&\mathrm{Det}_{E_{p}[G]}R\Gamma(O_{k,S}, M_{p}^{\vee}(1))^{\vee}[-3]\otimes \mathrm{Det}^{-1}_{E_{p}[G]}(\prod_{\sigma:k\rightarrow \mathbb{C}}M_{p})^{+}[0]\ar[d]^{\mho}\ar[r]^<<<<{\Im\otimes \mathrm{Id}_{E_p}} &\mathrm{Det}_{E_{p}[G]}R\Gamma_{c}(O_{k,S}, M_p)\\
&\mathrm{Det}_{E_{p}[G]}R\Gamma_{f}(O_{k,S}, M_{p})\otimes\otimes_{v\in S_{\infty}} (\mathrm{Det}^{-1}_{E_{p}[G]}R\Gamma(k_{v}, M_{p}))\ar[ru]^{\tilde{\mathcal{J}}_{S}} &
}
$$
for, this we recall again that $\tilde{\mathcal{J}}_{S}$ maps the cohomology groups
$$\left\{
\begin{array}{lr}

 H^{1}_{c}(O_{k,S}, M_p)\simeq \oplus_{v\in S_{\infty}}H^{0}(k_{v}, M_p)\\
 H^{2}_{c}(O_{k,S}, M_p)\simeq H^{2}_{f}(O_{k,S}, M_p)
\end{array}
\right.
$$
while, as stated above, $\Im \otimes \mathrm{Id}_{E_p}$   maps the following cohomology groups
$$\left\{
\begin{array}{lr}

 H^{1}_{c}(O_{k,S}, M_p)\simeq \oplus_{v\in S_{\infty}}H^{0}(k_{v}, M_p)\\
 H^{2}_{c}(O_{k,S}, M_p)\simeq H^{1}(O_{k,S}, M_p^{\vee}(1))^{\vee}
\end{array}
\right.$$
This induces an isomorphism
$$H^{1}(O_{k,S}, M_p^{\vee}(1))^{\vee}\simeq H^{2}_{f}(O_{k,S}, M_p)$$
which is exactly the isomorphism (\ref{5}) used to induce the isomorphism $\mho$ in (\ref{3}). Thus
$$\mho= \tilde{\mathcal{J}}_{S}\circ (\Im \otimes \mathrm{Id}_{E_p})^{-1}$$
$\Box$
\end{proof}
For later computations, we need a detailed description of how the isomorphism $\mathcal{H}^{-1}$ works in more general terms.\\
In the proof of proposition \ref{proposition4.3} we mapped motivic cohomology to $f$-cohomolgy 
provided that the complex $R\Gamma_{f}(O_{k,S_{p}},M_{p})$ has non-trivial cohomolgy only in degree 2 (that is the group $H^{2}_{f}(O_{k,S_{p}},M_{p})\cong H^{1}_{f}(O_{k,S_{p}},M_{p}^{\vee}(1))^{\vee}$. The other two degrees isomorphisms are
\begin{itemize}
\item The cycle class map isomorphism
   $$H^{0}_{\mathcal{M}}(M)\otimes E_{p}\stackrel{cyc}{\xrightarrow{\;\sim\;}} H^{0}_{f}(O_{k,S_{p}}, M_{p})$$
\item The chern class map isomorphism
$$H^{1}_{\mathcal{M}}(M)\otimes E_{p}\stackrel{ch}{\xrightarrow{\;\sim\;}} H^{1}_{f}(O_{k,S_{p}}, M_{p})$$
(remember that $H^{0}_{\mathcal{M}}(M)=0$ and $H^{1}_{\mathcal{M}}(M)=0$  for our motive, but we need to compute the above isomorphisms nonetheless to take account of the finite groups which appear in the integral case).
\end{itemize}
Hence, more accurately
$$\mathcal{H}^{-1}:=\mathrm{Det}(ch^{\vee})\otimes \mathrm{Det}^{-1}(ch) \otimes \mathrm{Det}(cyc) \otimes \mathrm{Det}^{-1}(cyc^{\vee})\otimes\mathrm{Det}^{-1}(\alpha)$$
these maps are summarized as 
$$
\xymatrixcolsep{0.0pc}\xymatrix{
&\mathrm{Det}^{-1}_{E_{p}[G]}(H^{0}_{\mathcal{M}}(M^{\vee}(1))^{\vee}\otimes E_{p})\ar[d]^{\mathrm{Det}^{-1}(cyc^{\vee})}&\otimes \mathrm{Det}_{E_{p}[G]}(H^{0}_{\mathcal{M}}(M)\otimes E_{p})\ar[d]^{\mathrm{Det}(cyc)}&\otimes \mathrm{Det}^{-1}_{E_{p}[G]}(H^{1}_{\mathcal{M}}(M)\otimes E_{p})\ar[d]^{\mathrm{Det}^{-1}(ch)}\\
 &\mathrm{Det}^{-1}_{E_{p}[G]}H^{0}_{f}(O_{k,S_{p}}, M_{p}^{\vee}(1))^{\vee}) &\otimes\mathrm{Det}_{E_{p}[G]}H^{0}_{f}(O_{k,S_{p}}, M_{p})&\otimes \mathrm{Det}^{-1}_{E_{p}[G]}H^{1}_{f}(O_{k,S_{p}}, M_{p}) 
}
$$
and
$$
\xymatrixcolsep{0.0pc}\xymatrix{
&\mathrm{Det}_{E_{p}[G]}(H^{1}_{\mathcal{M}}((M)^{\vee}(1))^{\vee})\ar[d]^{\mathrm{Det}(ch^{\vee})} &\otimes \mathrm{Det}^{-1}_{E_{p}[G]}(M_{B}^{+}\otimes E_{p})\ar[d]^{\mathrm{Det}^{-1}(\alpha)}\\
& \mathrm{Det}_{E_{p}[G]}H^{2}_{f}(O_{k,S_{p}}, M_{p})&\mathrm{Det}^{-1}_{E_{p}[G]}(\oplus_{\sigma: k\rightarrow \mathbb{C}}M_{p})^{+}
}
$$

\begin{theorem}\label{theorem4.8}
Let $F/k$ be a finite abelian extension of number fields with $k$ imaginary quadratic. Suppose that $p$ is a rational prime which does not divide $\# G$, that $E$ is a number field which contains all values of characters of $G$ and let $r$ denote a strictly negative integer. Let $S$ be a finite set of places of $k$ containing the infinite places, the $p$-places and the places which ramify in $F/k$ and $S_f$ the subset of finite places of $S$. Then the following statements are equivalent
\begin{itemize}
\item Conjecture \ref{conjecture4.3.4}.4 holds for the pair $(E(r)_{F}, O_{p}[G])$.
\item For all $p$-adic characters $\chi$ of $G$ one has
\small{
\begin{align*}
&\rho_{F}^{r}(e_{\chi}(K_{1-2r}(F)_{/tors}\otimes O_{p}))=\\ &\prod_{v\in S_{f}}(1-Nv^{-r}\chi^{-1}(v)).\mathrm{Fitt}_{O_{p}}(e_{\chi}(H^{0}(F, \mathbb{Q}_{p}/\mathbb{Z}_{p}(1-r))\otimes O_{p}))\mathrm{Fitt}^{-1}_{O_{p}}(e_{\chi}(K_{-2r}(O_{F, S})\otimes O_{p}))L^{'}(r,\chi^{-1})
\end{align*}
}
\end{itemize}
where $O_{F, S}$ refers to the ring of $S$-units of $F$, and $\rho_{F}^{r}$ denotes the extension of scalars of the Beilinson regulator over $O_p$. We also identified $e_{\chi}O_{p}[G]=e_{\chi}O_{p}$ with $O_{p}$.
\end{theorem}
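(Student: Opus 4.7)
The plan is to translate Conjecture \ref{conjecture4.3.4}.4 into the explicit formula of statement (ii) by rewriting $\mathrm{Det}_{O_p[G]}\mathrm{R}\Gamma_c(O_{k,S_p},T_p)$ through the commutative diagram of Proposition \ref{proposition4.7}, decomposing into $\chi$-isotypic components, and carefully tracking the Euler factors arising from the enlargement $S_p\subset S$ and from the comparison of $\vartheta^r_{F,S_p}$ with $\tilde{\vartheta}^r_{F,S_p}$.

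First, I would invoke the compatibility of Conjecture \ref{conjecture4.3.4}.4 with enlargement of $S_p$ (Remark 2 following Conjecture \ref{conjecture4.3}) to replace $S_p$ by $S$, and then use Proposition \ref{proposition4.7} to identify $\mathrm{Det}_{O_p[G]}\mathrm{R}\Gamma_c(O_{k,S},T_p)$, via $\Im$, with $\mathrm{Det}_{O_p[G]}\mathrm{R}\Gamma(O_{k,S},T_p^\vee(1))^\vee[-3]\otimes \mathrm{Det}^{-1}_{O_p[G]}(\prod_{\sigma:k\to\mathbb{C}}T_p)^+$. The archimedean factor identifies with $Y_r(F)^+\otimes O_p\cong O_p[G]$ via Lemma \ref{lemma3.1}, while the relation $\tilde{\vartheta}^r_{F,S_p}=\varepsilon_{S_p}(r)\cdot \vartheta^r_{F,S_p}$ from the proof of Proposition \ref{proposition4.3}, combined with the distinguished triangle (\ref{6}), produces a correction factor whose $\chi$-component is precisely $\prod_{v\in S_f}(1-Nv^{-r}\chi^{-1}(v))$, using that $S$ contains all ramified places (so $e_{I_v}=1$ whenever $\chi$ is unramified at $v$) and that $\mathrm{Frob}_v$ acts on the $\chi$-component with eigenvalue $\chi^{-1}(v)$.

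Next, since $p\nmid\#G$ the ring $O_p[G]$ decomposes as $\prod_\chi e_\chi O_p$ with $e_\chi O_p\cong O_p$, and every $O_p[G]$-module splits along $\chi$. By the Quillen--Lichtenbaum theorem, the étale Chern class map gives $H^1(O_{F,S},\mathbb{Z}_p(1-r))\cong K_{1-2r}(O_F)\otimes\mathbb{Z}_p$ and $H^2(O_{F,S},\mathbb{Z}_p(1-r))\cong K_{-2r}(O_{F,S})\otimes\mathbb{Z}_p$, with the torsion subgroup of $H^1$ canonically identified with $H^0(F,\mathbb{Q}_p/\mathbb{Z}_p(1-r))$. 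On each $\chi$-component, $e_\chi(K_{1-2r}(O_F)_{/tors}\otimes O_p)$ is free of rank one over $O_p$ (by the rank computation coming from Lemma \ref{lemma3.1}), while the remaining two groups are finite torsion $O_p$-modules of projective dimension at most one. Applying property 4 of the determinant functor to each torsion piece converts it into $\mathrm{Fitt}_{O_p}(e_\chi(H^0(F,\mathbb{Q}_p/\mathbb{Z}_p(1-r))\otimes O_p))$ and $\mathrm{Fitt}^{-1}_{O_p}(e_\chi(K_{-2r}(O_{F,S})\otimes O_p))$, respectively.

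On the analytic side, tracing $\vartheta^r_{F,\infty}$ through the Beilinson regulator trivialisation on the rank-one $O_p$-module $e_\chi(K_{1-2r}(O_F)_{/tors}\otimes O_p)$ shows that the $\chi$-component of $(\vartheta^r_{F,\infty})^{-1}(L^*(E(r)_F,0)^{-1})$ contributes the factor $L'(r,\chi^{-1})$ to the image of $e_\chi(K_{1-2r}(F)_{/tors}\otimes O_p)$ under $\rho^r_F$. Matching both sides of the conjecture on each $\chi$-component yields exactly the displayed formula, and the implication (ii)$\Rightarrow$(i) is obtained by reversing these identifications step by step. The main obstacle I foresee is the integral bookkeeping: one must verify that the étale Chern class map respects the integral lattice $\mathcal{I}^p_{F,S_p}(r)$ from the definition of Conjecture \ref{conjecture4.3.4}.4, and track the $\#$-involution through the chain of isomorphisms in $\mathcal{H}^{-1}$ carefully enough that the character inversion $\chi\leftrightarrow\chi^{-1}$ appears correctly in the final formula.
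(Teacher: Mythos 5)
Your proposal follows essentially the same route as the paper's proof: enlarging $S_p$ to $S$, passing through the commutative diagram of Proposition \ref{proposition4.7}, extracting the Euler factor $\prod_{v\in S_f}(1-Nv^{-r}\chi^{-1}(v))$ from the comparison $\tilde{\vartheta}^{r}_{F,S_p}=\varepsilon_{S_p}(r)\vartheta^{r}_{F,S_p}$ together with the triangle (\ref{6}), decomposing over $\chi$ using $p\nmid\#G$, converting the finite modules into Fitting ideals via property 4 of $\mathrm{Det}$, and trivialising the rank-one piece by the Beilinson regulator to produce $L'(r,\chi^{-1})$. The two concerns you flag at the end (integrality of the Chern class maps and the $\#$-involution) are exactly the points the paper handles by citing Voevodsky for the integral Chern isomorphisms and by applying $\#$ to both sides of the identity before decomposing, so your outline matches the paper's argument in both structure and substance.
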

\begin{proof}
Let $p$ be a rational prime and $S$ a finite set of places of $k$ containing the infinite places, the $p$-places and the places which ramify in $F/k$. We denote by $S_f$ the subset of finite places inside $S$.\\
Recall that $O$ denotes the ring of integers of $E$, and $O_{p}=O\otimes \mathbb{Z}_{p}$.\\
We also fix $T_p =O_{p}[G](r)$.\\
If we denote by $\vartheta^{r}_{F, S}$ the compositum map
$$\vartheta^{r}_{F, S}:= \mathcal{r}^{-1}\circ \vartheta^{r}_{F, S_{p}}$$
where $\mathcal{r} : \mathrm{Det}_{E_{p}[G]}R\Gamma_{c}(O_{k, S}, M_{p})\widetilde{\rightarrow}\mathrm{Det}_{E_{p}[G]}R\Gamma_{c}(O_{k, S_{p}}, M_{p})$ is the isomorphism obtained from the distinguished triangle (\ref{6}) by mapping $\otimes_{v\in S_{f}\backslash S_{p}}\mathrm{Det}_{E_{p}[G]}R\Gamma_{f}(k_{v}, M_p)$ to $E_{p}[G]$ via 
$\otimes_{v\in S_{f}\backslash S_{p}}\mathrm{Id}_{V_{v}, triv}$ (as explained in the proof of proposition \ref{proposition4.3}),
then the statement of the ETNC for the enlarged set of primes $S$ reads
$$O_{p}[G]\vartheta^{r}_{F, S} \circ (\vartheta^{r}_{F, \infty})^{-1} (L^{*}(E(r)_{F},0)^{-1})=\mathrm{Det}_{O_{p}[G]}R\Gamma_{c}(O_{k,S}, T_p)$$

For any subset of finite places $S^{'}$ of $S$
we write $$\varepsilon_{S^{'}}(r):=\prod_{v\in S^{'}}(1-Nv^{-r}\mathrm{Frob}_{v}e_{I_{v}})$$
where $\mathrm{Frob}_{v}$ is any representative in $G$ of the Frobenius map  associated to the finite place $v$, and $e_{I_{v}}:= \frac{\Sigma_{\delta \in I_v}\delta}{\# I_v}$ is the idempotent of $\mathbb{Q}[G]$ which corresponds to the inertia subgroup $I_v$.\\
In the proof of Proposition \ref{proposition4.3} we mentioned the equality (\cite{burnsFlach} , (11), (12))
$$\tilde{\vartheta}^{r}_{F, S_{p}}=\varepsilon_{S_p}(r)\vartheta^{r}_{F, S_{p}})$$
Again, by the same reasoning, we also have
$$\mathcal{r}=\varepsilon_{S_{f}\backslash S_{p}}(r).\tilde{\mathcal{r}}$$
Then 
\small{
\begin{align*}
&(\vartheta^{r}_{F, S})^{-1}\mathrm{Det}_{O_{p}[G]}R\Gamma_{c}(O_{k,S},T_{p})=((\vartheta^{r}_{F, S_p})^{-1}\circ \mathcal{r})(\mathrm{Det}_{O_{p}[G]}R\Gamma_{c}(O_{k,S},T_{p}))\\
=&(\varepsilon_{S_{p}}(r).(
\tilde{\vartheta}^{r}_{F, S_{p}})^{-1}\circ \varepsilon_{S_{f}\backslash S_{p}}(r).\tilde{\mathcal{r}})\mathrm{Det}_{O_{p}[G]}R\Gamma_{c}(O_{k,S},T_{p}))\\
=&\varepsilon_{S_{f}}(r)((\tilde{\vartheta}^{r}_{F, S_{p}})^{-1}\circ \tilde{\mathcal{r}})\mathrm{Det}_{O_{p}[G]}R\Gamma_{c}(O_{k,S},T_{p}))\\
=&\varepsilon_{S_f}(r)(\mathcal{H} \circ \jmath \circ \mho)(\mathrm{Det}_{O_{p}[G]}(R\Gamma(O_{k,S}, T_{p}^{\vee}(1))^{\vee}[-3])\otimes \mathrm{Det}^{-1}_{O_{p}[G]}(\prod_{\sigma:k\rightarrow \mathbb{C}}T_{p})^{+}))\\
=&\varepsilon_{S_f}(r)(\mathrm{Det}(ch^{\vee})\otimes \mathrm{Det}^{-1}(ch)\otimes  \mathrm{Det}^{-1}(cyc^{\vee})\otimes\mathrm{Det}(cyc)\otimes\mathrm{Det}^{-1}(\alpha))^{-1}(\mathrm{Det}_{O_{p}[G]}(R\Gamma(O_{k,S}, T_{p}^{\vee}(1))^{\vee}[-3])\\
&\otimes \mathrm{Det}^{-1}_{O_{p}[G]}(\prod_{\sigma:k\rightarrow \mathbb{C}}T_{p})^{+}))
\end{align*}
}
\normalsize{
In the last equality, the maps }$\mathrm{Det}(ch^{\vee})$, $\mathrm{Det}^{-1}(ch)$, $\mathrm{Det}^{-1}(cyc^{\vee})$ and $\mathrm{Det}(cyc)$  are the induced maps which  apply to $\mathrm{Det}_{O_{p}[G]}(R\Gamma(O_{k,S}, T_{p}^{\vee}(1))^{\vee}[-3])$ by composition with $\jmath \circ \mho$.\\


Note that, since we suppose $k$ to be imaginary quadratic, we have $(\prod_{\sigma:k\rightarrow \mathbb{C}}T_{p})^{+}= T_p =O_{p}[G](r)$.

Thus
\small{
\begin{align*}
&(\vartheta^{r}_{F, S})^{-1}\mathrm{Det}_{O_{p}[G]}R\Gamma_{c}(O_{k,S},T_{p})=\\
&=\varepsilon_{S_f}(r)(\mathrm{Det}(ch^{\vee})\otimes \mathrm{Det}^{-1}(ch)\otimes  \mathrm{Det}^{-1}(cyc^{\vee})\otimes\mathrm{Det}(cyc)\otimes\mathrm{Det}^{-1}(\alpha))^{-1}(\mathrm{Det}_{O_{p}[G]}(R\Gamma(O_{k,S}, T_{p}^{\vee}(1))^{\#}\otimes \mathrm{Det}^{-1}_{O_{p}[G]}T_p)\\
&=\varepsilon_{S_f}(r)\bigg(\Big((\mathrm{Det}(ch^{\vee})\otimes \mathrm{Det}^{-1}(ch)\otimes  \mathrm{Det}^{-1}(cyc^{\vee})\otimes\mathrm{Det}(cyc))^{-1}\mathrm{Det}_{O_{p}[G]}(R\Gamma(O_{k,S}, T_{p}^{\vee}(1))^{\#}\Big)\otimes \mathrm{Det}^{-1}_{O_{p}[G]}(\alpha^{-1}(T_{p}))\bigg)
\end{align*}
}
\normalsize{which is equivalent to}
\small{
\begin{align*}
&(\vartheta^{r}_{F, S})^{-1, \#}\mathrm{Det}_{O_{p}[G]}R\Gamma_{c}(O_{k,S},T_{p})^{\#}=\\
&=\varepsilon_{S_f}(r)^{\#}\bigg(\Big((\mathrm{Det}(ch^{\vee})\otimes \mathrm{Det}^{-1}(ch)\otimes  \mathrm{Det}^{-1}(cyc^{\vee})\otimes\mathrm{Det}(cyc))^{-1, \#}\mathrm{Det}_{O_{p}[G]}(R\Gamma(O_{k,S}, T_{p}^{\vee}(1))\Big)\otimes \mathrm{Det}_{O_{p}[G]}(\alpha^{-1}(T_{p}))^{\vee}\bigg)
\end{align*}
}
\normalsize{yet we have }
\begin{itemize}
\item $H^{0}(O_{k, S},T_{p}^{\vee}(1))=0$ since $r<0$ and $H^{3}(O_{k, S},T_{p}^{\vee}(1))=0$ .
\item Assuming that the Chern maps are isomorphisms on the integral level \cite{voevodsky}, we write 
$$ch^{O_p}_{r,k} : K_{k-2r}(O_{F,S})\otimes O_{p}\xrightarrow{\;\sim\;} H^{2-k}(O_{k,S}, O_{p}(1-r))$$ for the isomorphisms induced by tensoring with $\mathrm{Id}_{O_p}$ for $k=0,1$.

Then
\begin{align*}
&(\mathrm{Det}(ch^{\vee})\otimes \mathrm{Det}^{-1}(ch))^{-1, \#}(\mathrm{Det}^{-1}_{O_{p}[G]}H^{1}(O_{k,S}, T_{p}^{\vee}(1))\otimes \mathrm{Det}_{O_{p}[G]}H^{2}(O_{F,S}, T_{p}^{\vee}(1))\\
=&(\mathrm{Det}^{-1}((ch^{O_p}_{r,1})^{-1})\otimes \mathrm{Det}((ch^{O_p}_{r,0})^{-1}))(\mathrm{Det}^{-1}_{O_{p}[G]}H^{1}(O_{F,S}, O_{p}(1-r))\otimes \mathrm{Det}_{O_{p}[G]}H^{2}(O_{F,S}, O_{p}(1-r)))\\
=&\mathrm{Det}^{-1}_{O_{p}[G]}((ch^{O_p}_{r,1})^{-1}H^{1}(O_{F,S}, O_{p}(1-r)))\otimes \mathrm{Det}_{O_{p}[G]}((ch^{O_p}_{r,0})^{-1}H^{2}(O_{F,S}, O_{p}(1-r)))\\
=&\mathrm{Det}^{-1}_{O_{p}[G]}(K_{1-2r}(O_{F,S})\otimes O_{p})\otimes \mathrm{Det}_{O_{p}[G]}(K_{-2r}(O_{F,S})\otimes O_{p})
\end{align*}
\item One also has
\begin{align*}
\mathrm{Det}_{O_{p}[G]}(\alpha^{-1}(T_{p}))^{\vee}&\cong \mathrm{Det}_{O_{p}[G]}(O_{p}[G])\\
&=O_{p}[G].
\end{align*}
\end{itemize}

Conjecture \ref{conjecture4.3.4}.4 is equivalent to
\small{
\begin{align*}
(L^{*}(E(r)_{F},0)^{-1})^{\#}&=\varepsilon_{S_f}(r)^{\#}.(\vartheta^{r}_{F, \infty})^{\#}\bigg(\Big(\mathrm{Det}^{-1}_{O_{p}[G]}(K_{1-2r}(O_{F,S})\otimes O_{p})\otimes \mathrm{Det}_{O_{p}[G]}(K_{-2r}(O_{F,S})\otimes O_{p})\Big)\bigg)\\
&=\prod_{v\in S_{f}}(1-Nv^{-r}\mathrm{Frob}_{v}^{-1}e_{I_{v}}).(\vartheta^{r}_{F, \infty})^{\#}\bigg(\Big(\mathrm{Det}^{-1}_{O_{p}[G]}(K_{1-2r}(O_{F,S})
\otimes O_{p})\\ &\otimes \mathrm{Det}_{O_{p}[G]}(K_{-2r}(O_{F,S})\otimes O_{p})\Big)\bigg)
\end{align*}
}

\normalsize{and since} $p$ does not divide $\mid{G}\mid$ and $O_{p}$ contains all values of characters of $G$, the latter is equivalent to the following being true for all characters $\chi$ of $G$
\begin{align*}
L^{'}(r,\chi^{-1})^{-1}&=\prod_{v\in S_{f}}(1-Nv^{-r}\chi^{-1}(v)).(\vartheta^{r}_{F, \infty})^{\#}\bigg(\Big(\mathrm{Det}^{-1}_{O_{p}}(e_{\chi}(K_{1-2r}(O_{F,S})
\otimes O_{p}))\\ &\otimes \mathrm{Det}_{O_{p}}(e_{\chi}(K_{-2r}(O_{F,S})\otimes O_{p}))\Big)\bigg)
\end{align*}
The ring $O_p$ is a product of discrete valuation rings, which means that every finite module is of projective dimension less or equal to $1$ over $O_p$ , hence
\begin{align*}
\mathrm{Det}_{O_{p}}(e_{\chi}(K_{1-2r}(F)\otimes O_{p}))&=\mathrm{Det}_{O_{p}}(e_{\chi}(K_{1-2r}(F)\otimes O_{p})_{/tors}))\otimes \mathrm{Det}_{O_{p}}(e_{\chi}(tors_{O_{p}}(K_{1-2r}(F)\otimes O_{p})))\\
&=\mathrm{Det}_{O_{p}}(e_{\chi}(K_{1-2r}(F)\otimes O_{p})_{/tors}))\otimes \mathrm{Fitt}^{-1}_{O_{p}}(e_{\chi}(H^{0}(F, \mathbb{Q}_{p}/\mathbb{Z}_{p}(1-r))\otimes O_{p})).
\end{align*}
The map $(\vartheta^{r}_{F, \infty})^{\#}$ applies to  $\mathrm{Det}_{O_{p}}(e_{\chi}(K_{-2r}(O_{F,S})\otimes O_p))$ and $\mathrm{Det}_{O_{p}}(e_{\chi}(tors_{O_{p}}(K_{1-2r}(F)\otimes O_{p})))$ via identity since $K_{-2r}(O_{F,S})\otimes E$ and $tors_{O}(K_{1-2r}(F)\otimes O)\otimes E$ are trivial.\\
 It also
maps $(\mathrm{Det}^{-1}_{O_{p}}(e_{\chi}(K_{1-2r}(O_{F,S})
\otimes O_{p})_{/tors})$ by definition to $(\mathrm{Det}^{-1}_{O_{p}}\rho_{F}^{r}(e_{\chi}(K_{1-2r}(O_{F,S})
\otimes O_{p})_{/tors})$ (as explained in the proof of Theorem \ref{theorem4.4}).

We then get
\small{
\begin{align*}
&\rho_{F}^{r}(e_{\chi}(K_{1-2r}(F)_{/tors}\otimes O_{p}))=\\ &\prod_{v\in S_{f}}(1-Nv^{-r}\chi^{-1}(v)).\mathrm{Fitt}_{O_{p}}(e_{\chi}(H^{0}(F, \mathbb{Q}_{p}/\mathbb{Z}_{p}(1-r))\otimes O_{p}))\mathrm{Fitt}^{-1}_{O_{p}}(e_{\chi}(K_{-2r}(O_{F, S})\otimes O_{p}))L^{'}(r,\chi^{-1})
\end{align*}
$\Box$
}
\end{proof}
In \cite{Leung}, the author suggests a proof of the ETNC for abelian extensions over an imaginary quadratic field for odd primes whenever a certain condition is fulfilled (\cite{Leung}, Theorem 1.1).\\
This motivates the following corollary
\begin{corollary}\label{corollary4.9}
Let $F/k$ be a finite abelian extension of number fields with $k$ imaginary quadratic. Suppose that $p$ is a rational prime which is split and does not divide $6.\# G$ and that $r<0$. 
Then 
\small{
\begin{align*}
&\rho_{F}^{r}(e_{\chi}(K_{1-2r}(F)_{/tors}\otimes O_{p}))=\\ &\prod_{v\in S_{f}}(1-Nv^{-r}\chi^{-1}(v)).\mathrm{Fitt}_{O_{p}}(e_{\chi}(H^{0}(F, \mathbb{Q}_{p}/\mathbb{Z}_{p}(1-r))\otimes O_{p}))\mathrm{Fitt}^{-1}_{O_{p}}(e_{\chi}(K_{-2r}(O_{F, S})\otimes O_{p}))L^{'}(r,\chi^{-1})
\end{align*}
}
where $O_{F, S}$ refers to the ring of $S$-units of $F$, and $\rho_{F}^{r}$ denotes the extension of scalars of the Beilinson regulator over $O_p$.
\end{corollary}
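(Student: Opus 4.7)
The plan is to deduce the corollary essentially as a direct combination of Leung's proof of ETNC with the equivalence established in Theorem \ref{theorem4.8}. The strategy reduces to two ingredients: invoking the main theorem of \cite{Leung} to establish Conjecture \ref{conjecture4.3.4}.4 in our setting, and then translating that statement into the explicit regulator formula via Theorem \ref{theorem4.8}.

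First, I would match the hypotheses of Corollary \ref{corollary4.9} against those of \cite{Leung}, Theorem 1.1. The conditions imposed here are that $k$ be imaginary quadratic, $F/k$ finite abelian, $r<0$, and $p$ a rational prime that splits in $k$ and is coprime to $6\cdot\#G$. These are precisely the hypotheses under which Leung establishes the $p$-part of the ETNC for $E(r)_F$, that is, the equality
\[
O_p[G]\,\vartheta^{r}_{F, S_p}\circ(\vartheta^{r}_{F,\infty})^{-1}(L^{*}(E(r)_F,0)^{-1})=\mathrm{Det}_{O_p[G]}\mathrm{R}\Gamma_c(O_{k,S_p},T_p),
\]
which is Conjecture \ref{conjecture4.3.4}.4 for the pair $(E(r)_F, O_p[G])$. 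Here $E$ may be taken to be any number field containing all values of characters of $G$ (for instance a sufficiently large cyclotomic field), and $O_p=O\otimes\mathbb{Z}_p$.

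Second, since $p\nmid 6\cdot\#G$ in particular implies $p\nmid\#G$, all the running hypotheses of Theorem \ref{theorem4.8} are satisfied: $F/k$ is a finite abelian extension over an imaginary quadratic base, $p\nmid\#G$, $E$ contains all character values, and $S$ is a finite set of places of $k$ containing the infinite places, the $p$-places, and the ramified places. Applying Theorem \ref{theorem4.8} to the $p$-part of the ETNC just established by Leung yields the displayed equality of ideals
\[
\rho_F^r(e_\chi(K_{1-2r}(F)_{/\mathrm{tors}}\otimes O_p))=\prod_{v\in S_f}(1-Nv^{-r}\chi^{-1}(v))\,\mathrm{Fitt}_{O_p}(\cdot)\,\mathrm{Fitt}^{-1}_{O_p}(\cdot)\,L'(r,\chi^{-1})
\]
for every $p$-adic character $\chi$ of $G$, which is exactly the statement of the corollary.

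The argument is entirely a matter of chaining together two previously established results, so there is no genuine obstacle; the only point that requires care is bookkeeping in step one, namely checking that the precise form of Leung's hypothesis ($p$ split and coprime to $6\cdot\#G$) indeed covers the ETNC statement in the formulation of Conjecture \ref{conjecture4.3.4}.4 as used here (with the parity convention of \cite{KM}). This is a direct comparison of normalisations and involves no further mathematical input.
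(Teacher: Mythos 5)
Your proposal is correct and follows exactly the paper's own argument: the paper's proof is the one-line deduction "This follows from Theorem \ref{theorem4.8} and (\cite{Leung}, Corollary 1.2)," i.e.\ Leung's result supplies Conjecture \ref{conjecture4.3.4}.4 under the stated hypotheses on $p$, and Theorem \ref{theorem4.8} converts it into the displayed regulator formula. The only (immaterial) difference is that the paper cites Corollary 1.2 of \cite{Leung} rather than Theorem 1.1.
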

\begin{proof}
This follows from Theorem \ref{theorem4.8} and (\cite{Leung}, Corollary 1.2).
$\Box$
\end{proof}
This implies the following
\begin{corollary}\label{corollary4.10}
Let $F/k$ be a finite abelian extension of number fields with $k$ imaginary quadratic, and $r<0$. Suppose that $O$ is the ring of integers of the extension generated over $\mathbb{Q}$ by all values of characters of $G:=\mathrm{Gal}(F/k)$, and let $S$ be any finite set of places containing the infinite places and the places which ramify in $F/k$.\\
Then Conjecture \ref{conjecture3.1} holds for the set of data $(F/k, r, S, O_p)$, for all rational primes $p$ which are split and such that $p\nmid 6\#G$.
\end{corollary}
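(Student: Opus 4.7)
The plan is to combine Corollary~\ref{corollary4.9} with an explicit Galois-module analysis of $H^{0}(F,\mathbb{Q}_{p}/\mathbb{Z}_{p}(1-r))$.

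First, I would reduce Conjecture~\ref{conjecture3.1} to a concrete membership condition. The rationality statement (part~1 of the conjecture) follows from Corollary~\ref{corollary4.5} combined with the $E[G]$-version of ETNC, which is implied by Leung's theorem under our hypotheses. For the integral statement (part~2), by the first remark after Conjecture~\ref{conjecture3.1} it suffices to show, for each character $\chi$ of $G$ and some $\sigma\in G$, that
$$\eta_{\chi,\sigma}:=w_{1-r}(F^{\ker\chi})\,L'_{S}(r,\chi^{-1})\,\chi^{-1}(\sigma)\,|G|\,e_{\chi}\in\mathcal{T}\otimes O_{p}.$$

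Second, under the hypotheses ($p$ split, $p\nmid 6\#G$), Corollary~\ref{corollary4.9} gives, after identifying $e_{\chi}(\mathcal{T}\otimes O_{p})$ with an $O_{p}$-fractional ideal inside $E_{p}$, the explicit formula
$$e_{\chi}(\mathcal{T}\otimes O_{p})=L'_{S}(r,\chi^{-1})\cdot\mathrm{Fitt}_{O_{p}}(e_{\chi}\mathcal{H})\cdot\mathrm{Fitt}_{O_{p}}^{-1}(e_{\chi}\mathcal{K}),$$
using $L'_{S}(r,\chi^{-1})=\prod_{v\in S_{f}}(1-Nv^{-r}\chi^{-1}(v))L'(r,\chi^{-1})$ and the shorthand $\mathcal{H}:=H^{0}(F,\mathbb{Q}_{p}/\mathbb{Z}_{p}(1-r))\otimes O_{p}$, $\mathcal{K}:=K_{-2r}(O_{F,S})\otimes O_{p}$. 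Since $p\nmid|G|$, both $|G|$ and the root of unity $\chi^{-1}(\sigma)$ are units in $O_{p}$; cancelling these unit factors reduces the desired membership $\eta_{\chi,\sigma}\in e_{\chi}(\mathcal{T}\otimes O_{p})$ to the Fitting-ideal inclusion
$$w_{1-r}(F^{\ker\chi})\cdot\mathrm{Fitt}_{O_{p}}(e_{\chi}\mathcal{K})\subseteq\mathrm{Fitt}_{O_{p}}(e_{\chi}\mathcal{H}).$$

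The heart of the argument, and the step I expect to be the main obstacle, is verifying this last inclusion. The module $H^{0}(F,\mathbb{Q}_{p}/\mathbb{Z}_{p}(1-r))$ is cyclic over $\mathbb{Z}_{p}$ of order $p^{N}=w_{1-r}(F)_{p}$, and $G$ acts through the single character $\omega^{1-r}$, where $\omega$ denotes the reduction modulo $p^{N}$ of the cyclotomic character. The semisimple decomposition over $O_{p}[G]$ (valid since $p\nmid|G|$) then yields $e_{\chi}\mathcal{H}=0$ when $\chi\ne\omega^{1-r}$, with Fitting ideal $O_{p}$, and $e_{\chi}\mathcal{H}\cong O_{p}/p^{N}O_{p}$ when $\chi=\omega^{1-r}$, with Fitting ideal $p^{N}O_{p}$. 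In the first case the inclusion is immediate since the right-hand side is the unit ideal and both factors on the left lie in $O_{p}$. In the second case, the equality $\ker\chi=\ker\omega^{1-r}$ means that $\ker\chi$ acts trivially on $\mathbb{Z}/p^{N}(1-r)$, which combined with $F^{\ker\chi}\subseteq F$ forces $w_{1-r}(F^{\ker\chi})_{p}=p^{N}$; hence $w_{1-r}(F^{\ker\chi})\in p^{N}O_{p}=\mathrm{Fitt}_{O_{p}}(e_{\chi}\mathcal{H})$, and the inclusion follows because $\mathrm{Fitt}_{O_{p}}(e_{\chi}\mathcal{K})\subseteq O_{p}$. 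This establishes $\eta_{\chi,\sigma}\in e_{\chi}(\mathcal{T}\otimes O_{p})$ for every $\chi$ and $\sigma$, hence Conjecture~\ref{conjecture3.1} for the data $(F/k,r,S,O_{p})$.
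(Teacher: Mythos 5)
Your proposal is correct and follows the same skeleton as the paper's proof: part 1 of Conjecture \ref{conjecture3.1} via Corollary \ref{corollary4.5}, and part 2 by feeding Corollary \ref{corollary4.9} into the membership criterion from the first remark after Conjecture \ref{conjecture3.1}, after which everything reduces --- cancelling the units $\chi^{-1}(\sigma)$, $\mid G\mid$ and the Euler factors at the $p$-places, which are $\equiv 1 \bmod p$ since $r<0$ (a point you elide: Corollary \ref{corollary4.9} requires $S\supseteq S_p$ while Corollary \ref{corollary4.10} does not, so these factors must be explicitly discarded as the paper does) --- to the single inclusion $w_{1-r}(F^{\ker\chi})\in\mathrm{Fitt}_{O_p}\bigl(e_\chi(H^{0}(F,\mathbb{Q}_{p}/\mathbb{Z}_{p}(1-r))\otimes O_{p})\bigr)$. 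Where you genuinely diverge is in how you establish this inclusion. The paper observes that $e_\chi(H^{0}(F,\mathbb{Q}_{p}/\mathbb{Z}_{p}(1-r))\otimes O_{p})$ is fixed by $\mathrm{Gal}(F/F^{\ker\chi})$, hence is a submodule of $H^{0}(F^{\ker\chi},\mathbb{Q}_{p}/\mathbb{Z}_{p}(1-r))\otimes O_{p}$, and concludes by divisibility of Fitting ideals of finite modules over the product of discrete valuation rings $O_p$; this is short and requires no description of the $G$-action. You instead compute the $\chi$-component outright: $G$ acts on the cyclic module $H^{0}(F,\mathbb{Q}_{p}/\mathbb{Z}_{p}(1-r))\cong\mathbb{Z}/p^{N}$ through the $(1-r)$-th power of the mod-$p^{N}$ cyclotomic character, which, because $p\nmid\#G$ (and $p>3$), lifts uniquely through Teichm\"uller to an honest $O_p$-valued character $\psi$ of $G$; hence the component vanishes for $\chi\neq\psi$ and equals the whole module for $\chi=\psi$, in which case $\ker\chi$ acts trivially and $w_{1-r}(F^{\ker\chi})_p=p^{N}$ exactly. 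This buys you the sharper statement that the Fitting ideal actually equals $w_{1-r}(F^{\ker\chi})O_p$ (rather than merely containing it), at the cost of the routine but necessary verification that the mod-$p^{N}$ action is genuinely scalar multiplication by values of a character in $O_p^{\times}$. Both arguments are valid; note also that the paper finishes by invoking Theorem \ref{theorem4.8} to pin down an exact preimage $\epsilon'$ rather than arguing purely at the level of ideals, but this is a cosmetic difference.
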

\begin{proof}
Let $p$ be a rational prime which is split and doesn't divide $6.\#G$. Corollary \ref{corollary4.5} ensures that the first statement of Conjecture \ref{conjecture3.1} is fulfilled for $(F/k, r, S, O_p)$.\\
By Corollary \ref{corollary4.9} there exists at least one element $\epsilon\in K_{1-2r}\otimes O_p$ such that
\begin{align*}
\rho^{r}_{F,p}(\epsilon)&\in \prod_{v\in S_{f}\cup S_p}(1-Nv^{-r}\chi^{-1}(v)).\mathrm{Fitt}_{O_{p}}(e_{\chi}(H^{0}(F, \mathbb{Q}_{p}/\mathbb{Z}_{p}(1-r))\otimes O_{p}))L^{'}(r,\chi^{-1})
\end{align*}
where $S_f$ is the subset of finite places in $S$, and $S_p$ is the set of $p$ places of $k$. 
Since $e_{\chi}(H^{0}(F, \mathbb{Q}_{p}/\mathbb{Z}_{p}(1-r))\otimes O_{p})$ is invariant under the action of $\mathrm{Gal}(F/F^{\mathrm{ker}(\chi)})$, we get
\begin{align*}
e_{\chi}(H^{0}(F, \mathbb{Q}_{p}/\mathbb{Z}_{p}(1-r))\otimes O_{p})&\subset (H^{0}(F, \mathbb{Q}_{p}/\mathbb{Z}_{p}(1-r))\otimes O_{p})^{\mathrm{Gal}(F/F^{\mathrm{ker}(\chi)})}\\
&=H^{0}(F^{\mathrm{ker}(\chi)}, \mathbb{Q}_{p}/\mathbb{Z}_{p}(1-r))\otimes O_{p}
\end{align*}
Since the two modules $e_{\chi}(H^{0}(F, \mathbb{Q}_{p}/\mathbb{Z}_{p}(1-r))\otimes O_{p})$ and $H^{0}(F^{\mathrm{ker}(\chi)}, \mathbb{Q}_{p}/\mathbb{Z}_{p}(1-r))\otimes O_{p}$ are finite and $O_p$ is the direct sum of principal rings we get
$$\mathrm{Fitt}_{O_p}(e_{\chi}(H^{0}(F, \mathbb{Q}_{p}/\mathbb{Z}_{p}(1-r))\otimes O_{p}))\mid \mathrm{Fitt}_{O_p}(H^{0}(F^{\mathrm{ker}(\chi)}, \mathbb{Q}_{p}/\mathbb{Z}_{p}(1-r))\otimes O_{p})=w_{1-r}(F^{\mathrm{ker}(\chi)})O_p$$
This means that there exists elements $a$ and $b$ in $O_p$, such that $\mathrm{Fitt}_{O_p}(e_{\chi}(H^{0}(F, \mathbb{Q}_{p}/\mathbb{Z}_{p}(1-r))\otimes O_{p}))=aO_p$, and $w_{1-r}(F^{\mathrm{ker}(\chi)})=ab$.\\
We have then
$$\rho^{r}_{F,p}(b\epsilon)\in \prod_{v\in S_{f}\cup S_p}(1-Nv^{-r}\chi^{-1}(v)).w_{1-r}(F^{\mathrm{ker}(\chi)})L^{'}(r,\chi^{-1})O_p$$
Further, since
$$\prod_{v\in S_p}(1-Nv^{-r}\chi^{-1}(v))\equiv 1\;\mathrm{mod}\;p,$$
the element $\prod_{v\in S_p}(1-Nv^{-r}\chi^{-1}(v))$ is a unit in $O_p$, and if we write $\epsilon^{'}=b\epsilon$, we also have
$$\rho^{r}_{F,p}(\epsilon^{'})\in \prod_{v\in S_{f}}(1-Nv^{-r}\chi^{-1}(v)).w_{1-r}(F^{\mathrm{ker}(\chi)})L^{'}(r,\chi^{-1})O_p.$$
Yet,
$$\prod_{v\in S_{f}}(1-Nv^{-r}\chi^{-1}(v))L^{'}(r,\chi^{-1})=L^{'}_{S}(r, \chi^{-1})$$
Hence
$$\rho^{r}_{F,p}(\epsilon^{'})\in w_{1-r}(F^{\mathrm{ker}(\chi)})L^{'}_{S}(r,\chi^{-1})O_p$$
By Theorem \ref{theorem4.8}, we can choose the element $\epsilon^{'}$, so as to exactly have the following
$$\rho^{r}_{F,p}(\epsilon^{'})= w_{1-r}(F^{\mathrm{ker}(\chi)})L^{'}_{S}(r,\chi^{-1})\mid G \mid$$
Note that, since, in Theorem \ref{theorem4.8}, we made the identification $e_{\chi}O_p=O_p$, the exact statement is
$$\rho^{r}_{F,p}(\epsilon^{'})= w_{1-r}(F^{\mathrm{ker}(\chi)})L^{'}_{S}(r,\chi^{-1})\mid G \mid e_\chi$$
The result ensues.\hspace*{\fill}$\Box$
\end{proof}

\noindent\textbf{Acknowledgement}

\noindent In ($\cite{SaadElBoukhari}$, Theorem 6.3) the second author introduced an element $$Q(0):=\prod_{\ell\mid N, \ell\not = p}(1-\mathrm{Fr}_{\ell}^{-1}\ell^{m-1})$$
where, for a finite abelian extension $F/\mathbb{Q}$ , we denote by $N$ the conductor of $F$ and by $m$ an odd integer such that $m\geq 3$. We write $\mathrm{Fr}_{\ell}$ for the Frobenius automorphism at the prime $\ell$.\\
However, in the latter expression, the eulerian factors $1-\mathrm{Fr}_{\ell}^{-1}\ell^{m-1}$  should be replaced by $(1-\mathrm{Fr}_{\ell}^{-1}\ell^{m-1})e_{I_\ell}\in \mathbb{Z}[G]$. Here $I_\ell$ is the inertia subgroup of $\mathrm{Gal}(F/\mathbb{Q})$ corresponding to the prime $\ell$, $e_{I_\ell}:=(\#I_{\ell})^{-1}\Sigma_{g\in I_{\ell}}g$ and $\mathrm{Fr}_{\ell}$ denotes a representative of the Frobenius map in $G:=\mathrm{Gal}(F/\mathbb{Q})$.\\ This corrects the definition given in \cite{SaadElBoukhari} and has been suggested by the authors in \cite{BurnsDavidSano} who we particularily thank for their observation.

1- J. Assim, Moulay Ismail University of Meknès, Department of Math., B.P. 11201 Zitoune, Meknès, Morocco.
\textit{E-mail:} j.assim@fs.umi.ac.ma\\

2- S. El Boukhari, Moulay Ismail University of Meknès, Department of Math., B.P. 11201 Zitoune, Meknès, Morocco.
\textit{E-mail:} saadelboukhari1234@gmail.com


\begin{thebibliography}{1}
\bibitem{beilinson1}
A. Beilinson
\textit{Polylogarithms and cyclotomic elements}.
Preprint, (1990).

\bibitem{BK}
S. Bloch, K. Kato
\textit{L-functions and Tamagawa numbers of motives}.
Prog. in Math., 86, 333-400, (1990).

\bibitem{Bourbaki}
N. Bourbaki
\textit{Commutative algebra: Chapters 1-7}.
Springer, (1998).


\bibitem{Burns}
D. Burns
\textit{On Leading Terms and Values of Equivariant Motivic L-functions}.
Pure and Applied Math. Quart. 6(1), 83-171, (2010).

\bibitem{BurnsDavidSano}
D. Burns, T. Sano, K. Tsoi
\textit{On higher special elements of $p$-adic representations}.
Preprint, (2018).


\bibitem{Burns-DeJeu-Gangl}
D. Burns, R. de Jeu, H. Gangl
\textit{On special elements in higher algebraic K-theory and the Lichtenbaum-Gross Conjecture}.
Advances in Mathematics, 230: 1502-1529, (2012).

\bibitem{burnsFlach}
D. Burns and M. Flach
\textit{On Galois structure invariants associated to Tate motives}.
Invent. Math. 153: 303-359, (2003).

\bibitem{burnsFlach1}
D. Burns and M. Flach
\textit{Motivic L-functions and Galois module structures}.
Math. Ann. 305: 65-102, (1996).

\bibitem{burnsGreither}
D. Burns and C. Greither
\textit{On the equivariant Tamagawa Number Conjecture for Tate motives}.
Am. J. Math. 120: 1343-1397, (1998).

\bibitem{SaadElBoukhari}
S. El Boukhari
\textit{Fitting ideals of isotypic parts of even K-groups}.
Manuscripta Mathematica, 157(1), 23-49, (2018).

\bibitem{FlachSurvey}
M. Flach, C. Greither
\textit{The equivariant Tamagawa Number Conjecture: A survey}.
DOI: 10.1090/conm/358/06537, (2003).


\bibitem{Fontaine}
J.M. Fontaine
\textit{Valeurs spéciales des fonctions L des motifs}.
Séminaire Bourbaki, exposé 751, Feb. (1992).

\bibitem{Fontaine1}
J.M. Fontaine
\textit{Sur certains types de représentations $p$-adiques du groupe de Galois d'un corps local: construction d'un anneau de Barsotti-Tate}.
Ann. Math. 115: 529-577, (1982).

\bibitem{fperrin-riou}
J.M. Fontaine and B. Perrin-Riou
\textit{Autour des conjectures de Bloch et Kato: cohomologie galoisienne et valeurs de fonctions L}.
Proc. Symp. Pure Math., 55 (1), 599-706, (1994).




\bibitem{Gross1}
B.H. Gross
\textit{On the values of Artin L-functions}.
Preprint, (1980).

\bibitem{GrunenbergAl}
K.W. Gruenberg, J. Ritter, A. Weiss
\textit{A local approach to Chinburg's Root Number Conjecture}.
Proc. Lond. Math. Soc., III. Ser. 79, 47-80, (1999).



\bibitem{Leung}
J. Johnson-Leung
\textit{The equivariant Tamagawa Number Conjecture for abelian extensions of imaginary quadratic fields I}.
Preprint, (2019).

\bibitem{LeungKings}
J. Johnson-Leung, G. Kings
\textit{On the equivariant main conjecture for imaginary quadratic fields}.
J. fur die Reine und Angewandte Mathematik, 10, 1515, (2011).


\bibitem{KM}
F. Knudsen, D. Mumford
\textit{The projectivity of moduli space of stable curves I: Preliminaries on "det" and "div"}.
Math.Scand. 39, 19-55, (1976).

\bibitem{MazurRubin}
B. Mazur, K. Rubin
\textit{Kolyvagin systems}.
Mem. Amer. Math. Soc., 168, (2004).


\bibitem{milne}
J. Milne
\textit{Arithmetic duality theorems}.
BookSurge, second eition, (2006).


\bibitem{nekovar}
J. Nekovar
\textit{Selmer complexes}.
Société Mathématique De France, (2006).



\bibitem{Popescu}
C.D. Popescu
\textit{Integral and $p$-adic Refinements of the Abelian Stark Conjecture}.
IAS/Park City Math. Series, 18, (2009).




\bibitem{voevodsky}
V. Voevodsky
\textit{On motivic cohomology with $\mathbb{Z}/\ell$-coefficients}.
Ann. Math., 174 (2):
no.1, 401-438, (2011).

\bibitem{washington}
L.C. Washington
\textit{Introduction to Cyclotomic Fields}.
Springer, Second edition, (1997).


\end{thebibliography}
\end{document}